\documentclass[10pt,a4paper]{article}
\usepackage{amsmath,amssymb,amsfonts,amsthm}
\usepackage{float,graphicx,color}
\usepackage[all,cmtip]{xy}
\usepackage{tikz}
\usetikzlibrary{matrix}




\newcommand{\rmd}{\mathrm{d}}

\newcommand{\rmC}{\mathrm{C}}

\newcommand{\rmH}{\mathrm{H}}

\newcommand{\rmL}{\mathrm{L}}

\newcommand{\rmP}{\mathrm{P}}

\newcommand{\bbd}{\mathbb{d}}

\newcommand{\bbE}{\mathbb{E}}

\newcommand{\bbK}{\mathbb{K}}

\newcommand{\bbM}{\mathbb{M}}
\newcommand{\bbN}{\mathbb{N}}

\newcommand{\bbR}{\mathbb{R}}
\newcommand{\bbS}{\mathbb{S}}

\newcommand{\bbV}{\mathbb{V}}

\newcommand{\bbZ}{\mathbb{Z}}

\newcommand{\frp}{\mathfrak{p}}

\newcommand{\frK}{\mathfrak{K}}

\newcommand{\calC}{\mathcal{C}}

\newcommand{\calR}{\mathcal{R}}
\newcommand{\calS}{\mathcal{S}}
\newcommand{\calT}{\mathcal{T}}

\newcommand{\calX}{\mathcal{X}}
\newcommand{\calY}{\mathcal{Y}}
\newcommand{\calZ}{\mathcal{Z}}


\newcommand{\transp}{\textsc{t}\,}

\newcommand{\rota}{\textsc{r}}

\newcommand{\id}{\mathrm{id}}

\DeclareMathOperator{\Div}{div}
\renewcommand{\div}{\Div}
\DeclareMathOperator{\rot}{rot}
\DeclareMathOperator{\grad}{grad}
\DeclareMathOperator{\curl}{curl}
\DeclareMathOperator{\defo}{def}
\DeclareMathOperator{\airy}{airy}

\newcommand{\curltcurl}{\curl \transp \curl}
\DeclareMathOperator{\hess}{hess}

\DeclareMathOperator{\sven}{sven}

\DeclareMathOperator{\Skew}{skew}

\DeclareMathOperator{\coker}{coker}

\DeclareMathOperator{\tr}{tr}

\DeclareMathOperator{\End}{End}

\newcommand{\ts}{\textstyle}

\newcommand{\bs}{{\scriptscriptstyle \bullet}}

\newcommand{\diff}{\mathrm{d}}
\newcommand{\rest}{\mathrm{r}}
\newcommand{\eval}{\mathrm{e}}
\newcommand{\trans}{\mathrm{t}}
\newcommand{\curv}{\mathrm{c}}

\DeclareMathOperator{\orient}{\mathrm{o}}

\DeclareMathOperator{\ext}{\mathsf{ext}}

\renewcommand{\diff}{\mathsf{d}}
\renewcommand{\rest}{\mathsf{r}}
\renewcommand{\eval}{\mathsf{e}}
\renewcommand{\trans}{\mathsf{t}}
\renewcommand{\curv}{\mathsf{c}}

\DeclareMathOperator{\poincare}{\frp}
\DeclareMathOperator{\koszul}{\frK}

\newcommand{\subcell}{\unlhd} 

\newcommand{\subcells}{\calS}

\newcommand{\lls}{[ \! [}
\newcommand{\rrs}{] \! ]}

\newcommand{\lsb}{ {\Big [} }
\newcommand{\rsb}{ {\Big ]} }

\newcommand{\poly}{\rmP}

\newcommand{\sym}{\mathrm{sym}}


\newcommand{\beq}{\begin{equation}}
\newcommand{\eeq}{\end{equation}}

\newcommand{\mapping}[4]{
\left\{
\begin{array}{rcl}
\displaystyle #1  &\to& #2\\
\displaystyle #3  &\mapsto & #4
\end{array} \right.
}

\newcommand{\twocases}[4]{
\left\{
\begin{array}{ll}
 #1 \ & \textrm{ if } #2 \\
 #3 \ & \textrm{ if } #4
\end{array}
\right.
}


\definecolor{myyellow}{rgb}{1, 0.85, 0.15}
\definecolor{myorange}{rgb}{0.85, 0.5, 0.15}

\definecolor{mypurple}{rgb}{0.425, 0.15, 0.425}

\definecolor{myred}{rgb}{0.7, 0.15, 0.15}
\definecolor{myblue}{rgb}{0.15, 0.15, 0.7}
\definecolor{mygreen}{rgb}{0.15, 0.7, 0.15}

\definecolor{mybrown}{rgb}{0.55, 0.3, 0.15}
\definecolor{mydarkbrown}{rgb}{0.275, 0.15, 0.075}

\definecolor{mylightgray}{rgb}{0.5,0.5,0.5}
\definecolor{mygray}{rgb}{0.333, 0.333, 0.333}
\definecolor{mydarkgray}{rgb}{0.167,0.167,0.167}








\newcounter{npoint}[section]



\newtheorem{theorem}{Theorem}[section]
\newtheorem{lemma}[theorem]{Lemma}
\newtheorem{corollary}[theorem]{Corollary}
\newtheorem{proposition}[theorem]{Proposition}

\theoremstyle{remark}
\newtheorem{example}{Example}[section]

\theoremstyle{definition}
\newtheorem{remark}{Remark}[section]
\newtheorem{definition}{Definition}[section]

\newcounter{quest}[exercise]

\newcounter{subquest}[quest]



\newcounter{pushlevel}[theorem]

\newcommand{\push}{
\stepcounter{pushlevel}
 \vspace{-1ex} \noindent \hspace{4.5ex} \begin{minipage}[t]{\textwidth*\real{0.99} - 4.5ex}
\mbox{}\hspace{-1ex}\rule[-1.5ex]{.1pt}{1.5ex}\rule{5.3ex}{.1pt}
\vspace{-.65ex}

}

\newcommand{\unpush}{
\end{minipage}
\noindent \mbox{}\hspace{4.2ex}\rule{.1pt}{1.5ex}\rule{5.3ex}{.1pt} \hspace{\stretch{1}}
\noindent
\addtocounter{pushlevel}{-1}

}

\newcounter{theopoint}[theorem]
\newcommand{\tpoint}{
\medskip
\stepcounter{theopoint}
\noindent (\roman{theopoint}) 
}






\title{
Finite Element Systems for vector bundles:\\  elasticity and curvature
}

\author{Snorre H. Christiansen\thanks{Department of Mathematics, University of Oslo, PO Box 1053 Blindern, NO 0316 Oslo, Norway. email: {\tt snorrec@math.uio.no.}}, 
Kaibo Hu\thanks{School of Mathematics,
Vincent Hall, 206 Church St. SE, University of Minnesota, Minneapolis MN 55455-0488, USA. email: {\tt khu@umn.edu.}}}

\date{}

\begin{document}

\maketitle

\begin{abstract}
We develop a theory of Finite Element Systems, for the purpose of discretizing sections of vector bundles, in particular those arizing in the theory of elasticity. In the presence of curvature we prove a discrete Bianchi identity. In the flat case we prove a de Rham theorem on cohomology groups. We check that some known mixed finite elements for the stress-displacement formulation of elasticity fit our framework. We also define, in dimension two, the first conforming finite element spaces of metrics with good linearized curvature, corresponding to strain tensors with Saint-Venant compatibility conditions. Cochains with coefficients in rigid motions are given a key role in relating continuous and discrete elasticity complexes. 
\end{abstract}

\bigskip

\noindent MSC: 65N30, 58A10, 74B05.

\section*{Introduction}
In this paper we first generalize the previously introduced framework of finite element systems (FES) \cite{Chr08M3AS}\cite{ChrHu18} so that it can treat, in particular, elasticity problems, and then provide concrete examples of finite element spaces, some old and some new, that fit the framework.

The general framework provides an approach to finite element discretizations of sections of vectorbundles, and complexes thereof, in particular differential forms with values in a given vector bundle. We make some comments about curvature, but most of the paper concerns the case of flat bundles. For applications in elasticity, the fiber can be identified as the space rigid motions.

In space dimension 2, one can distinguish between two differential complexes related to elasticity, which are formal adjoints of each other and give priority to stresses and strains, respectively. For the stress complex (\ref{eq:elastress}), we can check that the spaces defined in \cite{JohMer78} and \cite{ArnDouGup84} fit the framework. For the strain complex (\ref{eq:elastrain}), we introduce, also within the framework, some new finite element spaces. They model symmetric $2$-tensors (metrics) with a good Saint-Venant operator (linearized curvature).

In the obtained finite element complexes, rigid motion like degrees of freedom play a key role, at every index. The FES framework stresses this design principle, and relates it to the interpretation of elasticity in terms of rigid motion valued fields. 

Defining discrete spaces of metrics with good curvature in dimension 2 should be useful, in view of the importance of curved surfaces in several branches of mathematics, both pure and applied, whatever the distinction is. Such applications will be explored elsewhere. Another motivation for this work was to prepare the way for similar constructions in higher dimensions, especially 3 (with classical elasticity in mind) and 4 (with general relativity in mind). 

\paragraph{Previous work on FES.} 
Until now, the FES framework has been formulated in order to discretize de Rham complexes. It has been used to define finite element complexes of differential forms on polyhedral meshes \cite{Chr08M3AS}, accommodate upwinded finite element complexes containing exponentials \cite{Chr13FoCM}\cite{ChrHalSor14}, give new presentations of known elements \cite{ChrRap16} and to define elements with minimal dimension \cite{ChrGil16} under various constraints (such as containing given polynomials).

The regularity of the differential forms, in the above mentioned works, was $\rmL^2$ with exterior derivative in $\rmL^2$, and the defined finite elements were natural generalizations of,  in particular, the Raviart-Thomas-N\'ed\'elec (RTN) spaces \cite{RavTho77}\cite{Ned80}. The continuity is thus partial, and can be expressed as singlevaluedness of pullbacks to interfaces, corresponding, for vector fields, to continuity in either tangential or normal directions. 

In \cite{ChrHu18} we extended the FES framework so as to be able to impose stronger interelement continuity. For instance, for a conforming discretization of the Stokes equation, one would like to have spaces of fully continuous vector fields, satisfying a commuting diagram with respect to the divergence operator.  For de Rham sequences of higher regularity ($\rmH^1$ and, if desired, exterior derivative in $\rmH^1$), the required continuity can be expressed as singlevaluedness of all components of the differential form and, if desired, of its exterior derivative too, on interfaces. This led us, in \cite{ChrHu18}, to define FE complexes starting with the Clough-Tocher element, which is of class $\rmC^1$, instead of, say, Lagrange elements, which are of class $\rmC^0$. This provided the first conforming polynomial composite Stokes element in dimension 3 (and higher), with piecewise constant divergence and the degrees of freedom of \cite{BerRau85}. The latter seem to be the natural ones for lowest order approximations.

\paragraph{FE, MFE, FEEC, VEM.}
Recall Ciarlet's definition of a finite element (FE), as a space equipped with degrees of freedom \cite{Cia78}. 

For mixed finite element methods (MFE), pairs of finite element spaces that are compatible in the sense of Brezzi \cite{Bre74} should be identified. A particularly convenient tool for this purpose, has been the so-called commuting diagram property, see for instance \cite{RobTho91} page 552 and 570 and compare with \cite{BofBreFor13} \S 8.4 and \S 8.5. It can sometimes be derived from a commutation property of the interpolators associated with the degrees of freedom. In particular, in \cite{Ned86}, finite element $\grad-\curl-\div$ complexes were presented with degrees of freedom providing commuting diagrams. 

Arbitrary order finite element complexes of differential forms were defined in \cite{Hip99}. Whitney forms \cite{Whi57}\cite{Wei52} and the RTN spaces appear as special cases (lowest order -- arbitrary dimension, and arbitrary order -- low dimension, respectively). This connection between numerical methods and differential topology was first pointed out in \cite{Bos88}. Computational electromagnetics has been one of the main motivations \cite{Bos98}\cite{Ned01}\cite{Hip02}. Its interpretation in terms of differential forms is quite clearcut compared with the case for, say, computational fluid dynamics.

Systematically developing the theory of finite elements in terms of differential complexes equipped with commuting projections was advocated in \cite{Arn02}. Relating de Rham complexes to differential complexes appearing in elasticity, and viewing both as special cases of complexes of Hilbert spaces,  has lead to the finite element exterior calculus (FEEC) \cite{ArnFalWin06}\cite{ArnFalWin10}\cite{Arn18}. 

Stability of numerical methods is, in many cases, equivalent to the existence of projections onto the finite element spaces, satisfying commuting diagrams, and having appropriate boundedness or compactness properties \cite{ArnFalWin06}{\cite{ChrWin13IMA}. Uniformly bounded commuting projections can often be obtained from the interpolator associated with degrees of freedom, by a smoothing procedure \cite{Sch08}\cite{Chr07NM}\cite{ArnFalWin06}\cite{ChrWin08}\cite{ChrMunOwr11} (in chronological order of submission).

The FES framework downplays the role of degrees of freedom and stresses that, for a finite element space on a cell, there are implicit finite element spaces on the subcells. The claim is that making these spaces explicit has numerous benefits. For instance, it suggests defining FE spaces recursively, starting with low-dimensional cells, using some extension procedure, such as harmonic extension \cite{Chr08M3AS}. 

The latter technique is also basic to the Virtual Element Method (VEM) \cite{BeiEtAl13}. VEM $\grad-\curl-\div$ complexes are constructed in \cite{BeiEtAl16} ; for an interpretation as a FES of differential forms, valid in arbitrary dimension, see \S 2.1 in \cite{ChrGil16}. Notice that \cite{BeiEtAl16} is based on non-homogeneous harmonic extensions, which produces quite large spaces compared with the minimal ones \cite{ChrGil16}. Harmonic extensions with respect to a modified metric is used in \cite{Chr13FoCM}\cite{ChrHalSor14}. In \cite{ChrHal15} harmonic extensions with respect to a flat metric but a non-zero connection form is used. This illustrates that the FES framework can accomodate FE spaces constructed as solutions of certain PDEs, where it is not necessary to have explicit solution formulas in order to be able to compute with them.

\paragraph{FES principles: restrictions, differentials and cochains.}

In FES, spaces are posited on cells of all dimensions. Interelement continuity is expressed through certain \emph{restriction operators}, from spaces on cells to spaces on subcells. The spaces on the subcells can also be arranged in complexes, for certain \emph{induced differential operators}. The restriction operators and the induced differential operators must satisfy commutation relations to define a \emph{system}. For a given FES, a condition of \emph{compatibility} (Definition \ref{def:compat}), expressed as exactness properties of the restrictions and the induced differentials, ensures the existence of good degrees of freedom and in particular that the harmonic interpolator is well defined and commutes with the differential operators. 

In \cite{Chr08M3AS}, concerning de Rham complexes of low regularity ($\rmL^2$ with exterior derivative in $\rmL^2$), the relevant restriction operators were pullback by inclusion maps, and the induced differential operators were, again, the exterior derivative. In \cite{ChrHu18}, to ensure $\rmH^1$ regularity, the restriction operators could remember all components of the differential forms on the subcell, and possibly of the exterior derivative as well (if it is required to be $\rmH^1$). The induced differential operators now acted on all this information. Thus appeared some new vectorbundles on subcells, linked by differential operators that were not exactly the exterior derivative on the subcell: they retain additional information about the ambient cell. We therefore, for the framework, considered general complexes of spaces, not just complexes of differential forms.

General degrees of freedom are not essential in FES, but they are certainly accomodated and sometimes very convenient. On the other hand certain degrees of freedom are paramount for the development of the theory. For de Rham complexes these degrees of freedom are the integration of $k$-forms on the $k$-dimensional cells of the mesh. This gives rise to the de Rham map, which maps from differential forms to (real valued) cellular cochains ; it commutes with the differentials.

For elasticity complexes, we contend that \emph{cellular cochains with coefficients in rigid motions} are the right analogue. More precisely we introduce, for each cell of each dimension, a space which is naturally isomorphic to the space of rigid motions. Cochains with coefficients in these spaces form a complex.  A generalized \emph{de Rham map} from elasticity fields to such cochains with coefficients, is then defined and shown to commute. 

\paragraph{Finite element elasticity complexes.}
In dimension 2, for elasticity problems, the stress complex is implicitly used in \cite{JohMer78} and made explicit in \cite{ArnDouGup84}. Since then, many more discrete stress complexes have been defined, both conforming \cite{ArnWin02} and non-conforming \cite{ArnWin03}\cite{ArnFalWin07}. See \cite{HuZha15} and the references therein for more examples. Notice that the stress complexes in \cite{JohMer78} and \cite{ArnDouGup84} are composite and start with Clough-Tocher elements, whereas those in \cite{ArnWin02} are polynomial and start with Argyris elements. 

For the systematic design of discrete elasticity complexes, a link between de Rham complexes and elasticity complexes, known as the BGG construction \cite{Eas99}, has been developed \cite{ArnFalWin06IMA1}\cite{ArnFalWin06IMA2}. In \cite{ArnFalWin06IMA2}, known finite element de Rham complexes were tensorized with vectors in order to get vector valued de Rham complexes. Under a surjectivity condition (see page 58), the diagram chase then yielded new spaces for the elasticity complexes.

The finite element complexes defined here behave naturally with respect to the BGG diagram chase. That is, we can define finite element spaces for some interlinked vector valued de Rham complexes, such that the diagram chase at the discrete level works exactly as at the continuous level: isomorphisms at the continuous level correspond to isomorphisms at the  discrete level. One thus needs a large supply of discrete de Rham sequences, corresponding to different regularities, that match at different indices. While this can be dispensed of in the presentation of our elasticity elements, it was an important guiding principle towards their design and we have included remarks to this effect. 

Finite element de Rham complexes of higher regularity have been constructed \cite{FalNei13}\cite{Nei15}. See also \cite{GuzNei14MC}\cite{GuzNei14IMA} for related Stokes elements. A motivation behind \cite{ChrHu18}\cite{ChrHuHu18} was to have enough such sequences to address elasticity through diagram chasing.

For elasticity complexes and related BGG diagram chases, the case of $\rmC^\infty$ regularity is well established in the literature, but the choice of Sobolev spaces is often not explicit. We introduce several Sobolev spaces for our complexes, many of which are not simple tensor products. There are several possible choices for each smooth complex. To obtain the stress complex one can do the chase in (\ref{eq:chasestress})  or (\ref{eq:chasestressbis}). In the latter, the regularity is expressed with a  differential operator that acts on columns, whereas the differentials of the complex act by rows. For the strain complex we study two different regularities, corresponding to two different regularities in the diagram chase. Here also the regularity is expressed in terms of differential operators acting on columns as well as rows. A rationale behind our choice of Sobolev spaces is given in Remark \ref{rem:sobchoice}.

Another tool we have developed for the purposes of constructing elasticity elements are  Poincar\'e - Koszul operators for elasticity complexes \cite{ChrHuSan18}. The Cesaro - Volterra path integral is but one example.

\paragraph{Numerical methods for curvature problems.}

For the general framework, the main novelty here, compared with \cite{ChrHu18}, is that we introduce some generalizations of the de Rham maps. We are interested in discretizing sections of vector bundles. These are equipped with a connection. For applications in (linear) elasticity this connection is flat. We have implicitly linearized around the Euclidean metric, for which the Levi - Civita connection is flat, as well as other associated connections. But, for the definition of discrete vector bundles, we have also been mindful of situations where non-zero curvature is centre stage, and inspired by numerical methods developed for such problems:

\begin{description}
\item[Regge Calculus] (RC) \cite{Reg61}, a discrete approach to general relavivity, can be interpreted in a finite element context \cite{Chr04M3AS}\cite{Chr11NM}\cite{Chr15} and extended to higher orders \cite{Li18}. One then obtains, in dimension 2, strain complexes of low regularity: they end with discrete spaces containing measures, typically Dirac deltas at vertices. Here, on the contrary, the finite element fields are at least square integrable throughout the complexes. Notice also that, even though the Regge strain complexes are equipped with commuting projections, the degrees of freedom do not contain rigid motions. They are nevertheless natural: Regge metrics are determined by edge lengths (squared), at lowest order. 

A discrete Gauss-Bonnet theorem is valid for RC, as can be proved combinatorially, or by a smoothing technique as introduced in \cite{Chr15}. For the elements presented here, Gaussian curvature (linearized or not) is well defined by classical formulas, so that the Gauss-Bonnet theorem is immediate. While the regularity of Regge elements seems adapted to general relativity theory, higher regularity, as achieved here, could be important to other PDEs in Riemannian geometry, such as those treated in \cite{Aub98}\cite{ChoLuNi06}\cite{Cia13}. 

\item[Lattice Gauge Theory] (LGT) \cite{Wil74}, as extended to a finite element context \cite{ChrHal12JMP}, was also at the back of our minds during this work. In LGT one defines discrete connections and curvature, as well as a discrete Yang-Mills functional, but it is less clear what the discrete covariant exterior derivative and Bianchi identity should be. By contrast, for the discrete theory we develop here, both of these are explicit: the former is to some extent the basic building block, and the latter is obtained in Theorem \ref{theo:bianchi}.

For the flat case, for which we introduce the FES framework, we prove a variant of the de Rham theorem (e.g. \cite{Pra07} \S V.3.): the de Rham map induces isomorphisms on cohomology groups, from the space of gobal sections to the cochains with coefficients, see Theorem \ref{theo:derham}.

\item[Exponential fitting.] A role for covariant exterior derivatives in a FES context was also indicated in \cite{Chr13FoCM}, defining upwinded complexes of differential forms, generalizing exponentially fitted methods. Such constructions have applications to PDEs describing for instance convection diffusion problems or band gaps in photonic cristals. See Remark \ref{rem:convdiff} for further details. 
\end{description}

\paragraph{Sheaves,  differential geometry and formal theory of PDEs.}
The FES framework can be interpreted as a discrete sheaf theory (see Remark \ref{rem:presh}). For our purposes, sheaf theory can be summarized as a framework for gluing fields that are defined piecewise. This is done in Sobolev spaces with the help of, in particular, partitions of unity, and in finite element spaces, by imposing various forms of continuity through interfaces in a mesh. Sheaves help express this analogy. 

Discrete sheaf theory for various applications is also developed in \cite{Ghr14}\cite{Cur14}. Sheaf theory is usually expressed in the language of categories, which also originates in algebraic topology. It has been suggested that this language, sometimes derided  as ''abstract nonsense'' (following Steenrod, with more or less affection), could be useful not only for other branches of mathematics \cite{KasSch06}, but for sciences in general \cite{Spi14}.

What we refer to here as elasticity complexes are sometimes called Calabi complexes in differential geometry, whereas rigid motions correspond to Killing fields. Sheaf theoretic approaches to such, with applications to linearized gravity, have been considered, see \cite{Kha17}. 

Our discrete Bianchi identity, being combinatorially quite elementary, has precedents, of course. In particular similar identities can be found in synthetic differential geometry \cite{Koc96}. This axiomatic approach to geometry grew out of topos theory \cite{LanMoe94}\cite{KasSch06}, introduced by Grothendieck to provide the ideal ''double bed'' for the espousals of continuous and discrete theories. It remains to see if it will appeal to numerical analysis. 

There is a formal theory of partial differential equations, couched in homological algebra and developed by, in particular, Spencer \cite{Spe69}. See e.g. \cite{Pom94}\cite{Sei10}. Perhaps our paper can hint towards a corresponding theory for finite element spaces. For instance: could there be master finite elements for the Spencer sequences, from which all specific examples could be deduced by diagram chases?

Though not necessary in order to express the end product, namely new finite element spaces, the above mentioned philosophies (more specifically: homological approaches to differential operators and categorical approaches to geometry), were inspiring to us for developing them, even from a distance.

\paragraph{Outline.} The paper is organized as follows. In \S \ref{sec:flatvb} we develop a theory for discrete flat vector bundles. Results concerning vector bundles with curvature are relegated to \S \ref{sec:curvature}. In \S \ref{sec:fes} we detail the framework of finite element systems, for a given discrete flat vector bundle. In \S \ref{sec:elasticity} we provide background on elasticity, including relevant differential operators, differential complexes, the BGG diagram chase and Poincar\'e operators. In \S \ref{sec:fesstress} we detail FES for the stress complex, detailing the main example of the Johnson-Mercier element. In \S \ref{sec:fesstrain} we detail FES for the strain complex, with two different regularities, providing the new examples of finite elements for strain tensors (metrics) with compatible Saint Venant operator (linearized curvature).

\section{Discrete flat vector bundles \label{sec:flatvb}}

\subsection{Cellular complexes and cochains}
Let $\calT$ be a cellular complex. If $T, T'$ are cells in $\calT$ we write $T' \subcell T$ to signify that $T'$ is a subcell of $T$. Each cell of dimension at least one is supposed oriented. Given two cells $T$ and $T'$ in $\calT$, their relative orientation is denoted $\orient(T,T')$. It is $0$ unless $T'$ is a codimension one subcell of $T$, in which case it is $\pm 1$. The subset of $\calT$ consisting of $k$-dimensional cells is denoted $\calT^k$. The space of $k$-cochains is denoted $\calC^k(\calT)$ and consists of the maps from $\calT^k$ to $\bbR$. In other words a $k$-cochain assigns a real number to each cell of dimension $k$. Notice that $\calC^k(\calT)$ has a canonical basis indexed by $\calT^k$.

The cellular cochain complex is denoted $\calC^\bs(\calT)$. Its differential, also called the coboundary map, is denoted $\delta: \calC^k(\calT) \to \calC^{k+1}(\calT)$. Its matrix in the canonical basis is given by relative orientations.

All complexes considered in this paper are cochain complexes, in the sense that the differential increases the index.

\subsection{Discrete vectorbundles with connection}

\begin{definition}[Discrete vectorbundle with connection] \label{def:buncon} \mbox{}
\begin{itemize}
\item For each $T \in \calT$ we suppose that we have a vectorspace $L(T)$. We call this a \emph{discrete vectorbundle}. We call $L(T)$ the fiber of $L$ at $T$. 

\item Moreover, when $T'$ is a codimension 1 face of $T$, we suppose that we have an isomorphism $\trans_{TT'}: L(T') \to L(T)$, called the transport map from $T'$ to $T$. We call this a \emph{discrete connection}.
\end{itemize}  
\end{definition}

\begin{remark}[comparison with Lattice Gauge Theory] This setup is at variance with the choices made in Lattice Gauge Theory (LGT). LGT was initially defined for cubical complexes \cite{Wil74}. An analogue for simplicical complexes was developed in \cite{ChrHal12JMP}. There, a discrete vector bundle corresponds to a choice of vector space attached to vertices only, whereas we here associate a vectorspace to each cell, of every dimension, in $\calT$. Moreover, in LGT, a discrete connection is defined only on edges, as a choice of isomorphim between the vectorspaces attached to its two vertices ; here the discrete connection has many more variables. 
\end{remark}

\begin{definition}[Flatness of discrete connections] \label{def:flat}
Whenever $T''$ is a codim-2 face of $T$, if we let $T_0'$ and $T_1'$ be the two codimension 1 faces of $T$ which have  $T''$ as a common codimension 1 face, we require that the following diagram commutes:
\begin{equation}\label{eq:com}
\xymatrix{
L(T'') \ar[r]^{\trans_{T_1'T''}} \ar[d]^{\trans_{T'_0T''}} & L(T_1') \ar[d]^{\trans_{TT'_1}}\\
L(T_0') \ar[r]^{\trans_{TT'_0}} &  L(T)
}
\end{equation}
Or, if one prefers:
\begin{equation}
\trans_{TT'_0}\trans_{T'_0T''} = \trans_{TT'_1}\trans_{T_1'T''}.
\end{equation}
For reasons that will appear later we say that a discrete connection having this property is \emph{flat}.
\end{definition}

For instance we can choose a fixed vector space $V$ and let $\trans_{TT'} = \id_V$. This defines a discrete vectorbundle with a flat discrete connection. Discrete vectorbundles of this form, for a choice of vectorspace $V$, will be called trivial discrete vectorbundles. We may speak of the trivial discrete vectorbundle modelled on $V$, to make the choice of $V$ explicit.

In this setting one defines a cochain complex with coefficients in $L$, denoted $\calC^\bs(\calT, L)$, as follows:

\begin{definition}[Cochains with coefficients]\mbox{}
\begin{itemize}
\item The space $\calC^k(\calT, L)$ is nothing but $\Pi_{T \in \calT^k} L(T)$, whose elements will be families $(u(T))_{T \in \calT^k}$ such that for each $T \in \calT^k$, $u(T) \in L(T)$. Such a $u$ will be called a $k$-cochain with coefficients in $L$.

\item The  differential $\delta_\trans^k: \calC^k(\calT, L) \to \calC^{k+1}(\calT, L) $ is defined by: 
\begin{equation}\label{eq:discovextder}
(\delta_\trans^k u)(T) = \sum_{T' \subcell T} \orient(T,T') \trans_{TT'} u(T').
\end{equation}
The operator $\delta_\trans^\bs$ will be called the \emph{discrete covariant exterior derivative}.
\end{itemize}
\end{definition}

We notice that we do indeed have a complex:
\begin{lemma}\label{lem:dtdt0} The operators $\delta_\trans^\bs$ on $\calC^\bs(\calT, L)$, satisfy:
\begin{equation}
\delta_\trans^{k+1}\delta_\trans^k = 0.
\end{equation}
\end{lemma}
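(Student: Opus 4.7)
The plan is to compute $(\delta_\trans^{k+1}\delta_\trans^k u)(T)$ for $T \in \calT^{k+2}$ by unfolding the definition twice and regrouping the resulting double sum by codimension-two faces. For each such face $T'' \subcell T$, exactly two codimension-one faces $T'_0, T'_1$ of $T$ satisfy $T'' \subcell T'_0, T'' \subcell T'_1$ (this is the combinatorial hypothesis built into the notion of a cellular complex used in Definition \ref{def:flat}). Thus the contribution of $u(T'')$ to $(\delta_\trans^{k+1}\delta_\trans^k u)(T)$ is
\begin{equation*}
\bigl( \orient(T,T'_0)\orient(T'_0,T'') \trans_{TT'_0}\trans_{T'_0 T''} + \orient(T,T'_1)\orient(T'_1,T'') \trans_{TT'_1}\trans_{T'_1 T''} \bigr) u(T'').
\end{equation*}

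The key step is then to invoke the flatness hypothesis (Definition \ref{def:flat}), which tells us that $\trans_{TT'_0}\trans_{T'_0 T''} = \trans_{TT'_1}\trans_{T'_1 T''}$. Factoring out this common isomorphism from $L(T'')$ to $L(T)$, the contribution of $u(T'')$ reduces to
\begin{equation*}
\bigl( \orient(T,T'_0)\orient(T'_0,T'') + \orient(T,T'_1)\orient(T'_1,T'') \bigr) \trans_{TT'_0}\trans_{T'_0 T''}\, u(T'').
\end{equation*}
The scalar prefactor vanishes: this is exactly the identity $\delta^2 = 0$ for the ordinary cellular cochain complex $\calC^\bs(\calT)$, which, as recalled in \S 1.1, has matrix given by the relative orientations. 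Equivalently, it expresses the usual incidence number cancellation for adjacent codimension-one faces sharing a common codimension-two face.

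Summing over all $T'' \subcell^{\,2} T$ then gives $(\delta_\trans^{k+1}\delta_\trans^k u)(T) = 0$ for every $T \in \calT^{k+2}$, whence the claim. The only nontrivial input beyond bookkeeping is the flatness condition, which is designed precisely so that the scalar-valued $\delta^2 = 0$ identity lifts to the bundle-valued setting; I do not expect any genuine obstacle beyond making the two-fold summation and its reindexing precise.
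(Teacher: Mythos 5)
Your proof is correct and is exactly the argument the paper intends: its own proof merely says the result is ``a direct consequence of (\ref{eq:com}), given what we know about relative orientations,'' and your regrouping by codimension-two faces, factoring out the common transport map via flatness, and invoking the incidence-number cancellation is precisely the expansion of that one-line remark. No gap.
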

\begin{proof}
This is a direct consequence of (\ref{eq:com}), given what we know about relative orientations.
\end{proof}

This definition can be used in particular when a vector space $V$ has been chosen and  we let $L(T) = V$ for all $T \in \calT$ and $\trans_{TT'} = \id_V$ for all $T, T'$. This cochain complex will be denoted $\calC^\bs(\calT, V)$. With this notation we have in particular that $\calC^\bs(\calT, \bbR) = \calC^\bs(\calT)$, the standard cellular cochain complex introduced previously.

\begin{remark}[on invertibility of transport operators]
In Definition \ref{def:buncon} we could allow $\trans_{TT'} : L(T') \to L(T)$ to be just a morphism (not necessarily an isomorphism), and still get at complex $\calC^\bs(\calT, L)$ from (\ref{eq:discovextder}). However, we have in mind situations where the transport operators $\trans_{TT'}$ mimick the parallel transport associated with a connection on a vectorbundle, and these are isomorphisms. Compare with \cite{GelMan03} \S I.4.7.
\end{remark}

\subsection{Transport along some paths within a cell}
We notice that when the discrete connection is flat, transport along paths within a given cell just depends on the endpoints. We will use the following more precise statement:
\begin{lemma}\label{lem:pathindep} 
Suppose $T, T' \in \calT$, and that $T'$ is a codimension-$k$ face of $T$, for some $k \geq 2$. Then  all sequences $T' \subcell T_0 \subcell \cdots \subcell T_{k-1} \subcell T$, where each term in this sequence is a codimension-$1$ subcell of the next, give the same map $\trans_{TT_{k-1}} \cdots \trans_{T_0 T'}$ from  $L(T')$ to $L(T)$. This map will be denoted $\trans_{T T'}$.
\end{lemma}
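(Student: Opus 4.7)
The natural strategy is induction on the codimension $k$, with Definition \ref{def:flat} providing the base case $k=2$. The inductive step then reduces to a combinatorial claim about maximal chains in the face poset, combined with repeated use of flatness.

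For the inductive step, I would proceed as follows. Let $T' \subcell T_0 \subcell \cdots \subcell T_{k-1} \subcell T$ and $T' \subcell S_0 \subcell \cdots \subcell S_{k-1} \subcell T$ be two admissible chains, and write
\[
\Phi(T_\bullet) := \trans_{TT_{k-1}} \trans_{T_{k-1}T_{k-2}} \cdots \trans_{T_0 T'},
\]
and similarly $\Phi(S_\bullet)$. I want to show $\Phi(T_\bullet) = \Phi(S_\bullet)$. First, in the easy case $T_{k-1} = S_{k-1}$, the induction hypothesis applied to the codimension-$(k-1)$ pair $(T_{k-1}, T')$ gives $\trans_{T_{k-1}T_{k-2}} \cdots \trans_{T_0 T'} = \trans_{T_{k-1}S_{k-2}} \cdots \trans_{S_0 T'}$; post-composing with $\trans_{TT_{k-1}}$ finishes this subcase. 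Symmetrically, if $T_0 = S_0$ one reduces via the codimension-$(k-1)$ pair $(T, T_0)$. More generally, if the two chains coincide at some intermediate cell, one splits and applies the inductive hypothesis on each half.

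The remaining case is when the two chains differ at every level. Here the plan is to reduce to a sequence of elementary swaps, each of which is an instance of Definition \ref{def:flat}. Concretely, I would show that any two maximal chains in the interval $[T',T]$ of the face poset can be connected by a finite sequence of moves that replace a single cell $T_j$ in a chain by another cell $S_j$ of the same dimension, with $T_{j-1} = S_{j-1}$ and $T_{j+1} = S_{j+1}$ (where $T_{-1}=T'$ and $T_k = T$). For such an elementary swap, $T_j$ and $S_j$ are precisely the two codimension-$1$ faces of $T_{j+1}$ having $T_{j-1}$ as a common codimension-$1$ face, so the commutative square (\ref{eq:com}) applied to $T_{j+1}$ and its codimension-$2$ face $T_{j-1}$ yields
\[
\trans_{T_{j+1}T_j}\trans_{T_jT_{j-1}} = \trans_{T_{j+1}S_j}\trans_{S_jT_{j-1}},
\]
and inserting this equality into $\Phi(T_\bullet)$ turns it into $\Phi(S_\bullet)$. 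Iterating over the elementary swaps gives the result.

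The main obstacle is the combinatorial fact that any two maximal chains in the rank-$k$ interval $[T',T]$ are connected by such elementary swaps. For the cellular complexes of interest here (simplicial, cubical, or more generally those whose face intervals are complemented ranked lattices), this is standard: one proceeds by induction on $k$, choosing the first level $j$ at which the two chains disagree and using the existence, inside $T_{j+1}$, of a codimension-$2$ face containing $T_{j-1}$ that is shared by $T_j$ and some intermediate $S'_j$, then iterating. I would state the required combinatorial hypothesis on $\calT$ explicitly (or inherit it from the running assumptions on cellular complexes) and then invoke it. Once that connectivity of maximal chains is granted, the above argument shows that $\Phi$ is independent of the chain, and we may unambiguously denote this common value by $\trans_{TT'}$.
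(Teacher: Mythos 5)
Your plan is correct in structure, and it is worth noting that the paper offers no proof of Lemma \ref{lem:pathindep} at all: the authors treat it as an immediate consequence of Definition \ref{def:flat}. What you supply is therefore the argument the paper leaves implicit, and your main contribution is to isolate the one piece of genuine content, namely that any two maximal chains in the face-poset interval $[T',T]$ are connected by elementary swaps, each swap being exactly an instance of the commuting square (\ref{eq:com}). Two remarks. First, be a little careful with the connectivity step: for a general polytopal cell the two top faces $T_{k-1}$ and $S_{k-1}$ containing $T'$ need not share a codimension-$2$ face of $T$ containing $T'$ (e.g.\ two facets of an octahedron meeting only at a vertex), so a single diamond move at the top level does not always suffice and one really does need the full flag-connectivity statement (or an induction that walks around the vertex figure); your phrasing ``then iterating'' covers this, but the hypothesis you propose to state explicitly on $\calT$ should be flag connectivity of intervals together with the diamond property (each rank-$2$ interval has exactly two intermediate cells), the latter being already implicit in Definition \ref{def:flat}. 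Second, a cosmetic point inherited from the paper rather than from you: the chain $T' \subcell T_0 \subcell \cdots \subcell T_{k-1} \subcell T$ as written has one cell too many for codimension $k$; your argument is insensitive to this off-by-one, but a clean write-up should fix the indexing.
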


We could therefore make the following alternative description of a discrete vectorbundle with connection:

\begin{definition}
[equivalent definition of flat discrete vectorbundles]
For each $T \in \calT$ we suppose that we have, as before, a vectorspace $L(T)$. Moreover, when $T' \subcell T$, we suppose we have an isomorphim $\trans_{TT'}: L(T') \to L(T)$. We require that $\trans_{TT} = \id_{L(T)}$ and also that whenever $T'' \subcell T' \subcell T$ we have $\trans_{T T''} = \trans_{TT'}\trans_{T'T''}$. 
\end{definition}

\subsection{Discrete gauge transformations \label{sec:gauge}}

Suppose we have, for each $T\in \calT$, two choices of vectorspaces denoted $L(T)$ and $L'(T)$. For $T'$ a codim-1 face of $T$ we suppose that we have transport maps $\trans_{TT'} : L(T') \to L(T)$ as well as $\trans_{TT'}' : L'(T') \to L'(T)$. Under these circumstances we define an isomorphism from $(L, \trans)$ to $(L', \trans')$ to be a family of isomorphims $\theta_T : L(T) \to L'(T)$, one for each $T\in \calT$, such that:
\begin{equation}\label{eq:thetacom}
\theta_T \trans_{TT'} = \trans_{TT'}' \theta_{T'}.
\end{equation}
\begin{lemma}
Under the above circumstances $\theta$ induces an isomorphism of complexes $\calC^\bs(\calT, L) \to \calC^\bs(\calT, L') $, defined simply by:
\begin{equation}
\theta : (u(T))_{T\in \calT^k} \mapsto (\theta_T u(T))_{T\in \calT^k}.
\end{equation}
\end{lemma}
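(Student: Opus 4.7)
The plan is very short: the statement decomposes into (a) well-definedness and bijectivity at each degree, and (b) the chain map property. Both follow almost immediately from the hypotheses.

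First I would observe that the formula $\theta : (u(T))_{T\in \calT^k} \mapsto (\theta_T u(T))_{T\in \calT^k}$ is manifestly well-defined as a map $\calC^k(\calT, L) \to \calC^k(\calT, L')$, since each $\theta_T u(T)$ lies in $L'(T)$. Because each $\theta_T$ is an isomorphism of vector spaces, the degree-$k$ component of $\theta$ is an isomorphism of vector spaces whose inverse is given pointwise by $\theta_T^{-1}$. So the only real content is that $\theta$ intertwines $\delta_\trans^k$ and $\delta_{\trans'}^k$.

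For the chain map property I would fix $u \in \calC^k(\calT, L)$ and $T \in \calT^{k+1}$ and simply compute, using the definition (\ref{eq:discovextder}) of the discrete covariant exterior derivative and the intertwining relation (\ref{eq:thetacom}):
\begin{align*}
(\theta \delta_\trans^k u)(T) &= \theta_T \sum_{T' \subcell T} \orient(T,T') \trans_{TT'} u(T') \\
&= \sum_{T' \subcell T} \orient(T,T') \, \theta_T \trans_{TT'} u(T') \\
&= \sum_{T' \subcell T} \orient(T,T') \, \trans_{TT'}' \theta_{T'} u(T') \\
&= (\delta_{\trans'}^k \theta u)(T),
\end{align*}
where the sum ranges over codimension-one faces $T'$ of $T$ (the only ones for which $\orient(T,T')$ is nonzero), which is precisely the range on which (\ref{eq:thetacom}) is assumed to hold.

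I do not expect any genuine obstacle: the linearity of each $\theta_T$ lets us pull it past the finite sum, and the intertwining hypothesis is exactly what is needed to swap $\theta_T \trans_{TT'}$ for $\trans_{TT'}' \theta_{T'}$. Combined with the pointwise invertibility, this gives an isomorphism of cochain complexes.
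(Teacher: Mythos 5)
Your proof is correct and follows essentially the same route as the paper's: bijectivity is handled pointwise via the invertibility of each $\theta_T$, and the chain-map property is the same one-line computation combining linearity of $\theta_T$ with the intertwining relation (\ref{eq:thetacom}), merely written in the opposite direction. No gaps.
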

\begin{proof}
Bijectivity is obvious. We prove that $\theta$ is a cochain morphism (i.e. commutes with the differentials). We let $\delta_{\trans'}^\bs$ be the cochain-map of $\calC^\bs(\calT, L')$, while $\delta_\trans^\bs$ denotes that of $\calC^\bs(\calT,L)$. We have, for $u \in \calC^k(\calT, L)$:
\begin{align}
(\delta_{\trans'}^{ k} \theta u)(T) & = \sum_{T'\subcell T} \orient(T,T') \trans_{TT'}' \theta_{T'} u(T'),\\
& = \sum_{T'\subcell T} \orient(T,T') \theta_T \trans_{TT'} u(T'),\\
& = (\theta \delta_\trans^k u) (T),
\end{align}
as required.
\end{proof}
A family of isomorphisms $\theta$ as above may also be referred to as a \emph{discrete gauge transformation}. This terminology is used in particular when we have one discrete vectorbundle $L$, but with two different choices of discrete connections $\trans$ and $\trans'$; in this case $\theta_T$ will be an automorphism of $L(T)$, for each $T$.

\begin{lemma}\label{lem:cohlt}
For each cell $T$ in $\calT$, the complex $\calC^\bs(\subcells (T), L)$ (with coefficients) is isomorphic to $\calC^\bs(\subcells (T), L(T))$ (with constant fiber). Explicitely, for each subcell $S \subcell T$ we let $\theta_S : L(S) \to L(T)$ be the map $\trans_{TS}$. Then $\theta$ gives a gauge transformation, from $(L, \trans)$ to $(L(T), \id)$.

\end{lemma}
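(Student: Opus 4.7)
The plan is to verify directly that the proposed family $\theta_S = \trans_{TS}$, indexed by subcells $S \subcell T$, satisfies the definition of a gauge transformation from Section \ref{sec:gauge}, with target the trivial connection on the constant fiber $L(T)$. Once that is established, the isomorphism of complexes $\calC^\bs(\subcells(T), L) \to \calC^\bs(\subcells(T), L(T))$ is immediate from the preceding lemma showing that any gauge transformation induces a cochain isomorphism.

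First, each $\theta_S = \trans_{TS}$ is an isomorphism. Indeed, by Lemma \ref{lem:pathindep}, $\trans_{TS}$ can be realized as a composition of the codimension-one transport maps appearing in some chain $S \subcell T_0 \subcell \cdots \subcell T_{k-1} \subcell T$. Each of these is an isomorphism by Definition \ref{def:buncon}, so $\theta_S$ is itself an isomorphism.

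The substantive step is the commutation relation (\ref{eq:thetacom}). Take $S' \subcell S$ with $S'$ a codimension-one face of $S$, both subcells of $T$. With $\trans'_{SS'} = \id_{L(T)}$, the condition reduces to
\begin{equation*}
\trans_{TS}\, \trans_{SS'} \;=\; \trans_{TS'}.
\end{equation*}
This is precisely what Lemma \ref{lem:pathindep} supplies: since $S'$ is a codimension-$(k{+}1)$ face of $T$ while $S$ is codimension-$k$, every chain of codimension-one inclusions from $S'$ to $T$ yields the same map $\trans_{TS'}$. In particular, one may prepend $\trans_{SS'}$ to any chain realizing $\trans_{TS}$ and obtain $\trans_{TS'}$.

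I do not anticipate any real obstacle beyond this bookkeeping; the only non-trivial ingredient is path-independence, which in turn rests on the flatness axiom (\ref{eq:com}). Applying the gauge-transformation lemma of Section \ref{sec:gauge} to this $\theta$ then produces the desired isomorphism of cochain complexes.
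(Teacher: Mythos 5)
Your proof is correct and follows essentially the same route as the paper's: reduce the claim to the commutation relation $\trans_{TS}\trans_{SS'}=\trans_{TS'}$ and deduce it from the path-independence of Lemma \ref{lem:pathindep}, then invoke the gauge-transformation lemma of \S\ref{sec:gauge}. The only addition is your explicit check that each $\theta_S$ is an isomorphism, which the paper leaves implicit.
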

\begin{proof}
The requirement is that the following commutation relation holds for subcells $S, S'$ of $T$ such that $S' \subcell S$:
\begin{equation}
\theta_S \trans_{SS'} = \trans_{SS'}' \theta_{S'}.
\end{equation}
This can also be written:
\begin{equation}
\trans_{TS} \trans_{SS'} = \id_{L(T)} \trans_{TS'}.
\end{equation}
This identity holds according to Lemma \ref{lem:pathindep}.
\end{proof}

\begin{corollary}\label{cor:cohlt}
The sequence $\calC^\bs(\subcells (T), L)$ is exact, except at index $0$, where the kernel is isomorphic to $L(T)$.
\end{corollary}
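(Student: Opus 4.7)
The plan is to reduce the statement to a standard fact about the cohomology of a closed cell. By Lemma \ref{lem:cohlt}, the gauge transformation $\theta_S = \trans_{TS}$ yields an isomorphism of complexes
\begin{equation}
\calC^\bs(\subcells(T), L) \;\cong\; \calC^\bs(\subcells(T), L(T)),
\end{equation}
where the right-hand side is the cellular cochain complex of $\subcells(T)$ with constant fiber $L(T)$ and trivial transport. Since cohomology is preserved under isomorphism of complexes, it suffices to establish the claim for the constant-coefficient complex on the right.

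Next, I would observe that $\calC^\bs(\subcells(T), L(T))$ is canonically identified with the ordinary cellular cochain complex $\calC^\bs(\subcells(T)) \otimes L(T)$. Taking cohomology commutes with tensoring by the vector space $L(T)$, so one is reduced to showing that $\calC^\bs(\subcells(T))$ has cohomology $\bbR$ in degree $0$ and is exact in positive degrees. The key point is that $\subcells(T)$, viewed as a cellular complex, is the closed cell $T$ with its canonical decomposition into faces; topologically this is a closed ball (hence contractible), and cellular cohomology agrees with singular cohomology, which is $\bbR$ concentrated in degree $0$.

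Combining the two steps, the sequence $\calC^\bs(\subcells(T), L)$ is exact at every positive index, and at index $0$ its kernel (which equals $H^0$, since there are no cochains in negative degree) is isomorphic to $L(T) \otimes \bbR = L(T)$. To make the identification of the kernel explicit one can note that a $0$-cochain $u$ is closed iff $\trans_{SS'} u(S') = u(S)$ whenever $S' \subcell S$ in $\subcells(T)$, which by Lemma \ref{lem:pathindep} is equivalent to $u(S) = \trans_{ST} v$ for a single $v \in L(T)$; the assignment $u \mapsto v$ gives the isomorphism $\ker \delta_\trans^0 \cong L(T)$.

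The only step that needs care is the appeal to the contractibility of the closed cell. If one wishes to avoid invoking the topological statement directly, one can instead give a combinatorial proof by exhibiting an explicit contracting cochain homotopy on $\calC^\bs(\subcells(T))$ based on a choice of interior vertex or, more generally, a cone construction onto $T$ itself; but invoking the standard fact that the cellular cohomology of a ball is trivial in positive degrees seems the cleanest route.
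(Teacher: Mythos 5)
Your proof is correct and follows essentially the same route the paper intends: the corollary is stated as an immediate consequence of Lemma \ref{lem:cohlt}, which gauge-transforms $\calC^\bs(\subcells(T), L)$ into the constant-coefficient complex $\calC^\bs(\subcells(T), L(T)) \cong \calC^\bs(\subcells(T)) \otimes L(T)$, whose exactness in positive degrees and kernel $L(T)$ at index $0$ is the standard acyclicity of the cochain complex of a closed cell. One small notational point: the transport maps go from subcells to supercells, so the closed $0$-cochains are those of the form $u(V) = \trans_{TV}^{-1}v$ for a fixed $v \in L(T)$ (equivalently, $\trans_{TV}u(V)$ is independent of the vertex $V$), rather than $u(V) = \trans_{VT}v$.
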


The preceding corrollary is local, in that it concerns a single cell $T \in \calT$. The cohomology of the global space $\calC^\bs(\calT, L)$ could be different from that of $\calC^\bs(\calT, L(T))$ (for any choice of a fixed $T \in \calT$).

\begin{remark}[classification of flat discrete vector bundles]
Flat vector bundles over a manifold $M$, modulo gauge transformations, correspond to representations of the fundamental group of $M$, modulo conjugacy. For a precise statement, see for instance Theorem 13.2 in \cite{Tau11} (in the context of principal bundles). This seems to carry over to the discrete setting. The fundamental group of a cellular complex can be replaced by the so-called edgepath group (described in Chapter 3 in \cite{Spa95} for the case of simplicial complexes). 
\end{remark}

\section{Discrete vector bundles with curvature \label{sec:curvature}}

\subsection{Cubes in the barycentric refinement}
Consider now an $n$-dimensional cube. For definiteness we consider the unit cube $[0,1]^n$ in $\bbR^n$ and denote it as $S$. We let $(e_i)_{i \in \lls 1, n \rrs}$ be the canonical basis of $\bbR^n$.

The vertices of $S$ can be indexed by the subsets of $\lls 1, n \rrs$. For any subset $U$ of $\lls 1, n \rrs$, we let $p_U$ be the vertex of $S$ defined by:
\begin{equation}
p_U = \sum_{i \in U} e_i.
\end{equation}

Any face $S'$ of $S$ is uniquely determined by two vertices $p_U$ and $p_V$, with $U \subseteq V$,  such that the vertices of $S'$ are exactly those of the form $p_W$ for $U \subseteq W \subseteq V$. Then we also have:
\begin{equation}
S' = \{ p_U + \sum_{i \in V \setminus U} \xi_i e_i \ : \ \forall i \in V \setminus U  \quad \xi_i \in [0, 1] \}.
\end{equation}
Subsets of $\lls 1, n \rrs$ are partially ordered by inclusion, and this uniquely determines a partial ordering of the vertices. Then $p_U$ is the smallest vertex of $S'$ and $p_V$ the largest.

Let $T$ be a simplex. For each face $T'$ of $T$ we let $b_{T'}$ be the isobarycenter of $T'$ or, more generally, a point in the interior of $T'$ (so a barycenter with respect to some strictly positive weights), referred to as the inpoint of $T'$. Recall that the barycentric refinement of $T$ is the simplicial complex whose $k$-dimensional simplices are those of the form $[b_{T_0}, b_{T_1}, \ldots , b_{T_k} ]$ such that the $T_i$ are two by two distinct subsimplices of $T$ satisfying $T_0 \subcell T_1 \subcell \ldots \subcell T_k$. We call such subsimplices of $T$ barycentric simplices.

The barycentric refinement may be coarsened as follows: for any two subsimplices $T''$ and $T'$ of $T$ such that $T'' \subcell T'$, we consider the cell $S(T'', T')$ which is the union of all the barycentric simplices $[b_{T_0}, b_{T_1}, \ldots , b_{T_k} ]$ such that $T'' \subcell T_0$ and $T_k \subcell T'$.

When we start with a simplex $T$, the cells of the form $S(T'', T')$ form a cellular complex, where each cell is, combinatorially, a cube.  The same holds true if the cell $T$ we start with is a cube. We may consider that $b_{T''}$ is the smallest vertex of $S(T'', T')$ and that $b_{T'}$ is the largest. The vector $b_{T'} - b_{T''}$ points towards the center of $T$. When $T$ is an $n$-dimensional simplex, this procedure will divided it into $(n+1)$ cubes of dimension $n$, each one of the form $S(T', T)$, where $T'$ is a vertex of $T$. We call this the cubical refinement of $T$.

\subsection{Discrete curvature and Bianchi identity}

If we relax condition (\ref{eq:com}) we model the parallel transport associated with connections with curvature, as opposed to flat connections. We still define the discrete covariant exterior derivative by (\ref{eq:discovextder}). We no longer have $\delta_\trans^{k+1}\delta_\trans^k = 0$, as in Lemma \ref{lem:dtdt0}. When we compute $\delta_\trans^{k+1}\delta_\trans^k$ we get an operator $\calC^k(\calT, L) \to \calC^{k+2}(\calT, L)$, which one would like to interpret in terms of a curvature.

\begin{definition}[Discrete curvature] \label{def:curv}
We suppose that $T''$ and $T$ are to cells of $\calT$, such that $T''$ is a codimension 2 face of $T$, and we let $T_0'$ and $T_1'$ be the two codimension 1 faces of $T$ which have  $T''$ as a common codimension 1 face. The curvature of $\trans$ is then $\curv_\trans$ defined by:
\begin{equation}\label{eq:disccur}
\curv_\trans(T, T'') = \pm (\trans_{TT'_0}\trans_{T'_0T''}- \trans_{TT'_1}\trans_{T_1'T''}).
\end{equation}
which is associated with the square $S(T'', T)$ (element of the cubical refinement of $\calT$), whose set of vertices is associated with $\{T'',T_0', T_1', T \}$. The sign in this definition is given by the orientation of the square, which is chosen such that the orientations of the two transverse cells $T''$ and $S(T'', T)$ induce the orientation of $T$.
\end{definition}

The definitions were chosen so as to have the trivial:
\begin{lemma}
A discrete connection is flat according to Definition \ref{def:flat} iff its curvature according to Definition \ref{def:curv} is $0$. 
\end{lemma}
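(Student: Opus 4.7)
The plan is essentially to unfold the two definitions and observe that, on each relevant pair $(T,T'')$, they encode precisely the same algebraic condition. First I would fix a cell $T$ and a codimension-$2$ face $T''$ of $T$, and name $T_0'$ and $T_1'$ the two codimension-$1$ faces of $T$ that contain $T''$, matching the notation used in both Definitions \ref{def:flat} and \ref{def:curv}. The flatness condition at $(T,T'')$ is, by definition, the equality
\[
\trans_{TT'_0}\trans_{T'_0T''} = \trans_{TT'_1}\trans_{T_1'T''},
\]
while the curvature at $(T,T'')$ is
\[
\curv_\trans(T, T'') = \pm \bigl(\trans_{TT'_0}\trans_{T'_0T''}-\trans_{TT'_1}\trans_{T_1'T''}\bigr).
\]
Since the prefactor $\pm 1$ is nonzero, the right-hand side of the second display vanishes iff the two compositions in the first display coincide. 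Thus, on a fixed pair $(T,T'')$, flatness and vanishing of curvature are the same statement.

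The second (and only other) step is to quantify. The flatness of the connection in the sense of Definition \ref{def:flat} demands the diagram commute for every pair $(T,T'')$ with $T''$ a codimension-$2$ face of $T$, and the curvature in Definition \ref{def:curv} is assigned to every such pair (via the square $S(T'',T)$ of the cubical refinement). So global flatness is equivalent to $\curv_\trans(T,T'')=0$ for every such pair, which is what had to be shown.

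The one point I would want to mention explicitly, for the sake of rigor, is that the notation $T_0',T_1'$ appearing in the two definitions refers to the same ordered (or unordered) pair of codimension-$1$ faces. This is built into Definition \ref{def:flat}, which already presumes there are exactly two such faces, and the same combinatorial fact underlies the construction of the square $S(T'',T)$ used in Definition \ref{def:curv}; the orientation convention in Definition \ref{def:curv} only fixes the ambiguous sign, which is immaterial to whether the difference is zero. There is no real obstacle in this proof — it is a direct comparison of the two definitions, as indicated by the word \emph{trivial} in the statement.
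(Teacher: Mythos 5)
Your proposal is correct and matches the paper's intent exactly: the paper offers no proof at all, introducing the lemma with the phrase ``the definitions were chosen so as to have the trivial,'' and your unfolding of the two definitions plus the remark that the sign prefactor is immaterial is precisely the (trivial) argument being invoked. Nothing is missing.
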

 
Definition \ref{def:curv} also gives:
\begin{proposition}[Curvature and the discrete covariant derivative]  With the notations of the Definition \ref{def:curv} we have:
\begin{equation}
(\delta_\trans^{k+1}\delta_\trans^k u)(T) =\pm  \curv_\trans(T, T'') u(T'').
\end{equation}
\end{proposition}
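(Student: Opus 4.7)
The plan is to simply expand $(\delta_\trans^{k+1}\delta_\trans^k u)(T)$ using the definition (\ref{eq:discovextder}) twice and then collect terms according to the codimension-$2$ subcells $T''$ of $T$. Concretely, I would first write
\begin{equation}
(\delta_\trans^{k+1}\delta_\trans^k u)(T) = \sum_{T'} \sum_{T''} \orient(T,T')\orient(T',T'')\, \trans_{TT'}\trans_{T'T''}\, u(T''),
\end{equation}
where $T'$ ranges over codimension-$1$ faces of $T$ and $T''$ over codimension-$1$ faces of $T'$. I would then exchange the order of summation and fix a codimension-$2$ face $T''$ of $T$.

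The key combinatorial input is that in a cellular complex such a $T''$ is a codimension-$1$ face of exactly two codimension-$1$ faces of $T$, namely the pair $T_0', T_1'$ appearing in Definition \ref{def:curv}. Restricted to this pair, the contribution to the inner sum is
\begin{equation}
\orient(T,T_0')\orient(T_0',T'')\, \trans_{TT_0'}\trans_{T_0'T''}\, u(T'') + \orient(T,T_1')\orient(T_1',T'')\, \trans_{TT_1'}\trans_{T_1'T''}\, u(T'').
\end{equation}
The standard relative-orientation identity (the one responsible for $\delta^2 = 0$ on ordinary cochains) gives $\orient(T,T_0')\orient(T_0',T'') = -\orient(T,T_1')\orient(T_1',T'')$, so the bracket collapses to $\pm(\trans_{TT_0'}\trans_{T_0'T''} - \trans_{TT_1'}\trans_{T_1'T''})u(T'')$ with the sign dictated by the chosen orientation of the square $S(T'',T)$. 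By Definition \ref{def:curv} this is exactly $\curv_\trans(T,T'')u(T'')$, yielding the claimed identity term by term (and the full statement after summing over $T''$).

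The only real obstacle is tracking the signs consistently: one must check that the sign convention for the square orientation in Definition \ref{def:curv} matches the sign coming out of the two relative-orientation factors. This is the standard verification that the coboundary-of-coboundary computation on a $2$-cube has the orientation of that $2$-cube, and I would handle it by picking a local model (say writing $T'' \subcell T_i' \subcell T$ in ordered vertex notation for a simplex, or directly for a cube in the cubical refinement) and checking the four signs once and for all; there is no further mathematical difficulty.
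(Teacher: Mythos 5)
Your proposal is correct and follows exactly the argument the paper intends (the paper states this proposition without proof, treating it as a direct consequence of Definition \ref{def:curv} and the same relative-orientation identity $\orient(T,T_0')\orient(T_0',T'')=-\orient(T,T_1')\orient(T_1',T'')$ that underlies Lemma \ref{lem:dtdt0}). Your closing remark that the displayed identity should be read as the contribution of each codimension-$2$ face $T''$, summed over all such $T''$, is also the right reading of the statement.
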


We have a Bianchi identity in this setting, which we now detail. Recall that the usual Bianchi identity says that the covariant exterior derivative of the curvature 2-form, which is a priori a certain endomorphism valued 3-form, is $0$.  In this identity, the relevant covariant exterior derivative is the one associated with the induced connection on the bundle of endomorphisms. The discrete identity will assert that certain linear operators attached to the 3-dimensional cubes, in the cubical refinement, is $0$.

\begin{definition}[Discrete bundle of endomorphisms and its connection] \mbox{}
\begin{itemize}
\item For $T\in \calT$ we consider the previously introduced cubical refinement of $T$, whose $k$-dimensional cells are (combinatorial) cubes of the form $S(T'', T')$, for subcells $T''$ and $T'$ of $T$, where $T''$ has codimension $k$ in $T'$. We let $\calS$ denote the cubical refinement of $\calT$. We then define $\End(L)$ to be the discrete vectorbundle on $\calS$, whose fiber at the cube $S(T'', T')$ is the space of linear maps from $L(T'')$ to $L(T')$.

\item The discrete vectorbundle $\End(L)$ on $\calS$ inherits a discrete connection from the discrete connection of $L$ on $\calT$, as follows. 

Consider a $k$-dimensional cube $S = S(T'', T')$, where we say that $b_{T''}$ is the smallest vertex and $b_{T'}$ is the largest. When $S'$ is a codimension $1$ face of $S$ there are two possibilities : either $b_{T''}$ is the smallest vertex of $S'$ and then we let $b_{T_0}$ be the largest, or $b_{T'}$ is the largest vertex of $S'$ and then we let $b_{T_0}$ be the smallest. We define the transport operator on $\End(L)$ through:
\begin{equation}
\mapping{\End(L)(S')}{\End(L)(S)}{u}{\twocases{\trans_{T' T0} \circ u}{b_{T''} \in S'}{u \circ \trans_{T_0 T''}}{b_{T'} \in S'}}
\end{equation}
\end{itemize}
\end{definition}

The spaces $\calC^\bs(\calS, \End(L))$ are defined as before and the discrete covariant exterior derivative linking these spaces, is defined as in (\ref{eq:discovextder}) from the given induced discrete connection on $\End(L)$.

\begin{remark} A discrete connection for the discrete bundle $L$ over $\calT$ is thus an element of $\calC^1(\calS, \End(L))$, where the element of $\End(L)$  attached to each edge of $\calS$ is \emph{bijective}. Compare with the fact that, in the continuous setting, the \emph{difference} between two connections is an endomorphism valued 1-form.
\end{remark}

\begin{theorem}[discrete Bianchi identity]\label{theo:bianchi}
The curvature of $(L, \trans)$, which is defined as an element of $\calC^2(\calS, \End(L))$ by (\ref{eq:disccur}), has a covariant exterior derivative (element of $\calC^3(\calS, \End(L))$) which is zero.
\end{theorem}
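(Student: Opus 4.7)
Fix a 3-cube $S = S(T''', T)$ of $\calS$, so $T'''$ is a codimension-3 face of $T$. The first step is to enumerate the six 2-faces of $S$ concretely. Let $F_1,F_2,F_3$ be the three codimension-1 faces of $T$ containing $T'''$, and, for $\{i,j,k\} = \{1,2,3\}$, set $e_i = F_j \cap F_k$; these are the three codimension-2 faces through $T'''$. The poset $\{X \in \calT : T''' \subcell X \subcell T\}$ is then a Boolean lattice on three atoms, so $S$ is combinatorially a 3-cube with $b_{T'''}$ and $b_T$ as diagonally opposite vertices. Its six 2-faces split into three \emph{bottom} squares $S(T''', F_i)$ (those containing $b_{T'''}$) and three \emph{top} squares $S(e_i, T)$ (those containing $b_T$).

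Next I would unfold $(\delta^2_{\trans^{\End}} \curv_\trans)(S) \in \Hom(L(T'''), L(T))$ using (\ref{eq:discovextder}) applied to $\End(L)$ on $\calS$: each 2-face $S'$ contributes $\orient(S,S')\,\trans^{\End}_{SS'}(\curv_\trans(S'))$. By the definition of the induced connection, for a bottom face $S' = S(T''', F_i)$ the transport acts by post-composition with $\trans_{T F_i}$, and for a top face $S' = S(e_i, T)$ by pre-composition with $\trans_{e_i T'''}$. Expanding each $\curv_\trans(S')$ via Definition \ref{def:curv} produces twelve terms, each of the form
\[
\trans_{T F_a}\,\trans_{F_a e_b}\,\trans_{e_b T'''} \qquad (a, b \in \{1,2,3\},\ a\neq b),
\]
that is, a transport along a monotone 3-path from $b_{T'''}$ to $b_T$ through the 1-skeleton of $S$. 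There are exactly $3! = 6$ such paths, and the key combinatorial observation is that each one is produced by exactly two of the six face-contributions: once by the bottom face $S(T''', F_a)$, whose two summands are indexed by the choice of intermediate edge $e_b \subset F_a$, and once by the top face $S(e_b, T)$, whose two summands are indexed by the codimension-1 face $F_a \supset e_b$. Once this pairing is set up, the conclusion reduces to verifying that the two occurrences of each monotone path carry opposite signs.

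The only subtle step, and the main obstacle, is this sign bookkeeping. Each sign is the product of the relative orientation $\orient(S,S')$ of the 3-cube with respect to the 2-face and the intrinsic $\pm$ in Definition \ref{def:curv}, which is itself dictated by the orientation of $S'$ together with the transverse direction. Because, by the convention in Definition \ref{def:curv}, all cube orientations on the barycentric refinement descend coherently from the orientation of the ambient cell $T$, the required cancellation is exactly the elementary identity $\partial^2 = 0$ for the cellular boundary of the standard oriented 3-cube, transported from scalars to transport operators by the induced connection on $\End(L)$. I would carry out the verification on the model cube $[0,1]^3$, checking that for each of the six monotone paths the signs assembled from its bottom-face contribution and its top-face contribution are opposite, thereby completing the proof.
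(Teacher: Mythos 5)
Your proposal is correct and follows essentially the same route as the paper: expand the covariant exterior derivative over the six faces of the 3-cube, obtain twelve transports along monotone paths from $b_{T'''}$ to $b_T$, and observe that each of the six paths occurs exactly twice (once from the bottom face through its top-rank cell, once from the top face through its middle-rank cell) with opposite signs. Your bookkeeping of the bottom/top face split and the pre-/post-composition by the induced connection on $\End(L)$ is in fact more explicit than the paper's own two-line argument, and deferring the sign check to the standard oriented cube is consistent with the level of detail the paper itself adopts.
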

\begin{proof}
That the discrete covariant exterior derivative of the curvature is zero expresses that for each 3-dimensional cube $S(T'',T')$ a certain linear map from $L(T'')$ to $L(T')$ is zero. This linear map is a sum of maps of the form:
\begin{equation}\label{eq:transcube}
\pm \trans_{T'T_1}\trans_{T_1T_0}\trans_{T_0T''},
\end{equation}
where the cells $T'' \subcell T_0 \subcell T_1 \subcell T'$ represent vertices of the cube, in increasing order. The sum consists of two such contributions from the curvature of each of the six faces of the cube. We thus get twelve maps of the form (\ref{eq:transcube}). They cancel two by two ; in fact each map $\trans_{T'T_1}\trans_{T_1T_0}\trans_{T_0T''}$ appears twice in the sum, with different signs.
\end{proof}

\begin{remark}[consistency] One would like the discrete covariant exterior derivative to be in some sense consistent with a continuous one. Recall that the coboundary operator acting on (realvalued) simplicial cochains is isomorphic to the exterior derivative acting on Whitney form, via the de Rham map. One would like a similar interpretation of the discrete covariant exterior derivative. 

For cellular complexes, an analogue of Whitney forms was provided in \cite{Chr08M3AS}, by solving recursively, the PDE system $\rmd^\star \rmd u = 0$ and $\rmd^\star u = 0$, or a discrete analogue. One motivation for identifying induced operators on subcells is to extend this construction to other differential complexes, where one wants to find preimages of cochains with coefficients. This connects with a broader theme of defining finite elements as solutions of local PDEs (possibly discretized at a subgrid scale).

This also raises the question of, to which extent, from a discrete vector bundle, one can reconstruct a continuous vector bundle. In the flat case this seems unproblematic. In the presence of curvature, a condition of small curvature might be necessary, to mimick that fibers vary continuously in the continuous setting. For instance, one could require that the maps in Lemma \ref{lem:pathindep} should be close to each other, in some sense. As interesting and perhaps simpler special cases, one could consider the reconstruction of line bundles, and bundles over two-dimensional manifolds.

See \cite{KnoPin16} for similar considerations.
\end{remark}

\begin{remark}[gauge transformations and curvature] 
Discrete gauge transformations are defined as in \S \ref{sec:gauge}, also in the presence of curvature.

Notice that the discrete curvature transforms naturally under discrete gauge tranformations. Indeed, consider a discrete connection $\trans$ for $L$ and a discrete connection $\trans$ for $L'$, as well as gauge transformations $\theta : L \to L'$ such that (\ref{eq:thetacom}) holds. Then we have:
\begin{equation}
\theta_T \curv_\trans(T, T'') = \curv_{\trans'}(T, T'') \theta_{T''}.
\end{equation}

Also, given a gauge transformation $\theta$ from $(L, \trans)$ to $(L', \trans')$, there is an induced gauge transformation from $\End(L)$ to $\End(L')$, equipped with their induced connections. For a cube $S(T'', T')$ it maps $u : L(T'') \to L(T')$ to $\theta_{T'} u \theta_{T''}^{-1} : L'(T'') \to L'(T')$. The above transformation of curvature can be seen as a special case.
\end{remark}

\begin{remark}[Chern classes] 
A motivation for developing a discrete Bianchi identity is to develop a corresponding notion of Chern classes. A first Chern class is defined in this spirit in \cite{Phi85}. The topic is discussed in Chapter 7 of \cite{Sch18}.
\end{remark}

\section{Finite element systems \label{sec:fes}}

\begin{definition} We fix a flat discrete vector bundle $L$ on $\calT$ in the above sense. A \emph{finite element system} on $\calT$ consists of the following data, which includes both spaces and operators:

\begin{itemize}
\item We suppose that for each $T \in \calT$, and each $k \in \bbZ$ we are given a vector space $A^k(T)$. For $k<0$ we suppose $A^k(T) = 0$.

\item For every $T \in \calT$ and $k \in \bbZ$, we have an operator $\diff^k_T: A^k(T)  \to A^{k+1}(T)$ called \emph{differential}. Often we will denote it just as $\diff$. We require $\diff^{k+1}_T \circ \diff^k_T = 0$. This makes $A^\bs(T)$ into a complex.
\item  Given $T, T'$ in $\calT$ with $T' \subcell T$ we suppose we have \emph{restriction} maps:
\begin{equation}
\rest^k_{T'T} : A^k(T) \to A^k(T'),
\end{equation}
subject to the commutation relations (when $T'' \subcell T' \subcell T$):
\begin{itemize}
\item $\rest_{T'T} \diff^k_T = \diff^k_{T'} \rest_{T'T}$,
\item $\rest_{T''T}  = \rest_{T''T'} \rest_{T'T}$. 
\end{itemize}

\item For any $k$-dimensional cell $T$ in $\calT$ we suppose we have an \emph{evaluation map} $\eval : A^k(T) \to L(T)$. 
We suppose that the following formula holds, for $u \in A^{k-1}(T)$:
\begin{equation}\label{eq:Stokes}
\eval_T \diff_T u = \sum_{T' \in \partial T} \orient(T, T') \trans_{TT'} \eval_{T'} \rest_{T'T} u.
\end{equation}
 see also Remark \ref{rem:altstokes} below.
\end{itemize}
\end{definition}

\begin{remark}[presheaf interpretation]\label{rem:presh}
The first three points in the definition can be rephrased in terms of categories and sheaves. First, the cellular complex $\calT$ may be considered as a category, where the objects are the cells and the morphisms are the inclusion maps. Complexes of vector spaces also constitute a category. The first three points then say that the family $A^\bs(T)$, indexed by $T \in \calT$, equipped with restriction operators associated with inclusions maps, is a contravariant functor from $\calT$ to the category of complexes of vector spaces. It is therefore a  \emph{presheaf} of complexes (according to the definition in \cite{KasSch06} Chapter 17).
\end{remark}

\begin{remark}[inverse system interpretation]\label{rem:invsys} 
The cellular complex $\calT$ can also be interpreted as a partially ordered set, the order being the inclusion relation. Then the family $A^\bs(T)$, indexed by $T \in \calT$, equipped with restriction operators, constitutes an \emph{inverse system} (also called a projective system) of complexes. See for instance \cite{KasSch06} \S 2.1.
\end{remark}

\begin{remark}[Stokes' theorem]
Identity (\ref{eq:Stokes}) is a generalization of Stokes' theorem. Indeed Stokes' theorem may be regarded as the special case where the spaces $A^k(T)$ consist of smooth enough realvalued $k$-forms on $T$, the discrete vectorbundle is given simply by $L(T) = \bbR$ and $\trans_{TT'} = \id_\bbR$,  and the evaluation map $\eval_T : A^k(T) \to \bbR$ is integration of a $k$-form on a $k$-dimensional cell.
\end{remark}

\begin{definition}[gluing spaces]
If $\calT'$ is a cellular subcomplex of $\calT$, we define:
\begin{align}\label{eq:invlim}
A^k(\calT') = \{ & (u_T)_{T \in \calT'} \in \bigoplus_{T \in \calT'} A^k(T) \ : \ T' \subcell T \Rightarrow u_{T'} = \rest_{T'T} u_T \}.
\end{align}
\end{definition}
We notice that, in the special case where $T$ is a cell and $\subcells(T)$ denotes the cellular complex consisting of all the subcells of $T$ in $\calT$, then the restriction maps provide an isomorphism:
\begin{equation}
\rest: A^\bs(T) \to A^\bs(\subcells(T)).
\end{equation}
In what follows we will usually not distinguish between $A^\bs(T)$ and $A^\bs(\subcells(T))$.

\begin{remark}[continuity]
The condition that, for $T' \subcell T$ we have  $u_{T'} = \rest_{T'T} u_T$, for elements $u$ of $A^k(\calT')$,  can be interpreted as a continuity condition. Indeed when two cells have a common face, the condition enforces the two restrictions to the common face to be equal.
\end{remark}

\begin{remark}[glued spaces are inverse limits]
If $\calT'$ is a cellular subcomplex of $\calT$, the spaces $A^k(T)$ with $T\in \calT'$ constitute an inverse system, see Remark \ref{rem:invsys}. Then $A^k(\calT')$ is an \emph{inverse limit}, and this defines it up to unique isomorphism. See for instance \cite{KasSch06} Equation (2.1.2) p. 36. 
\end{remark}

The notation (\ref{eq:invlim}) will be used in particular in the following two cases:
\begin{itemize}
\item $\calT' = \calT$. The spaces of the form $A^k(\calT)$ are those typically used in a Galerkin method to solve a PDE on the set covered by $\calT$. Such spaces will be referred to as the global spaces, as opposed to the local spaces $A^k(T)$ for $T \in \calT$.
\item $\calT' = \partial T$ for a given $T \in \calT$. By that we mean the set of cells in $\calT$ included in the boundary of a given cell $T\in \calT$. That is, $\partial T$ denotes the cellular complex consisting of the strict subcells of $T$.
\end{itemize}

\begin{remark}[reformulation of Stokes' identity as a commutation relation]
\label{rem:altstokes}
Another way of formulating (\ref{eq:Stokes}) is that for any cellular subcomplex $\calT'$ of $\calT$, the evaluations $\eval_T : A^k(T) \to L(T)$  (for $T \in \calT^{\prime k}$) provide a morphism of complexes:
\begin{equation} \label{eq:evalmor}
\eval: A^\bs(\calT')\to \calC^\bs(\calT', L).
\end{equation}
We will later provide conditions under which the evaluation morphism (\ref{eq:evalmor}) induces isomorphisms on cohomology groups. This would be an analogue of de Rham's theorem which asserts that the de Rham map, from real valued differential forms to real valued cellular cochains, gives isomorphisms between the respective cohomologies. We therefore refer to the map in (\ref{eq:evalmor}) as the de Rham map.
\end{remark}

We denote by $A^k_0(T)$ the kernel of the induced map $\rest: A^k(T) \to A^k(\partial T)$. We consider that the boundary of a point is empty, so that if $T$ is a point $A^k_0(T) = A^k(T)$.

\begin{definition}[Flabby finite element systems]
We say that $A$ admits extensions on $T \in \calT$, if the restriction map induces a surjection:
\begin{equation}
\rest: A^\bs(T) \to A^k(\partial T) .
\end{equation}
We say that $A$ admit extensions on $\calT$ or is \emph{flabby}, if it admits extensions on each $T\in \calT$.
\end{definition}

This notion corresponds to that of \emph{flabby sheaves} (\emph{faisceaux flasques} in French \cite{God73}), due to the following result
\begin{proposition}
The FES $A$ admits extensions on $\calT$ if an only if, for any cellular complexes $\calT'', \cal T'$  such that $\calT'' \subseteq \calT' \subseteq \calT$,  the restriction  $A^\bs(\calT') \to A^\bs(\calT'')$ is onto.
\end{proposition}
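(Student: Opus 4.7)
The proof has two directions, and the real work is in the forward direction.

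The easy direction is the converse: assuming the general surjectivity holds, fix a cell $T \in \calT$ and specialise to $\calT' = \subcells(T)$, $\calT'' = \partial T$. Since $A^\bs(T)$ is identified with $A^\bs(\subcells(T))$ via the restriction maps, surjectivity of $A^\bs(\calT') \to A^\bs(\calT'')$ is exactly the statement that $A^\bs(T) \to A^\bs(\partial T)$ is onto, i.e.\ that $A$ admits extensions on $T$. So this direction is immediate.

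For the forward direction, assume $A$ is flabby, and fix subcomplexes $\calT'' \subseteq \calT' \subseteq \calT$. The plan is to build an extension cell by cell. Enumerate the cells of $\calT' \setminus \calT''$ as $T_1, T_2, \ldots$ in a way compatible with dimension, so that $\dim T_i \leq \dim T_{i+1}$. Set $\calT_0 := \calT''$ and $\calT_i := \calT_{i-1} \cup \{T_i\}$. Since every proper subcell of $T_i$ either lies in $\calT''$ or has strictly smaller dimension than $T_i$ (and hence was enumerated earlier), we have $\partial T_i \subseteq \calT_{i-1}$, and in particular each $\calT_i$ is a cellular subcomplex.

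Now extend inductively. Given $u \in A^\bs(\calT'')$, set $u^{(0)} = u$. At step $i$, assume we have $u^{(i-1)} \in A^\bs(\calT_{i-1})$ restricting to $u$ on $\calT''$. Since $\partial T_i \subseteq \calT_{i-1}$, the restriction of $u^{(i-1)}$ defines an element of $A^\bs(\partial T_i)$. By the hypothesis that $A$ admits extensions on $T_i$, there exists $v \in A^\bs(T_i)$ with $\rest_{T'T_i} v = u^{(i-1)}_{T'}$ for every $T' \subcellstrict T_i$. Define $u^{(i)}$ to agree with $u^{(i-1)}$ on $\calT_{i-1}$ and with $v$ on $T_i$. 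Compatibility for pairs $T' \subcell T$ with both in $\calT_{i-1}$ holds by induction; compatibility for $T' \subcell T_i$ with $T' \in \calT_{i-1}$ holds by construction of $v$; and there is no pair with $T_i \subcell T$ and $T \in \calT_{i-1}$, because such a $T$ would have to lie in $\calT''$ (by closedness of $\calT''$ under subcells, if $T_i \subcellstrict T \in \calT''$ then $T_i \in \calT''$, contradiction) or have dimension strictly less than that of $T_i$, which is impossible. Hence $u^{(i)} \in A^\bs(\calT_i)$, and iterating yields an extension in $A^\bs(\calT')$.

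The main (and essentially only) obstacle is to order the cells so that at each step $\partial T_i$ already lives in the already-constructed subcomplex; ordering by non-decreasing dimension handles this. The case of infinitely many cells can be dealt with either by transfinite induction on a well-ordering refining the dimension order, or by a direct application of Zorn's lemma to partial extensions. Once the ordering is fixed, each extension step is a single invocation of the flabbiness hypothesis, and the compatibility check is straightforward.
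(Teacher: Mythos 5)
Your proof is correct and takes essentially the same route as the paper: the paper's one-line argument (``extend from subcomplexes to complexes, step by step, incrementing dimension by one each time'') is precisely your cell-by-cell induction ordered by non-decreasing dimension, which you have written out in full detail together with the easy converse. No gaps.
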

\begin{proof} 
If one can extend from the boundary of a cell to the cell, then one can extend from subcomplexes to complexes, step by step, incrementing dimension by one each time.
\end{proof}

\begin{remark}[surjectivity of restrictions]
In particular if $A$ admits extensions, then, when $T'$ is a subcell of $T$, the restriction $A^\bs(T) \to A^\bs(T')$ is onto. However this is, in general, a strictly weaker condition than the extension property. To see this, consider for instance the finite element spaces $A^0(T)$ consisting of $\poly^1$ functions on a quadrilateral $S$, on its edges $E$ and on its vertices $V$. Then the restriction from $A^0(S)$ to each edge $A^0(E)$ is onto, as are the other restrictions from faces to subfaces, but the restriction from $A^0(S)$ to $A^0(\partial S)$ is not onto, since the latter has dimension 4 but the former had dimension only 3. In practice therefore, finite element spaces on a square therefore include, in addition to the affine functions, a bilinear function. 
\end{remark}

\begin{remark}[kernels of differentials] \label{rem:kjt} We define $K(T) = \rmH^0(A^\bs(T)) \approx \ker \rmd : A^0(T)$. We notice that we have induced maps $\rest_{T'T} : K(T) \to K(T')$, whenever $T' \subcell T$. We also notice that we have a welldefined map $j_T: K(T) \approx \rmH^0(A^\bs(T)) \to \rmH^0( \calC^\bs(\calT, L)) \approx L(T)$: starting with an element in $K(T)$, restrict it to a vertex $T'$ to get an element of $K(T')$, evaluate it to get an element of $L(T')$ and parallel transport it to get an element of $L(T)$ (the composition of these steps is independent of the choice of vertex and path from the vertex to $T$).
\end{remark}

\begin{definition}[Exactness of a FES]\mbox{}

\begin{itemize}
\item We say that $A^\bs$ is exact on a cell $T \in \calT$ when the following, equivalent, conditions hold:

\begin{itemize}
\item The following sequence is exact:
\begin{equation}\label{eq:exactd}
\xymatrix{
A^0(T) \ar[r]^\diff & A^1(T) \ar[r]^\diff & \ldots 
}
\end{equation}
and moreover the map $j_T: K(T) \to L(T)$ defined in Remark \ref{rem:kjt} is an isomorphism.

\item The de Rham map $A^\bs(T) \to \calC^\bs(T, L)$ induces isomorphisms on cohomology.

\end{itemize}

\item We say that $A^\bs$ is \emph{locally exact} on $\calT$ when $A^\bs$ is exact on each $T \in \calT$.
\end{itemize}
\end{definition}

\begin{proof}
The equivalence holds by Lemma \ref{lem:cohlt}.
\end{proof}

\begin{definition}\label{def:compat}
We say that $A$ is \emph{compatible} when it is flabby and is locally exact.
\end{definition}

\begin{remark} [The flat connections of upwinding and band gap computations] \label{rem:convdiff}

Some PDEs, such as convection diffusion equations, can be expressed with covariant derivatives \cite{Chr13FoCM}, see also \cite{WuXu18}.

Consider real valued differential forms. Choose $A\in \Omega^1(U)$, called the connection $1$-form. Define the covariant exterior derivative $\rmd_A$ by:
\begin{equation}
\rmd_A u = \rmd u + A \wedge u.
\end{equation}
Then we have:
\begin{equation}
\rmd_A \rmd_A u = (\rmd A) \wedge u.
\end{equation}
Then $\rmd A \in \Omega^2(U)$ is identified as the curvature of $A$. We suppose $\rmd A = 0$, so that the operators $\rmd_A$ constitute a complex. For any contractible subdomain $T$ of $U$ we may choose $\phi \in \Omega^0(T)$ such that $\rmd \phi = A$. Then we have, for $u \in \Omega^k(T)$:
\begin{equation}
\rmd_A u = \exp(-\phi) \rmd ( \exp (\phi) u ).
\end{equation}
Furthermore $\phi$ is uniquely determined, if we impose, in addition, the value of $\phi(x_T)$ for some point $x_T \in T$.

For every cell $T \in \calT$, choose an interior point $x_T \in T$. Let $\phi_T : T \to \bbR$ be the unique function such that $\phi_T(x_T) = 0$ and $\rmd \phi_T = A$. For $u \in \Omega^{k-1}(T)$ with $k = \dim T$, we notice:
\begin{align}
\int_T \exp(\phi_T) \rmd_A u & = \int_T \rmd \exp(\phi_T) u,\\
&= \sum_{T' \subcell T} \orient(T,T') \int_{T'} \exp(\phi_T) u|_T',\\
&= \sum_{T' \subcell T} \orient(T,T') \exp(\phi_{T}(x_{T'})) \int_{T'} \exp(\phi_{T'}) u|_T'.
\end{align}
When $T$ is $k$-dimensional, we let $\eval_T : \Omega^k(T) \to \bbR$ be the map:
\begin{equation}  
\eval_T : u \mapsto \int_T \exp(\phi_T) u.
\end{equation}
And, when $T'$ is a codimension $1$ face of $T$ we put:
\begin{equation}
\trans_{TT'} = \exp(\phi_T(x_{T'})).
\end{equation}
Then the preceding identity becomes:
\begin{equation}
\eval_T (\rmd_A u) = \sum_{T' \subcell T} \orient(T,T') \trans_{TT'} \eval_{T'} u|_{T'},
\end{equation}
as required in (\ref{eq:Stokes}).

As a slight variant, consider complex-valued differential forms (but $A$ still real valued), and let:
\begin{equation}
\rmd_A u = \rmd u + i A \wedge u,
\end{equation}
with comparable consequences. One can even restrict attention to $A$ constant. On a torus, $A$ is not in general globally of the form $\rmd \phi$, but of course locally this still holds and can be used in particular on individual cells. Such observations were made in \cite{DobPas01}\cite{BofConGas06} in the context of band gap computations for photonic cristals. The corresponding numerical methods are variants of exponential fitting.
\end{remark}

\subsection{de Rham type theorems.}

The following theorem extends Proposition 5.16 in \cite{ChrMunOwr11}:
\begin{theorem}\label{theo:derham}
Suppose that the element system $A$ is compatible. Then the evaluation maps $\eval: A^\bs(\calT) \to \calC^\bs(\calT, L)$ induces isomorphisms on cohomology groups.
\end{theorem}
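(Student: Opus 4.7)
I would argue by induction on the number of cells in $\calT$, removing one \emph{maximal} cell at a time; by maximal I mean a cell $T \in \calT$ which is not a proper face of any other cell in $\calT$. Such a $T$ always exists (for instance, any cell of maximal dimension), and removing it leaves a cellular subcomplex $\calT' := \calT \setminus \{T\}$, because $\partial T \subseteq \calT'$. The ambient FES restricts to a compatible FES on any cellular subcomplex, so the inductive hypothesis can be applied both to $\calT'$ and, later, to $\partial T$.

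\emph{Base case.} If $\calT$ is a single vertex $T$, then $A^\bs(\calT) = A^\bs(T)$ and $\calC^\bs(\calT,L)$ has $L(T)$ in degree $0$ and nothing else; local exactness of $A$ at $T$ is exactly the statement that the de Rham map is a quasi-isomorphism.

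\emph{Inductive step.} Let $n = \dim T$. Flabbiness yields the short exact sequence of complexes
\begin{equation*}
0 \to A^\bs_0(T) \to A^\bs(\calT) \to A^\bs(\calT') \to 0,
\end{equation*}
since extending a compatible family on $\calT'$ to $\calT$ amounts to producing a preimage in $A^\bs(T)$ of its restriction to $A^\bs(\partial T)$, which is possible by the extension property on $T$. On the cochain side, maximality of $T$ ensures that no cell of $\calT$ has $T$ as a codimension-one face, so $\delta_\trans$ preserves the subspace of cochains supported on $\{T\}$; this gives
\begin{equation*}
0 \to L(T)[-n] \to \calC^\bs(\calT,L) \to \calC^\bs(\calT',L) \to 0,
\end{equation*}
where $L(T)[-n]$ denotes the complex with $L(T)$ concentrated in degree $n$. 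The evaluation maps intertwine the two sequences. By the associated long exact cohomology sequences and the Five Lemma, the inductive step reduces to (a) the de Rham map being a quasi-isomorphism on $A^\bs(\calT')$, which is the inductive hypothesis, and (b) the induced morphism $A^\bs_0(T) \to L(T)[-n]$ being a quasi-isomorphism.

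To establish (b) I would use a second pair of short exact sequences, now localized to $T$:
\begin{equation*}
0 \to A^\bs_0(T) \to A^\bs(T) \to A^\bs(\partial T) \to 0,
\end{equation*}
\begin{equation*}
0 \to L(T)[-n] \to \calC^\bs(\subcells(T),L) \to \calC^\bs(\partial T,L) \to 0.
\end{equation*}
The middle vertical is a quasi-isomorphism by local exactness at $T$ (together with Corollary \ref{cor:cohlt}, which identifies the cohomology of $\calC^\bs(\subcells(T),L)$ with $L(T)$ in degree $0$). The right vertical is a quasi-isomorphism by the inductive hypothesis applied to the strictly smaller cellular subcomplex $\partial T$. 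A second application of the Five Lemma then yields (b), completing the induction.

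The main obstacle is the careful bookkeeping in step (b): one must set up a commutative diagram of short exact sequences whose middle and right columns are already known to be quasi-isomorphisms, and this in turn requires peeling off the single cell $T$ in such a way that both the algebraic identification of the kernels (on the cochain side, using maximality of $T$ so that $\delta_\trans$ annihilates $L(T)[-n]$; on the FES side, using flabbiness to get surjectivity of $A^\bs(T) \to A^\bs(\partial T)$ and of $A^\bs(\calT) \to A^\bs(\calT')$) and the inductive reduction work simultaneously. Once the two diagrams and the two appeals to the Five Lemma are in place, the rest is diagram-chasing.
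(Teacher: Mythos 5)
Your argument is correct and is essentially the paper's own proof: the paper reduces the global statement to the local quasi-isomorphism $A^\bs(T) \to \calC^\bs(\subcells(T),L)$ and then defers to \cite{ChrMunOwr11}, whose argument is exactly your induction peeling off one maximal cell at a time via the two short exact sequences and the five lemma. Your step (b) is moreover the same localized five-lemma diagram that the paper writes out explicitly in the proof of Theorem \ref{theo:altcomp}.
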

\begin{proof}
Exactness gives that the map $A^\bs(T) \to \calC^\bs(\calS(T), L)$ induces isomorphims on cohomology groups.

From there the proof proceeds as in \cite{ChrMunOwr11}.
\end{proof}  

We also have the following extension of Proposition 5.17 in \cite{ChrMunOwr11}:
\begin{theorem} \label{theo:altcomp} Suppose that $A$ has extensions. Then $A$ is compatible if and only if the following condition holds:

For each $T \in \calT$ the sequence $A^\bs_0(T)$ has nontrivial cohomology only at index $k = \dim T$, and there the induced map:
\begin{equation}
\eval: \rmH^k A^\bs_0(T) \to L(T),
\end{equation}
is an isomorphism (it is well defined by (\ref{eq:Stokes})).

\end{theorem}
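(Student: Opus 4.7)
The plan is to reduce the statement to a five-lemma argument, applied to short exact sequences of complexes obtained from the extension hypothesis, and proceed by induction on $\dim T$.

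First, I would fix a cell $T\in\calT$ and introduce the kernel complex on the cochain side: let $\calC^\bs_0(T,L)$ denote the kernel of the restriction $\calC^\bs(\subcells(T),L) \to \calC^\bs(\partial T,L)$. Because the only cell of $\subcells(T)$ not in $\partial T$ is $T$ itself (of dimension $n:=\dim T$), this complex is concentrated in degree $n$, with value $L(T)$; its cohomology is therefore $L(T)$ in degree $n$ and $0$ elsewhere. The extension hypothesis makes the sequences
\begin{equation*}
0 \to A^\bs_0(T) \to A^\bs(T) \to A^\bs(\partial T) \to 0, \qquad
0 \to \calC^\bs_0(T,L) \to \calC^\bs(\subcells(T),L) \to \calC^\bs(\partial T,L) \to 0
\end{equation*}
short exact, and the evaluation maps intertwine them; this is just (\ref{eq:Stokes}) rewritten as in Remark \ref{rem:altstokes}.

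Next I would argue by induction on $\dim T$ that $A$ is compatible on $\calT$ iff the stated cohomology condition holds on every cell. If $\dim T=0$, then $\partial T=\emptyset$, so $A^\bs_0(T)=A^\bs(T)$; the stated condition becomes that $\rmH^k A^\bs(T)=0$ for $k>0$ and $\eval:\rmH^0 A^\bs(T)\to L(T)$ is an isomorphism, which is exactly the definition of local exactness on a vertex. For the inductive step, suppose the equivalence holds on all strict subcells of $T$; in particular, under either hypothesis, $A$ restricted to $\partial T$ is compatible in its own right (flabbiness restricts, and the local exactness on subcells is part of either assumption). Theorem \ref{theo:derham}, applied to the cellular complex $\partial T$, then gives that $\eval:A^\bs(\partial T)\to \calC^\bs(\partial T,L)$ induces isomorphisms on cohomology.

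With that in hand, the two long exact sequences in cohomology coming from the two short exact sequences above fit into a ladder, and the five-lemma gives: $\eval$ induces isomorphisms $\rmH^\bs A^\bs(T) \to \rmH^\bs \calC^\bs(\subcells(T),L)$ iff it induces isomorphisms $\rmH^\bs A^\bs_0(T) \to \rmH^\bs \calC^\bs_0(T,L)$. Since the right-hand cohomology is $L(T)$ concentrated in degree $n$, the second condition is precisely the one stated in the theorem, while the first condition is exactly local exactness on $T$. This closes the induction and proves the equivalence.

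The main obstacle is the bookkeeping around the evaluation morphism between the two short exact sequences: one must check that the connecting homomorphisms really match and, above all, that the restriction of the FES to the subcomplex $\partial T$ satisfies the hypotheses of Theorem \ref{theo:derham} at the inductive stage (flabbiness passes to subcomplexes automatically, local exactness on cells of $\partial T$ is delivered by the inductive hypothesis in either direction). Once that matching is verified, the algebraic content is just the five-lemma.
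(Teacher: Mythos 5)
Your proposal is correct and follows essentially the same route as the paper's proof: induction on the dimension of the cell, the commuting ladder of short exact sequences $0\to A^\bs_0(T)\to A^\bs(T)\to A^\bs(\partial T)\to 0$ and its cochain counterpart, the observation that $\calC^\bs_0(\subcells(T),L)$ is concentrated in degree $\dim T$ with value $L(T)$, Theorem \ref{theo:derham} applied to $\partial T$ via the inductive hypothesis, and the five lemma applied in both directions. The points you flag as needing care (that the de Rham theorem is applicable on $\partial T$ under either hypothesis, and that the evaluation maps intertwine the connecting homomorphisms) are exactly the ones the paper relies on as well.
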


\begin{proof}
\tpoint For cellular complexes consisting only of vertices, the equivalence trivially holds because, when $T$ is a point, $A^\bs_0(T) = A^\bs(T)$.

\tpoint We suppose now that $m >0$ and that the equivalence has been proved for cellular complexes consisting of cells of dimension at most $n< m$. Consider a cellular complex consisting of cells of dimension at most $m$.

Let $T\in \calT$ be a cell of dimension $m$. We suppose that the finite element system $A$ is compatible on the boundary of $T$. Since the boundary is $(m-1)$-dimensional we may apply the de Rham theorem \ref{theo:derham} there. In other words $A^\bs(\partial T) \to \calC^\bs(\partial T,L)$ induces isomorphisms on cohomology.

We write the following diagram:
\begin{equation}\label{eq:shortexact}
\xymatrix{
0 \ar[r] & A^\bs_0(T) \ar[r] \ar[d]^\eval & A^\bs(T) \ar[r] \ar[d]^\eval & A^\bs(\partial T) \ar[r] \ar[d]^\eval  & 0 \\
0 \ar[r] & \calC^\bs_0(\subcells (T),L) \ar[r] & \calC^\bs(\subcells (T),L) \ar[r] & \calC^\bs(\partial T,L) \ar[r]  & 0 
}
\end{equation}
Comments:
\begin{itemize}
\item The complex $\calC^\bs_0(\subcells (T),L)$ is rather trivial. The terms consist of cochains that are zero on the boundary of $T$. In other words the only non-zero space in the complex is at index $k = \dim T$, where it is $L(T)$. 

\item On the rows, the second map is inclusion and the third arrow restriction. Both rows are short exact sequences of complexes. 

\item The vertical maps are the de Rham map. 

\item The diagram commutes.
\end{itemize}

We write the two long exact sequences corresponding to the two rows, and connect them by the map induced by the de Rham map.
\begin{equation}
\xymatrix{
\rmH^{k-1} A^\bs(T)  \ar[r] \ar[d]  & \rmH^{k-1} A^\bs(\partial T)  \ar[r] \ar[d] &\rmH^{k}  A^\bs_0(T) \ar[r] \ar[d] & \rmH^{k} A^\bs(T)  \ar[r] \ar[d]  & \rmH^{k} A^\bs(\partial T) \ar[d]\\
\rmH^{k-1} \calC^\bs(\subcells (T),L)  \ar[r]  & \rmH^{k-1} \calC^\bs(\partial T,L)  \ar[r] &\rmH^{k}  \calC^\bs_0(\subcells (T),L) \ar[r]  & \rmH^{k} \calC^\bs(\subcells (T),L)  \ar[r]  & \rmH^{k} \calC^\bs(\partial T,L) 
}
\end{equation}

The equivalence is now proved in two steps:
\begin{itemize}
\item Suppose that (\ref{eq:exactd}) is exact. Then the first and fourth vertical maps are isomorphisms. By the induction hypothesis the second and fifth are isomorphisms. By the five lemma, the third one is an isomorphism. This can be stated by the condition formulated in the theorem.

\item Suppose that the stated condition holds. One applies again the five lemma to the long exact sequence, and obtains now that $A^\bs(T)$ has the same cohomology as $\calC^\bs(\calS(T), L)$.
\end{itemize}
 
\end{proof}

\subsection{Extensions, dimension counts and harmonic interpolation.}
The following proposition almost exactly reproduces Proposition 2.2 in \cite{ChrRap16}.

\begin{proposition}\label{prop:extrec} Suppose that $A$ is an element system and that $T \in \calT$. We are interested only in a fixed index $k \in \bbN$. Suppose that, for each cell $U \in \partial T$, each element $v$ of $A^k_0(U)$ can be extended to an element $u $ of $A^k(T)$ in such a way that, $\rest_{UT} u = v$ and for each cell $U' \in \partial T$ with the same dimension as $U$, but different from $U$, we have $\rest_{U'T} u  = 0$. Then $A^k$ admits extensions on $T$.
\end{proposition}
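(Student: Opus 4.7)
My plan is to prove this by induction on dimension, constructing the extension one skeleton at a time. Given $w \in A^k(\partial T)$ with components $w_U \in A^k(U)$ for $U \in \partial T$, I will build $u_0, u_1, \ldots, u_{\dim T - 1} \in A^k(T)$ such that $\rest_{VT} u_i = w_V$ for every $V \in \partial T$ of dimension at most $i$. Then $u := u_{\dim T - 1}$ is the required extension, proving that $\rest : A^k(T) \to A^k(\partial T)$ is onto.

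For the base case $i=0$, a vertex $V$ has empty boundary, so $A^k_0(V) = A^k(V)$ and in particular $w_V \in A^k_0(V)$. The hypothesis of the proposition produces $\tilde u_V \in A^k(T)$ satisfying $\rest_{VT}\tilde u_V = w_V$ and $\rest_{V'T}\tilde u_V = 0$ for every other vertex $V'$ of $\partial T$. Setting $u_0 := \sum_V \tilde u_V$, summed over vertices, establishes the base case.

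For the induction step, assume $u_{i-1}$ has been constructed. For each $i$-dimensional cell $U \in \partial T$, put $v_U := w_U - \rest_{UT} u_{i-1} \in A^k(U)$. I claim $v_U \in A^k_0(U)$: indeed, for any strict subcell $U' \subcellstrict U$ (necessarily of dimension $<i$), the compatibility of $w$ together with the induction hypothesis yields
\begin{equation*}
\rest_{U'U} v_U \;=\; \rest_{U'U} w_U - \rest_{U'U}\rest_{UT} u_{i-1} \;=\; w_{U'} - \rest_{U'T} u_{i-1} \;=\; 0.
\end{equation*}
The hypothesis of the proposition then furnishes $\tilde u_U \in A^k(T)$ with $\rest_{UT}\tilde u_U = v_U$ and $\rest_{U'T}\tilde u_U = 0$ for every other $i$-dimensional $U' \in \partial T$. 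Define $u_i := u_{i-1} + \sum_U \tilde u_U$. On an $i$-dimensional $V \in \partial T$ only the summand $\tilde u_V$ contributes nontrivially, giving $\rest_{VT} u_i = \rest_{VT} u_{i-1} + v_V = w_V$. On a lower-dimensional $V \in \partial T$, the induction hypothesis gives $\rest_{VT} u_{i-1} = w_V$, so it remains to verify $\rest_{VT}\tilde u_U = 0$ for each $i$-cell $U$: if $V \subcellstrict U$, then $\rest_{VT}\tilde u_U = \rest_{VU} v_U = 0$ because $v_U \in A^k_0(U)$; if $V$ is a face of some other $i$-cell $U'$, then $\rest_{VT}\tilde u_U = \rest_{VU'}\rest_{U'T}\tilde u_U = 0$. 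Using that every strict subcell of $T$ of dimension $<i$ is a face of at least one $i$-dimensional subcell of $T$ — a regularity property satisfied by the cellular complexes considered here — every term vanishes and the induction is complete.

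The only nontrivial step is the observation that the correction $v_U$ lies in $A^k_0(U)$; this is precisely what makes the hypothesis of the proposition applicable at each stage, and everything else is routine bookkeeping with the commutation $\rest_{VT} = \rest_{VU}\rest_{UT}$.
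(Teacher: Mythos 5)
Your proof is correct and follows essentially the same skeleton-by-skeleton induction as the paper's: at each stage the correction $w_U - \rest_{UT}u_{i-1}$ lands in $A^k_0(U)$, so the hypothesis applies, and the chosen extensions are summed over the cells of the current dimension. You are somewhat more explicit than the paper in checking that the added corrections do not disturb the already-matched lower-dimensional restrictions, which is a welcome (and harmless) extra verification.
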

\begin{proof}
In the situation described in the proposition we denote by $\ext_U v = u$ a chosen extension of $v$ (from $U$ to $T$).

Pick $v \in A^k(\partial T)$.  Define $u_{-1}= 0 \in A^k(T)$.

Pick $l \geq -1$ and suppose that we have a $u_{l} \in A^k(T)$ such that $v$  and $u_l$ have the same restrictions on all $l$-dimensional cells in $\partial T$. Put $w_l = v - \rest_{\partial T\, T} u_l \in A^k(\partial T)$. For each $(l+1)$-dimensional cell $U$ in $\partial T$, remark that $\rest_{U\partial T} w_l \in A^k_0(U)$, so we may extend it to the element $\ext_U \rest_{U\partial T} w_l \in A^k(T)$.
Then put:
\begin{equation}
u_{l+1} = u_l + \sum_{U \, : \, \dim U = l+1} \ext_U \rest_{U\partial T} w_l.
\end{equation}
Then $v$ and $u_{l+1}$ have the same restrictions on all $(l+1)$-dimensional cells in $\partial T$. 

We may repeat until $l+1 = \dim T$ and then $u_{l+1}$ is the required extension of $v$.
\end{proof}

\begin{proposition}[flabbyness vs dimension counts]
\label{prop:extdim}
Let $A$ be a FES on a cellular complex $\calT$. Then: 
\begin{itemize}
\item We have:
\begin{equation}\label{eq:bounddim}
\dim A^k (\calT) \leq \sum_{ T\in \calT}\dim A^k_0(T).
\end{equation}
\item Equality holds in (\ref{eq:bounddim}) if and only if $A^k$ admits extensions on each $T \in \calT$.
\end{itemize}
\end{proposition}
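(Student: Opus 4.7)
The plan is to filter the cellular complex by its skeleta and prove the inequality inductively on skeletal dimension. Write $\calT_m$ for the subcomplex of cells of dimension at most $m$, with the convention $\calT_{-1} = \emptyset$ and $A^k(\emptyset) = 0$. Restriction gives a linear map $\rho_m : A^k(\calT_m) \to A^k(\calT_{m-1})$, and I would study the short left-exact sequence
\begin{equation*}
0 \longrightarrow \ker \rho_m \longrightarrow A^k(\calT_m) \xrightarrow{\ \rho_m\ } A^k(\calT_{m-1}).
\end{equation*}

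The first key step is to identify the kernel. A compatible family $u = (u_T)_{T \in \calT_m}$ that restricts to zero on $\calT_{m-1}$ must satisfy $u_T = 0$ for every $T$ of dimension $<m$; and for each $T \in \calT^m$, since $\partial T \subseteq \calT_{m-1}$, the compatibility relations force $\rest_{T'T} u_T = 0$ for every $T' \subcell T$ with $T' \neq T$, i.e.\ $u_T \in A^k_0(T)$. Conversely, any family that is zero on $\calT_{m-1}$ and takes values in $A^k_0(T)$ on each $T \in \calT^m$ is automatically compatible. Hence $\ker \rho_m \cong \bigoplus_{T \in \calT^m} A^k_0(T)$, and consequently
\begin{equation*}
\dim A^k(\calT_m) \leq \dim A^k(\calT_{m-1}) + \sum_{T \in \calT^m} \dim A^k_0(T),
\end{equation*}
with equality precisely when $\rho_m$ is surjective. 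Telescoping this inequality from $m=0$ up to $m = \dim \calT$ yields (\ref{eq:bounddim}).

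The second step is to characterise when equality holds. I would argue that equality at every stage is equivalent to $\rho_m$ being surjective for each $m$, which in turn is equivalent to $A^k$ admitting extensions on every cell. Indeed, if $A^k$ admits extensions on each $T \in \calT^m$, then for any $v \in A^k(\calT_{m-1})$ its restriction to $\partial T \subseteq \calT_{m-1}$ lies in $A^k(\partial T)$ and can be extended to $A^k(T)$; doing this cell by cell on $\calT^m$ produces a preimage under $\rho_m$. Conversely, if $\rho_m$ is onto, then given $w \in A^k(\partial T)$ for some $T \in \calT^m$, one may (using flabbyness on lower-dimensional skeleta --- or simply the fact that $w$ itself is already a compatible family on $\partial T$ which we then glue with zero on the other $m{-}1$ cells not in $\partial T$\footnote{Strictly speaking the cleanest way is to extend $w$ by zero to an element of $A^k(\calT_{m-1})$; this requires that $A^k_0$ is already well-understood on lower cells, so one proceeds by induction on $m$ simultaneously with the dimension estimate.}) lift it to $A^k(\calT_m)$ and read off the $T$-component to obtain an extension to $T$.

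I do not expect a serious obstacle: the entire argument is a standard skeletal filtration, and the only slightly delicate point is the converse direction in the equality case, where one must be careful that the lifting provided by $\rho_m$ produces an extension of a prescribed element of $A^k(\partial T)$ and not merely \emph{some} element. This is handled by simultaneously inducting on $m$, using the already-established flabbyness on lower skeleta to extend $w \in A^k(\partial T)$ by zero to an element of $A^k(\calT_{m-1})$ before applying surjectivity of $\rho_m$.
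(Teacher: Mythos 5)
Your skeletal filtration argument is correct and is essentially the proof the paper relies on (it defers to \cite{ChrRap16}, where exactly this identification of $\ker \rho_m$ with $\bigoplus_{T \in \calT^m} A^k_0(T)$ and the resulting telescoping sum appear). One caveat: in the converse direction, literally gluing $w$ with zero on the cells of $\calT_{m-1}$ outside $\partial T$ is not compatible in general (such cells may have faces in $\partial T$ carrying nonzero values), so the correct route is the one your footnote and final paragraph point to --- extend $w$ from $\partial T$ to all of $\calT_{m-1}$ using the flabbyness already established on lower skeleta by the simultaneous induction, then lift through $\rho_m$.
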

\begin{proof}
The proof in \cite{ChrRap16} works verbatim.
\end{proof}

\begin{definition}
Given a FES $A$ on a cellular complex $\calT$, a \emph{system of degrees of freedom} is a choice of subspace $F^k(T) \subseteq A^k(T)^\star$, for each $k \in \bbN$ and $T \in \calT$. In that situation we can define maps $\Phi^k(T) : A^k(T) \to \bigoplus_{T' \subcell T} F^k(T)^\star$ by, for $u \in A^k(T)$:
\begin{equation}
\Phi^k(T)u = (\langle \cdot, \rest_{T'T} u \rangle )_{T' \subcell T} 
\end{equation}
where the brackets denote the canonical bilinear pairing $F^k(T) \times A^k(T) \to \bbR$. We say that the system $F$ is unisolvent on $A$ if $\Phi^k(T)$ is an isomorphism for each $T \in \calT$. 
\end{definition}

We will use the following result:
\begin{proposition}[unisolvence of degrees of freedom and flabbyness] 
\label{prop:suffunisolve}
Suppose that $A$ is a FES on a cellular complex $\calT$. Suppose that $F$ is a system of degrees of freedom for $A$. Suppose that for each $T \in \calT$, the canonical map $A^k_0(T) \to F^k(T)^\star$ is injective. Suppose that $T \in \calT$ is such that:
\begin{equation} \label{eq:dimageq}
\dim A^k(T)  \geq \sum_{T' \subcell T} \dim F^k(T).
\end{equation}
Then $F$ is unisolvent on $A$ on the cellular complex $\subcells (T)$, $A$ is flabby on $\subcells(T)$ and equality holds in (\ref{eq:dimageq}). 
\end{proposition}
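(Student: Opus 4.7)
The plan is to exhibit the hypothesis \eqref{eq:dimageq} as the tight end of a chain of inequalities, in which forced equality will simultaneously deliver flabbyness on $\subcells(T)$ and the upgrade of each canonical injection $A^k_0(T') \hookrightarrow F^k(T')^\star$ into an isomorphism. Once this is done, the unisolvence statement will follow from a short dimension induction. (Throughout, I read the summand in \eqref{eq:dimageq} as $\dim F^k(T')$, since $F^k(T')^\star$ is the factor that naturally pairs with $\rest_{T'T} u \in A^k(T')$ in the definition of $\Phi^k(T)$.)

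First I apply Proposition \ref{prop:extdim} to the cellular subcomplex $\subcells(T)$ to obtain
\[
\dim A^k(T) \;=\; \dim A^k(\subcells(T)) \;\leq\; \sum_{T' \subcell T} \dim A^k_0(T').
\]
The injectivity hypothesis then gives $\dim A^k_0(T') \leq \dim F^k(T')$ for every $T' \subcell T$. Summing and comparing with \eqref{eq:dimageq},
\[
\sum_{T' \subcell T} \dim F^k(T') \;\leq\; \dim A^k(T) \;\leq\; \sum_{T' \subcell T} \dim A^k_0(T') \;\leq\; \sum_{T' \subcell T} \dim F^k(T'),
\]
so every inequality in this chain must be an equality. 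The equality case of Proposition \ref{prop:extdim} yields flabbyness of $A^k$ on $\subcells(T)$; termwise equality in the last sum upgrades each $A^k_0(T') \to F^k(T')^\star$ to an isomorphism; and equality holds in \eqref{eq:dimageq}.

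For unisolvence, I need to show that $\Phi^k(S) : A^k(S) \to \bigoplus_{S' \subcell S} F^k(S')^\star$ is an isomorphism for every $S \subcell T$. Flabbyness restricts from $\subcells(T)$ to $\subcells(S)$, so the equality case of Proposition \ref{prop:extdim} gives $\dim A^k(S) = \sum_{S' \subcell S} \dim F^k(S')$; matching dimensions, it suffices to prove that $\Phi^k(S)$ is injective. I do this by induction on $\dim S$. For $\dim S = 0$ there is no boundary, so $A^k_0(S) = A^k(S)$ and $\Phi^k(S)$ coincides with the canonical map $A^k(S) \to F^k(S)^\star$, already shown to be an isomorphism. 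For $\dim S > 0$, take $u \in \ker \Phi^k(S)$; the composition rule $\rest_{S''S} = \rest_{S''S'} \rest_{S'S}$ implies $\Phi^k(S')(\rest_{S'S} u) = 0$ for every $S' \subsetneq S$, so the inductive hypothesis yields $\rest_{S'S} u = 0$ for all such $S'$, placing $u$ in $A^k_0(S)$; the $S$-component of $\Phi^k(S) u = 0$, paired against the now-established isomorphism $A^k_0(S) \to F^k(S)^\star$, then forces $u = 0$.

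I do not expect a genuine obstacle: the argument is a two-step sandwich, a dimension squeeze followed by a routine induction. The only bookkeeping point to watch is distinguishing the fixed cell $T$ on which the dimension inequality \eqref{eq:dimageq} is posited from the variable subcell $S \subcell T$ on which unisolvence must be checked, and verifying at each inductive step that the only ingredients invoked are isomorphisms already secured by the sandwich step.
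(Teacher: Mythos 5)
Your proof is correct. The paper itself offers no argument for this proposition—it simply cites Propositions 2.1 and 2.5 of \cite{ChrRap16}—so you have supplied the reasoning the paper delegates to that reference, and you have done so by exactly the expected mechanism: the dimension sandwich $\sum \dim F^k(T') \leq \dim A^k(T) \leq \sum \dim A^k_0(T') \leq \sum \dim F^k(T')$ forced tight by hypothesis (\ref{eq:dimageq}), with the equality case of Proposition \ref{prop:extdim} yielding flabbyness and termwise equality upgrading each injection $A^k_0(T') \to F^k(T')^\star$ to an isomorphism, followed by the induction on $\dim S$ for injectivity of $\Phi^k(S)$. Your parenthetical reading of the summand as $\dim F^k(T')$ (correcting the paper's typo) and your care in restricting flabbyness from $\subcells(T)$ to $\subcells(S)$ before matching dimensions are both the right calls; no gaps.
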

\begin{proof}
See Propositions 2.1 and 2.5 in \cite{ChrRap16}.
\end{proof}

\begin{example}
For each cell $T \in \calT$, equip $A^k(T)$ with a continuous scalar product $\langle \cdot | \cdot \rangle$, typically a variant of the $\rmL^2$ product. We define a system of degrees of freedom $F^k(T) \subseteq A^k(T)^\star$ as follows. For each $k$, we consider the following space of linear forms on $A^k(T)$:
\begin{align} 
F^k(T) & = \{\langle \cdot |  v \rangle \ : \ v \in \diff A^{k-1}_0(T)\} \oplus \{ \langle \rmd \cdot | v \rangle \ : \ v \in \diff A^{k}_0(T)\} \label{eq:harmdof} \\
& \qquad \oplus \{ l \circ \eval_T \ : \ l \in L(T)^\star \}, \label{eq:mindof}
\end{align}
where the last space in the direct sum should be included only for $k = \dim T$.

We call these the \emph{harmonic degrees of freedom}. For compatible finite element systems these degrees of freedom are unisolvent.

If one considers that the linear forms in  $F^k(T)$ are defined on more general fields, these DoFs yield a commuting interpolator onto $A^\bs(\calT)$, which we call the \emph{harmonic interpolator}. For more on this topic see \S 2.4 of \cite{ChrRap16}, in  particular Proposition 2.8 of that paper.

\end{example}

\begin{remark}[minimal spaces and harmonicity]
If the degrees of freedom consisting only of the spaces $F^k(T) = \{ l \circ \eval_T \ : \ l \in L(T)^\star \}$ are unisolvent, then the FES is minimal, and the fields provide an analogue of Whitney forms. One can obtain such a FES inside any compatible FES by imposing the degrees of freedom (\ref{eq:harmdof}) to be zero, while (\ref{eq:mindof}) are kept free.
This generalizes the construction of Whitney forms on cellular complexes given in \cite{Chr08M3AS}.
\end{remark}

\subsection{Discrete vector bundles: a dual picture \label{par:dualvb}}

Notice that the degrees of freedom $l \circ \eval_T$ for  $l \in L(T)^\star$ appearing in (\ref{eq:harmdof}) play a special role. In practice they often appear in a slightly different way, namely as integration against certain fields, forming a space $M(T)$ which is more tangible than $L(T)$ (the parallel transport operators acting on $M(T)$ can be more natural for instance).

We now make some remarks on this alternative point of view.

We suppose that we have for each $T$ a vector space $M(T)$ and a bilinear form $\langle \cdot , \cdot \rangle_T$ on $A(T) \times M(T)$. Moreover, when $T'$ has codimension $1$ in $T$ we suppose we have a bijective (linear) restriction map $s_{T'T}: M(T) \to M(T')$, subject to the condition that $s_{TT_0'}s_{T_0'T''} =  s_{TT_1'}s_{T_1'T''}$. The generalized Stokes theorem takes the form, for $\phi \in M(T)$ and $u \in A^{k-1}(T)$:
\begin{equation}\label{eq:ibpma}
\langle \rmd u, \phi \rangle_T = \sum_{T'} \orient(T, T') \langle \rest_{T'T} u,  s_{T'T} \phi \rangle_{T'}.
\end{equation}

In practice, this formula often arises as follows. The bilinear form $\langle \cdot , \cdot \rangle_T$ on $A(T) \times M(T)$ is the $\rmL^2$ scalar product (with respect to, say, the standard Euclidean metric). The space $M(T)$ is the kernel of the formal adjoint of $\rmd$. Identity (\ref{eq:ibpma}) is obtained by integration by parts, $m$ times when $\rmd$ is a differential operator of order $m$. Only boundary terms remain, by definition of the kernel of the formal adjoint. For boundary cells $T'$, the space $M(T')$ could be obtained as the kernel of a formal adjoint, or could appear as natural restrictions to $T'$ of elements of $M(T)$.  

For instance, one integrates the divergence of a vector field against the constants, which constitute the kernel of the gradient. Then the restriction operator on the constants is the usual trace and the restriction operator on the vector field is the trace of the normal component.

We will be interested in more complicated examples, for instance, integration of vector fields against the kernel of the deformation operator (i.e. rigid motions) which is the formal adjoint of the divergence operator acting on symmetric matrices), and integration of functions against the kernel of the Airy operator (i.e. affine functions), which is the formal adjoint of the Saint-Venant compatibility operator.

This data provides the discrete vector bundle defined by $L(T) = M(T)^\star $ and $\trans_{TT'} = s_{T'T}^\star$. Moreover it gives evaluation maps $\eval_T: A^k(T) \to L(T)$ defined by $u \mapsto \langle u, \cdot \rangle_T$.

Conversely, given the data $L$, $\trans$ and $\eval$,  one can define $M(T) = L(T)^\star$, $\langle u, \phi \rangle_T = \phi(\eval_T(u))$ for $\phi \in M(T)$ and $u \in A^k(T)$ and $s_{T'T} = \trans_{TT'}^\star$, so the two points of view are equivalent.

\section{Elasticity \label{sec:elasticity}}

\subsection{Spaces}

We work in dimension $2$.
\begin{itemize}
\item We denote by $\bbV = \bbR^2$ the space of column vectors and by $\bbV^\transp$ the space of row vectors,
\item We denote by $\bbM= \bbR^{2 \times 2}$ the space of matrices.
\item We denote by $\bbS= \bbR^{2 \times 2}_\sym$ the space of symmetric $2\times 2$ matrices and by $\bbK=\bbR^{2 \times 2}_{\Skew}$ the space of skewsymmetric  $2 \times 2$-matrices.
\end{itemize}

If $T$ is a subset of $\bbR^2$ (typically a simplex of dimension $0$, $1$ or $2$, or an open subset of $\bbR^2$), and $\bbE$ is a vector space (such as one of $\bbR, \bbV, \bbV^\transp, \bbM, \bbS, \bbK$, or a product of such spaces) we denote by $\Gamma(T, \bbE)$ the set of maps from $T$ to $\bbE$.  

These maps should be smooth enough that differential operators and traces (in the sense of restrictions to subsets of the boundary) make sense, so that one can define a suitable finite element system as subspaces. To get flabbyness, the regularity of the spaces should not be too big. Exactness under the differentials is also a regularity dependent issue. We will get back to the problem of getting the regularity of the fields right, to accomodate these constraints.

\subsection{Differential operators}
We will use the following differential operators, on scalar, vector and matrix fields, defined on some connected domain in $\bbR^2$.

\begin{itemize}
\item The gradient and curl of a scalar field are (row) vectorfields defined by:
\begin{align}
\grad [u] & = [\partial_1 u, \partial_2 u],\\
\curl [u] & = [\partial_2 u , - \partial_1 u].
\end{align}

\item The divergence and curl of (row) vectorfields are defined by:
\begin{align}
\div [u_1, u_2] = \partial_1 u_1 + \partial_2 u_2,\\
\curl [u_1, u_2] = \partial_1 u_ 2 -\partial_2 u_1.
\end{align}

\item The gradient of a (column) vectorfield is defined by:
\begin{equation}
\grad \left[\begin{array}{r}
u_1\\
u_2
\end{array} \right]
= \left[\begin{array}{cc}
\partial_1 u_1 &  \partial_2 u_1\\
\partial_1 u_2 & \partial_2 u_2
\end{array}\right]
\end{equation}

\item The deformation of a (column) vectorfield is the symmetric part of its gradient:
\begin{equation}
 \defo \left[\begin{array}{r}
u_1\\
u_2
\end{array} \right]
= \left[\begin{array}{cc}
\partial_1 u_1 &  1/2(\partial_2 u_1 + \partial_1 u_2)\\
1/2(\partial_1 u_2 + \partial_2 u_1) & \partial_2 u_2
\end{array}\right]
\end{equation}

\item The divergence of a matrix field is the (column) vectorfield defined by:
\begin{equation}
\div  \left[\begin{array}{rr}
u_{11} & u_{12} \\
u_{21} & u_{22}
\end{array} \right] 
= \left[\begin{array}{r}
\partial_1 u_{11} + \partial_2 u_{12}\\
\partial_1 u_{21} + \partial_2 u_{22}
\end{array}\right].
\end{equation}

\item The Airy operator acts as follows on a scalar field $u$:
\begin{equation}
\airy [u] = \left[ \begin{array}{rr}
\partial_{22} u & -\partial_{21} u \\
-\partial_{12} u & \partial_{11} u
\end{array} \right]
\end{equation}

\item The Saint Venant operator is the formal adjoint of the Airy operator:
\begin{equation}
\sven  \left[\begin{array}{rr}
u_{11} & u_{12} \\
u_{21} & u_{22}
\end{array} \right] 
= \partial_{22} u_{11} + \partial_{11} u_{22} - \partial_{12} u_{21} - \partial_{21} u_{12}.
\end{equation}
It encodes the Saint-Venant compatibility conditions for being a deformation tensor.
The sven operator may also by interpreted as the linearization of scalar curvature around the standard metric.
\end{itemize}

\begin{remark}[Hessian]
The Hessian of a scalar field $u$ is:
\begin{equation}
\hess(u) = \left[ \begin{array}{rr}
\partial_{11} u & \partial_{12} u \\
\partial_{21} u & \partial_{22} u
\end{array} \right]
\end{equation}

\begin{itemize}
\item We define the matrix $J$ as:
\begin{equation}\label{eq:jdef}
J = \left[ \begin{array}{ll}
0 & -1 \\
1  & 0
\end{array} \right]
\end{equation}
It encodes a direct rotation around the origin by $\pi/2$. Then the Hessian and the Airy operator are linked by:
\begin{equation}
\airy (u) = J^\transp \hess(u) J.
\end{equation}

\item We can also introduce the operator on matrices:
\begin{equation}\label{eq:kdef}
K u = u^\transp - \tr(u)I_2.
\end{equation}
Then the Hessian and the Airy operator are related by:
\begin{equation}
\airy(u) = - K \hess(u).
\end{equation}
\end{itemize}
\end{remark}

\begin{remark}[Formulas in different bases] 
We will often work with an additional orthonormal oriented basis $(\tau, \nu)$ of $\bbV$. Well-known formulas will be used without further ado. For instance, for a scalar field $u$, $\curl u = \partial_\nu u \tau - \partial_\tau u \nu$. For a column vector field one would write $\curl u = \partial_\tau u \cdot \nu - \partial_\nu u \cdot \tau$. Notice however that we have chosen to use differential operators only row wise. For a row vector field $u$ the formula thus becomes $\curl u = \partial_\tau u \nu - \partial_\nu u \tau$, given that $\tau$ and $\nu$ are column vectors.    
\end{remark}

\begin{remark}[Differential operators act on rows]
We let all differential operators act row wise. To account for differential operators acting column wise, we combine with transposition, denoted $\transp : u \mapsto u^\transp$.
\end{remark}

\begin{remark} We let the $\curl$ operator act on matrices row wise. Then we have the formula:
\begin{equation}
\curl u = - J \div (K u),
\end{equation}
where $J$ and $K$ were introduced in (\ref{eq:jdef}) and (\ref{eq:kdef}). This can be used to prove invariance properties of the curl of matrices under orthogonal basis change.
\end{remark}

\subsection{Elasticity complexes and diagram chasing}

The preceding operators may be arranged into sequences as follows:
\begin{proposition} [Elasticity stress complex]
We have a sequence:
\begin{equation}\label{eq:elastress}
\xymatrix{
\rmH^2(U, \bbR) \ar[r]^\airy & \rmH^0_{\div}(U,\bbS) \ar[r]^\div & \rmH^0(U, \bbV).
}
\end{equation}
Here:
\begin{equation}
\rmH^0_{\div}(U,\bbS) = \{u \in \rmH^0(U, \bbS) \ : \ \div u \in \rmH^0(U, \bbV) \}.
\end{equation}

The kernel of the Airy operator consists of the affine functions. 
The divergence operator is surjective.
If $U$ is contractible the sequence is exact.
\end{proposition}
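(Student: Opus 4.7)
The plan is to check the four assertions in order: (a) the sequence property $\div \circ \airy = 0$, (b) identification of $\ker \airy$ with the affine functions, (c) surjectivity of $\div$, and (d) exactness in the middle when $U$ is contractible. Assertions (a) and (b) follow from Schwarz's theorem by direct computation: for $u \in \rmH^2(U,\bbR)$, the first row of $\airy(u)$ is $(\partial_{22} u,\, -\partial_{21} u)$, whose (row) divergence is $\partial_1 \partial_{22} u - \partial_2 \partial_{21} u = 0$, and the second row is handled identically; the condition $\airy(u) = 0$ means that all second partials of $u$ vanish, hence $u$ is affine.

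For surjectivity of the divergence, given $v = (v_1, v_2)^\transp \in \rmH^0(U,\bbV)$ one may exhibit an explicit diagonal pre-image: take $\sigma_{11}(x_1,x_2) = \int_0^{x_1} v_1(s, x_2)\, ds$ and $\sigma_{22}(x_1, x_2) = \int_0^{x_2} v_2(x_1, s)\, ds$, which lie in $\rmH^0(U,\bbS)$ (for bounded $U$) and produce $\div \sigma = v$, so that $\sigma \in \rmH^0_{\div}(U, \bbS)$.

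The heart of the matter is exactness. Suppose $\sigma \in \rmH^0_{\div}(U, \bbS)$ satisfies $\div \sigma = 0$, i.e.\ $\partial_1 \sigma_{11} + \partial_2 \sigma_{12} = 0$ and $\partial_1 \sigma_{12} + \partial_2 \sigma_{22} = 0$. I would apply the Poincar\'e lemma for divergence-free row vector fields on the contractible domain $U$ twice: it yields scalars $\alpha, \beta \in \rmH^1(U,\bbR)$ with $(\sigma_{11}, \sigma_{12}) = (\partial_2 \alpha, -\partial_1 \alpha)$ and $(\sigma_{12}, \sigma_{22}) = (\partial_2 \beta, -\partial_1 \beta)$. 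Comparing the two representations of $\sigma_{12}$ gives $\partial_1 \alpha + \partial_2 \beta = 0$, i.e.\ $\div(\alpha, \beta) = 0$ in $\rmH^1$. A third application of the Poincar\'e lemma delivers $u \in \rmH^2(U, \bbR)$ with $\alpha = \partial_2 u$ and $\beta = -\partial_1 u$. Substituting back produces $\sigma_{11} = \partial_{22} u$, $\sigma_{12} = -\partial_{12} u$, $\sigma_{22} = \partial_{11} u$, so $\sigma = \airy(u)$ as required.

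The main obstacle is regularity bookkeeping rather than any deep structural issue: the data $\sigma$ lies only in $\rmL^2$ with distributional divergence in $\rmL^2$, so invoking the Poincar\'e lemma for divergence-free $\rmL^2$ fields to obtain $\rmH^1$ potentials is the first delicate point, and the subsequent step must turn these $\rmH^1$ potentials into an $\rmH^2$ scalar primitive. Both steps are standard on contractible Lipschitz domains, and the explicit Ces\`aro--Volterra-type path integrals referred to in the introduction give constructive formulas that confirm the regularity gain. Everything else reduces to applications of Schwarz's theorem.
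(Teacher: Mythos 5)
Your proof is correct, and all four assertions are handled soundly: $\div\circ\airy=0$ and $\ker\airy=$ affines by Schwarz, surjectivity by an explicit diagonal antiderivative (which indeed lands in $\rmH^0_{\div}(U,\bbS)$ after extending $v$ by zero to a box), and exactness by three applications of the two-dimensional Poincar\'e lemma for divergence-free fields. The symmetry $\sigma_{21}=\sigma_{12}$ is used exactly where it must be, namely to make the two representations of $\sigma_{12}$ comparable, and the regularity bookkeeping you flag ($\rmL^2$ div-free $\Rightarrow$ rotated gradient of an $\rmH^1$ potential on a simply connected Lipschitz domain, then $\rmH^1$ potentials with $\rmH^1$ gradient data $\Rightarrow$ $\rmH^2$ primitive) is the genuinely delicate part and is correctly identified as standard.

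The paper takes a different, more structural route: it does not prove this proposition directly but derives it from the commuting diagram (\ref{eq:chasestress}) of two vector-valued de Rham complexes via the BGG diagram chase of Proposition \ref{prop:chase}, and separately exhibits explicit Poincar\'e operators $\poincare_1,\poincare_2$ with the null-homotopy identities $\poincare_1\airy u=u-j(u)$, $\poincare_2\div v+\airy\poincare_1 v=v$, $\div\poincare_2 w=w$, which give exactness on star-shaped domains constructively. Your three applications of the scalar Poincar\'e lemma are essentially the hand-executed version of that diagram chase (each application corresponds to one de Rham exactness step consumed by the chase), so the underlying mechanism is the same; what you gain is a self-contained elementary argument specific to this complex, while the paper's machinery buys uniformity (the identical chase produces the strain complex and its higher-dimensional analogues) and, via the explicit homotopy operators, polynomial preservation and degree control that are needed later for the finite element constructions. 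One cosmetic point: the identification of $\ker\airy$ with the affine functions requires $U$ connected, which the paper assumes implicitly.
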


\begin{proposition} [Elasticity strain complex]
We have a sequence:
\begin{equation}\label{eq:elastrain}
\xymatrix{
\rmH^2(U, \bbV) \ar[r]^\defo & \rmH^1_{\sven}(U,\bbS) \ar[r]^\sven & \rmH^0(U, \bbR).
}
\end{equation}
Here:
\begin{equation}
\rmH^1_{\sven}(U,\bbS) = \{u \in \rmH^1(U, \bbS) \ : \ \sven u \in \rmH^0(U, \bbR) \}. \label{eq:shighreg}
\end{equation}

The kernel of the deformation operator consists of the rigid motions.
The sven operator is surjective.
If $U$ is contractible the sequence is exact.
\end{proposition}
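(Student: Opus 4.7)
The plan is to verify four items: (i) the complex property $\sven\,\defo=0$; (ii) $\ker\defo$ is exactly the space of rigid motions; (iii) surjectivity of $\sven$; and (iv) exactness at the middle term when $U$ is contractible. Items (i)--(iii) are routine; only (iv) is substantive and amounts to the classical Saint-Venant recovery of a displacement from a compatible strain. For (i), writing $\epsilon=\defo u$ with $\epsilon_{ii}=\partial_i u_i$ and $\epsilon_{12}=\epsilon_{21}=\tfrac12(\partial_2 u_1+\partial_1 u_2)$, direct substitution into $\sven\epsilon=\partial_{22}\epsilon_{11}+\partial_{11}\epsilon_{22}-\partial_{12}\epsilon_{21}-\partial_{21}\epsilon_{12}$ produces $\partial_{122}u_1+\partial_{112}u_2-\partial_{112}u_2-\partial_{122}u_1=0$. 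For (ii), $\defo u=0$ forces $\grad u$ to be everywhere skew-symmetric; differentiating the identities $\partial_i u_i = 0$ and $\partial_1 u_2+\partial_2 u_1=0$ once more gives all $\partial_{ij}u_k=0$, so $u(x)=a+Bx$ with $a\in\bbV$ and $B\in\bbK$, i.e. a rigid motion.

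For (iii) I would use the ansatz $u=vI_2$, which collapses to $\sven u=\partial_{11}v+\partial_{22}v=\Delta v$. Given $f\in\rmH^0(U,\bbR)$, extend by zero to a smoothly bounded ball $B\supset\overline U$ and solve the Dirichlet problem $\Delta v=f$ on $B$; interior elliptic regularity then gives $v\in\rmH^2_{\loc}(B)$, and so $u=vI_2\in\rmH^1(U,\bbS)$ with $\sven u=f\in\rmH^0(U,\bbR)$, i.e. $u\in\rmH^1_{\sven}(U,\bbS)$ as required.

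The key step (iv) is a two-stage Cesaro-Volterra construction. Given $\epsilon\in\rmH^1_{\sven}(U,\bbS)$ with $\sven\epsilon=0$, I first construct the infinitesimal rotation $\omega\in\rmH^1(U,\bbR)$ whose derivatives should be
\[
\partial_1\omega=\partial_1\epsilon_{12}-\partial_2\epsilon_{11},\qquad \partial_2\omega=\partial_1\epsilon_{22}-\partial_2\epsilon_{12}.
\]
The cross-compatibility $\partial_2(\partial_1\omega)=\partial_1(\partial_2\omega)$ rearranges exactly to $\sven\epsilon=0$. Since the right-hand sides lie in $\rmL^2(U)$ and $U$ is simply connected, the Poincar\'e lemma at $\rmL^2$ regularity produces $\omega$, unique up to an additive constant. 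With $\omega$ in hand, prescribe the target gradient entries $\partial_1 u_1=\epsilon_{11}$, $\partial_2 u_1=\epsilon_{12}-\omega$, $\partial_1 u_2=\epsilon_{12}+\omega$, $\partial_2 u_2=\epsilon_{22}$; closedness of the two associated 1-forms follows directly from the prescribed derivatives of $\omega$. A second application of the Poincar\'e lemma, now lifting $\rmH^1$ closed 1-forms to $\rmH^2$ potentials on the contractible $U$, yields $u\in\rmH^2(U,\bbV)$ with $\defo u=\epsilon$, unique up to the rigid motion ambiguity identified in (ii).

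The main obstacle is the bookkeeping of Sobolev regularity through the two successive rounds of potential extraction: lifting $\rmL^2$ closed 1-forms to $\rmH^1$ potentials, and then $\rmH^1$ closed 1-forms to $\rmH^2$ potentials, both on the contractible $U$. Both facts are standard for smoothly bounded contractible domains but must be invoked carefully. Alternatively, the entire construction can be packaged as the Cesaro-Volterra path integral for elasticity developed in \cite{ChrHuSan18}, which delivers an explicit $u$ from $\epsilon$ and tracks the regularity automatically.
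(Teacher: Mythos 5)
Your argument is correct, and all four items check out: the computation of $\sven\defo=0$, the identification of the kernel via the identity $\partial_j\partial_k u_i=\partial_j\epsilon_{ik}+\partial_k\epsilon_{ij}-\partial_i\epsilon_{jk}$, the surjectivity via the ansatz $u=vI_2$ with $\sven(vI_2)=\Delta v$, and the two-stage Saint-Venant recovery (the compatibility of the prescribed $\grad\omega$ being exactly $\sven\epsilon=0$, and the closedness of the two $1$-forms for $u_1,u_2$ following from the prescription of $\omega$). You take a genuinely different route from the paper, which does not prove this proposition directly: it obtains the strain complex from the vector-valued de Rham diagram (\ref{eq:chasestrain}) via the BGG chase of Proposition \ref{prop:chase}, inheriting exactness from the de Rham rows (with the Sobolev-level exactness referred to \cite{ArnHu19}), and separately records explicit Poincar\'e operators $\poincare_1,\poincare_2$ with null-homotopy identities that yield exactness on star-shaped domains. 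Your construction is in effect the hand-made version of that chase: your rotation $\omega$ is precisely the preimage extracted in the bottom row of (\ref{eq:chasestrain}), and your two successive Poincar\'e lemmas replace the exactness of the two de Rham rows. What your route buys is self-containedness and elementary transparency; what the paper's routes buy is, respectively, transferal of all the Sobolev bookkeeping you rightly flag onto the well-understood regularity theory of de Rham complexes (and a template that scales to dimension $3$ and other complexes), and, for the Poincar\'e-operator route, explicit polynomial-preserving homotopies that are reused later in the finite element construction. The only caveats, which you already acknowledge, are that lifting closed $\rmL^2$ (resp.\ $\rmH^1$) $1$-forms to $\rmH^1$ (resp.\ $\rmH^2$) potentials globally on $U$ requires some boundary regularity beyond bare contractibility, and your surjectivity argument implicitly assumes $U$ bounded; the paper is equally informal on both points, so neither is a gap relative to the statement as given.
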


These complexes can be deduced from vector valued de Rham sequences through a diagram chase of the following type:
\begin{proposition}\label{prop:chase}
Suppose we have two complexes linked in a commuting diagram
\begin{equation}
\xymatrix{
X^0 \ar[r]^\rmd                 & X^1 \ar[r]^\rmd                 & \ldots \ar[r]^\rmd  & X^i \ar[r]^\rmd & X^{i+1} \ar[r]^{\rmd} & \ldots ,\\
Y^0 \ar[r]^\rmd  \ar[ur]^{\phi^0} & Y^1 \ar[r]^\rmd  \ar[ur]^{\phi^1} &  \ldots \ar[r]^\rmd  \ar[ur]^{\phi^{i-1}} &Y^i \ar[r]^\rmd  \ar[ur]^{\phi^i} & Y^{i+1} \ar[r]^\rmd \ar[ur]^{\phi^{i+1}}  & \ldots 
}
\end{equation}
Suppose furthermore that we have an index $i$ such that:
\begin{itemize} 
\item for $j <i$, $\phi^j$ is injective,
\item $\phi^i$ is bijective,
\item for $j >i$, $\phi^j$ is surjective. 
\end{itemize}
Then we get a sequence, with $\bbd = \rmd \Psi \rmd$ and $\Psi = (\phi^i)^{-1}$:
\begin{equation}
\xymatrix{
\ldots \ar[r]^\rmd & \coker \phi^i \ar[r]^\rmd & \coker \phi^{i-1} \ar[r]^\bbd & \ker \phi^{i+1} \ar[r]^{\rmd} &  \ker \phi^{i+2} \ar[r]^{\rmd} & \ldots
}
\end{equation}
Furthermore, if $X^\bs$ and $Y^\bs$ are exact then this new sequence is also exact.
\end{proposition}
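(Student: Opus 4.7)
The plan is to execute the classical BGG diagram chase, broken into three stages: well-definedness of the new differentials, the complex property, and exactness (under the added hypothesis that $X^\bullet$ and $Y^\bullet$ are exact). First, I read off from the commuting diagram that $\phi^{j+1}\rmd = \rmd\phi^j$ (viewed as maps $Y^j \to X^{j+2}$), so $\rmd$ carries $\phi^{j-1}(Y^{j-1}) \subseteq X^j$ into $\phi^j(Y^j) \subseteq X^{j+1}$, and carries $\ker\phi^j \subseteq Y^j$ into $\ker\phi^{j+1} \subseteq Y^{j+1}$. This gives induced differentials $\rmd : \coker\phi^{j-1} \to \coker\phi^j$ (non-trivial for $j \leq i-1$, since $\phi^j$ is surjective for $j \geq i$) and $\rmd : \ker\phi^j \to \ker\phi^{j+1}$ (non-trivial for $j \geq i+1$, since $\phi^j$ is injective for $j \leq i$). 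For the connecting operator $\bbd = \rmd\Psi\rmd$, I check that $\bbd x$ lies in $\ker\phi^{i+1}$ using $\phi^{i+1}\rmd\Psi\rmd x = \rmd\phi^i\Psi\rmd x = \rmd^2 x = 0$, and that it descends to $\coker\phi^{i-1}$: if $x = \phi^{i-1}y$ then $\Psi\rmd x = \Psi\phi^i\rmd y = \rmd y$, so $\bbd x = \rmd^2 y = 0$.

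Second, I verify the complex property. At positions in the pure cokernel or kernel segments, $\rmd^2 = 0$ follows from $\rmd^2 = 0$ on $X^\bullet$ or $Y^\bullet$. For the compositions flanking the connecting map, $\bbd \circ \rmd = \rmd\Psi\rmd\rmd = 0$ on $\coker\phi^{i-2} \to \ker\phi^{i+1}$, and $\rmd \circ \bbd = \rmd\rmd\Psi\rmd = 0$ on $\coker\phi^{i-1} \to \ker\phi^{i+2}$. These are immediate from the definition of $\bbd$ and $\rmd^2 = 0$.

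Third, assuming $X^\bullet$ and $Y^\bullet$ are exact, I prove exactness of the new sequence node by node. At positions deep in the cokernel (resp.\ kernel) tail, a standard chase using exactness of $X^\bullet$ (resp.\ $Y^\bullet$) together with injectivity of $\phi^j$ for $j<i$ (resp.\ surjectivity for $j>i$) produces the required preimages and normalizes them modulo the relevant kernels or images. The two critical nodes are those adjacent to the connecting map. At $\coker\phi^{i-1}$: if $\bbd[x] = 0$, then $\rmd\Psi\rmd x = 0$ in $Y^{i+1}$, so by exactness of $Y^\bullet$ we write $\Psi\rmd x = \rmd y$ with $y \in Y^{i-1}$; applying $\phi^i$ gives $\rmd x = \rmd\phi^{i-1}y$, hence $x - \phi^{i-1}y$ is a $\rmd$-cocycle in $X^i$ and, by exactness of $X^\bullet$, equals $\rmd z$ for some $z \in X^{i-1}$, so $[x] = \rmd[z]$ is in the image of the preceding differential. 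At $\ker\phi^{i+1}$: if $y \in \ker\phi^{i+1}$ with $\rmd y = 0$, then by exactness of $Y^\bullet$ we write $y = \rmd w$ with $w \in Y^i$; then $\phi^i w$ is a $\rmd$-cocycle in $X^{i+1}$ because $\rmd\phi^i w = \phi^{i+1}\rmd w = \phi^{i+1}y = 0$, so by exactness of $X^\bullet$ we obtain $x \in X^i$ with $\rmd x = \phi^i w$, and then $\bbd[x] = \rmd\Psi\rmd x = \rmd w = y$, exhibiting $y$ in the image of $\bbd$.

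The main obstacle is the double lift at the two nodes adjacent to $\bbd$: one must move back and forth between $X^\bullet$ and $Y^\bullet$ using the isomorphism $\phi^i$ in combination with exactness on both sides, while at the outer nodes one must carefully choose between invoking injectivity (to cancel representatives modulo images in $X$) and surjectivity (to pull elements in $X$ back to $Y$). The verifications themselves are routine once the bookkeeping is set up.
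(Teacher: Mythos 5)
Your proof is correct. Note that the paper itself gives no proof of Proposition \ref{prop:chase}: it is stated as the standard BGG diagram chase (in the spirit of \cite{Eas99} and the Arnold--Falk--Winther papers cited in the introduction), so there is no argument of the authors' to compare yours against; what you have written is a complete and accurate verification of the assertion. Two small remarks. First, the derived sequence as displayed in the paper reads $\coker\phi^i \to \coker\phi^{i-1}$, which is an index slip (indeed $\coker\phi^i = 0$ since $\phi^i$ is bijective, and the arrow would point against $\rmd$); your reading, with induced maps $\rmd:\coker\phi^{j-1}\to\coker\phi^j$ terminating at $\coker\phi^{i-1}$, is the correct one and silently repairs this. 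Second, at the ``outer'' nodes you compress the chase into one sentence; the details do work as you indicate --- at $\coker\phi^j$ with $j\le i-2$ one writes $\rmd x = \phi^{j+1}y$, uses injectivity of $\phi^{j+2}$ to get $\rmd y=0$, lifts $y=\rmd w$ by exactness of $Y^\bs$, and corrects $x$ by $\phi^j w$ before invoking exactness of $X^\bs$; dually at $\ker\phi^j$ with $j\ge i+2$ one uses surjectivity of $\phi^{j-2}$ to adjust the lift $w$ of $y$ into $\ker\phi^{j-1}$ --- but spelling out at least one of these two normalizations would make the write-up self-contained, since that is exactly where the injectivity/surjectivity hypotheses enter away from the pivot index. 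The treatment of the two nodes adjacent to $\bbd$, which is the only genuinely delicate part, is fully written out and correct.
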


\begin{proposition} We  have the commuting diagram:
\begin{equation} \label{eq:chasestress}
\xymatrix{
\rmH^2(U, \bbR) \ar[r]^\curl & \rmH^1(U,\bbV^\transp) \ar[r]^\div & \rmH^0(U, \bbR),\\
\rmH^1(U, \bbV) \ar[r]^\curl \ar[ru]^\transp & \rmH^0_{\div}(U,\bbM) \ar[r]^\div \ar[ru]^\Skew & \rmH^0(U, \bbV).
}
\end{equation}
Here $\transp$ denotes transposition and:
\begin{equation}
\Skew \left[ \begin{array}{rr}
u_{11} & u_{12} \\
u_{21} & u_{22}
\end{array} \right] = u_{21} - u_{12}.
\end{equation}
The elasticity stress complex (\ref{eq:elastress}) comes from (\ref{eq:chasestress}) by Proposition \ref{prop:chase}.
\end{proposition}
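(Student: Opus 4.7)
The statement contains two claims: that the diagram (\ref{eq:chasestress}) commutes and consists of two complexes, and that Proposition \ref{prop:chase} applied to it with $i=0$ reproduces the stress complex (\ref{eq:elastress}). Both pieces are short coordinate verifications in dimension two, plus one regularity step.

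First I would check the single nontrivial commutation, namely $\div \circ \transp = \Skew \circ \curl$ on $\rmH^1(U,\bbV)$. Both sides evaluate to $\partial_1 u_1 + \partial_2 u_2$ on a column field $u = [u_1, u_2]^\transp$ by a one-line coordinate calculation, using that $\curl$ acts rowwise on column-valued fields. That each row of (\ref{eq:chasestress}) is a complex is the equally short computation that $\div \curl = 0$ on scalars and on column fields, by equality of mixed partials; exactness of each row on contractible $U$ is the standard vector-valued de Rham fact for the $\curl$--$\div$ sequence in the plane.

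To run the chase at $i=0$, I would verify the three conditions of Proposition \ref{prop:chase}: vacuously for $j<0$, bijectivity of $\phi^0 = \transp$ is immediate (it is the identification of columns with rows), and surjectivity of $\phi^1 = \Skew : \rmH^0_{\div}(U,\bbM) \to \rmH^0(U,\bbR)$ is the subtle point. A naive preimage by a skew matrix $(a/2)J$, with $J$ as in (\ref{eq:jdef}), has divergence proportional to $J \grad a$, which is not in $\rmH^0$ when $a$ is merely $\rmL^2$; the fix is to add a symmetric corrector absorbing this distributional divergence, which in turn rests on surjectivity of $\div : \rmH^0_{\div}(U,\bbS) \to \rmH^0(U,\bbV)$ from the stress complex itself. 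This is the step I expect to be the main obstacle and the one worth writing out carefully.

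Finally I would identify the chase output with (\ref{eq:elastress}). Since $\Skew$ vanishes exactly on symmetric matrices, $\ker \phi^{1} = \rmH^0_{\div}(U,\bbS)$; with no $\phi^{-1}$ or $\phi^{2}$ the outer terms are $X^0 = \rmH^2(U,\bbR)$ and $Y^2 = \rmH^0(U,\bbV)$. The second differential is the restriction of $\div$, visibly matching (\ref{eq:elastress}). The first is $\bbd = \curl \circ \transp^{-1} \circ \curl$, which on $u \in \rmH^2(U,\bbR)$ unwinds through the intermediate column field $[\partial_2 u, -\partial_1 u]^\transp$ to the Hessian-with-swapped-signs matrix defining $\airy u$, completing the identification.
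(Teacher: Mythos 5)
Your overall plan is sound, and in fact the paper offers no proof of this proposition at all -- it is stated as a routine verification -- so your coordinate checks of the commutation $\div \circ \transp = \Skew \circ \curl$, of $\div\curl = 0$ row-wise, and of $\bbd = \curl\,\transp^{-1}\curl = \airy$ supply exactly what is implicitly being asserted. Those computations are all correct.

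The one step that does not go through as written is your argument for surjectivity of $\phi^1 = \Skew : \rmH^0_{\div}(U,\bbM) \to \rmH^0(U,\bbR)$. Your corrector must be a symmetric $\rmL^2$ field $s$ with $\div s$ equal (modulo $\rmL^2$) to $-\div\bigl(\tfrac{a}{2}J\bigr)$, which is a general element of $\rmH^{-1}(U,\bbV)$; the surjectivity you cite from the stress complex is $\div : \rmH^0_{\div}(U,\bbS) \to \rmH^0(U,\bbV)$, i.e.\ onto $\rmL^2$, which is the wrong target. What you actually need is surjectivity of $\div : \rmH^0(U,\bbS) \to \rmH^{-1}(U,\bbV)$, which is true but requires its own argument (solve $-\div \defo v = f$ with $v \in \rmH^1_0(U,\bbV)$ via Korn and Lax--Milgram, then take $s = -\defo v$). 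There is a cleaner route that avoids the corrector entirely: the commutation identity you already verified gives $\Skew \circ \curl = \div \circ \transp$ on $\rmH^1(U,\bbV)$, and $\div : \rmH^1(U,\bbV^\transp) \to \rmH^0(U,\bbR)$ is onto (the standard fact you are already using for the top row), so given $a \in \rmH^0(U,\bbR)$ pick $v \in \rmH^1(U,\bbV)$ with $\div v^\transp = a$ and set $u = \curl v$; then $\div u = 0$ and $\Skew u = a$, so $u$ is an admissible preimage lying in $\rmH^0_{\div}(U,\bbM)$. With that repair the argument is complete.
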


\begin{proposition} We have a commuting diagram:
\begin{equation} \label{eq:chasestrain}
\xymatrix{
\rmH^2(U, \bbV) \ar[r]^\grad & \rmH^1_{\sven}(U,\bbM) \ar[r]^\curl & \rmH^0_{\curl \transp}(U, \bbV),\\
\rmH^1(U, \bbR) \ar[r]^\grad \ar[ru]^\Skew & \rmH^0_{\curl}(U,\bbV^\transp) \ar[r]^\curl \ar[ru]^\transp & \rmH^0(U, \bbR).
}
\end{equation}
Here:
\begin{equation}
\rmH^1_{\sven}(U,\bbM) = \{u \in \rmH^1(U, \bbM) \ : \ \sven u \in \rmH^0(U, \bbR) \}.
\end{equation}
Moreover $\transp$ denotes transposition and:
\begin{equation}
\Skew [u] =  \left[ \begin{array}{rr}
0 & u \\
- u & 0
\end{array} \right].
\end{equation}
The elasticity strain complex (\ref{eq:elastrain}) comes from (\ref{eq:chasestrain}) by Proposition \ref{prop:chase}.
\end{proposition}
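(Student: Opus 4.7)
The plan is to split the claim into four ingredients and dispatch them in order: (i) well-definedness of the diagonals $\Skew$ and $\transp$ between the indicated Sobolev spaces, (ii) the complex property $\curl\grad = 0$ on each row, (iii) commutativity of the one BGG square $\transp\circ\grad = \curl\circ\Skew$, and (iv) the extraction of the strain complex from Proposition \ref{prop:chase} with $i=1$.

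Ingredients (i)--(iii) are short coordinate computations from the explicit formulas of the previous subsection. For $\Skew$, applying $\sven$ to $\begin{pmatrix}0 & u\\-u & 0\end{pmatrix}$ collapses by symmetry of mixed partials, giving $\sven\Skew(u)=0$; hence $\Skew$ lands in $\rmH^1_\sven(U,\bbM)$. For $\transp$, the identity $\curl((w^\transp)^\transp)=\curl w$ transfers the curl regularity tautologically, so $\transp$ lands in $\rmH^0_{\curl\transp}(U,\bbV)$. The two rows are complexes by Schwarz, checked rowwise on matrix fields for the top row. The single commutation square, tested on a scalar $u$, is verified by noting that both $\transp(\grad u)$ and $\curl(\Skew(u))$ equal the column $(\partial_1 u,\partial_2 u)^\transp$.

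For (iv), $\phi^0 = \Skew$ is injective with image the skew-symmetric matrix fields, and $\phi^1=\transp$ is a bijection of row fields onto column fields preserving the curl regularity. Beyond index $1$ both sequences terminate, so any further $\phi^j$ is vacuously surjective and Proposition \ref{prop:chase} applies with $i=1$. The cokernel of $\phi^0$ identifies canonically via the symmetric--skew decomposition with $\rmH^1_\sven(U,\bbS)$. The induced map from $\rmH^2(U,\bbV)$ into this cokernel sends $v$ to the class of $\grad v$; since $\grad v - \defo v$ is precisely the skew combination $\Skew(\tfrac12(\partial_2 v_1 - \partial_1 v_2))$, the induced map is $\defo$. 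The connecting operator $\bbd = \curl\circ\Psi\circ\curl$ with $\Psi = \transp^{-1}$ becomes $v \mapsto \curl((\curl v)^\transp)$.

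The main step will be verifying that $\curl\transp\curl$ agrees with $\sven$ on symmetric matrix fields. This is a direct but careful rowwise computation using the symmetry of mixed partials, and it is the single place where the specific regularity choice $\sven u \in \rmH^0$ in the definition of $\rmH^1_\sven(U,\bbM)$ gets tied down: with this identification $\bbd$ lands in $\rmH^0(U,\bbR) = \ker\phi^2$, identifying the last differential of the derived complex with $\sven$. This completes the identification of the sequence produced by Proposition \ref{prop:chase} with the strain complex (\ref{eq:elastrain}).
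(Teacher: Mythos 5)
Your proposal is correct. The paper states this proposition without proof, and your verifications --- well-definedness of $\Skew$ and $\transp$ with respect to the indicated regularities, the single commutation $\curl \circ \Skew = \transp \circ \grad$ on scalars, and the pointwise identity $\curl \transp \curl = \sven$ (which both shows $\curl$ lands in $\rmH^0_{\curl\transp}(U,\bbV)$ and identifies the connecting map $\bbd = \rmd\Psi\rmd$ with $\sven$ and $\coker\Skew$ with $\rmH^1_{\sven}(U,\bbS)$) --- are exactly the routine computations the authors leave implicit.
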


\begin{remark}[a variant with modified regularity]
\label{rem:chasestressbis}
On the bottom row of diagram (\ref{eq:chasestress}) the differential operators act row-wise and the Sobolev spaces are defined row-wise. The middle space in the bottom row is not stable under taking the transpose. One could achieve that by enforcing conditions on the vertical divergence throughout the complex:

\begin{equation} \label{eq:chasestressbis}
\xymatrix{
\rmH^2(U, \bbR) \ar[r]^\curl & \rmH^1_{\div}(U,\bbV^\transp) \ar[r]^\div & \rmH^1(U, \bbR),\\
\rmH^1_{\div \transp}(U, \bbV) \ar[r]^\curl \ar[ru]^\transp & \rmH^0_{\tiny
\div, \div \transp}(U,\bbM) \ar[r]^\div \ar[ru]^\Skew & \rmH^0(U, \bbV).
}
\end{equation}
Here:
\begin{align}
\rmH^1_{\div \transp}(U, \bbV) & = \{ u \in \rmH^1(U, \bbV) \ : \ \div u^\transp \in \rmH^1(U, \bbR) \},\\
\rmH^0_{\tiny \div, \div \transp}( U, \bbM) & = \{u \in \rmH^0( U, \bbM) \ : \ \div u \in \rmH^0(U, \bbV) \wedge  \div u^\transp \in \rmH^0(U, \bbV) \}.
\end{align}
The diagram chase applied to (\ref{eq:chasestressbis}) yields the same elasticity complex as (\ref{eq:chasestress}), namely (\ref{eq:elastress}). 
\end{remark}

\begin{remark}[regularity couples rows]
On the top-row of diagram (\ref{eq:chasestrain}) the differential operators act row-wise, but the Sobolev spaces have a regularity whose definition couples the rows. The middle space is stable under taking the transpose.
\end{remark}

\begin{remark}[Boundary conditions in the strain complex] Here we lower the regularity throughout the complex by 1, so that we consider in particular the space:
\begin{equation}
\rmH^0_{\sven}(U,\bbS) = \{ u \in \rmH^0(U, \bbS) \ : \ \sven u \in \rmH^{-1}(U, \bbR) \}.
\end{equation}
We are interested in finding the natural boundary conditions for this space. For $u$ in this space the $\tau\tau$ component on the boundary makes sense. Indeed, there is $u' \in \rmH^1(U, \bbS)$ such that $\sven u' = \sven u$ and $v \in \rmH^1(U, \bbV)$ such that $\defo v = u - u'$ (on noncontractible domains use the regular decomposition). Now $u'$ has well defined traces on the boundary, in particular $\tau \tau$ traces, say in $\rmL^2(\partial U)$. Moreover:
\begin{equation}
((\defo v) \tau) \cdot \tau = ((\grad v) \tau) \cdot \tau,
\end{equation}
and the trace of $(\grad v) \tau$ is well defined in $\rmH^{-1/2}(\partial U)$. But then, taking scalar product with $\tau$ is problematic. The components of $\tau$ are not in $\rmH^{1/2}(\partial U)$ on non-smooth domains, due to possible discontinuities. On a triangle one can still conclude that the trace of $((\defo v) \tau) \cdot \tau$ is well defined in the dual of $\rmH^{1/2}_{00}$ of each face. But it is problematic that this not enough to define $\tau \tau$ traces in, say, the dual of Lipschitz functions on the whole boundary. 

Characterizing the space of traces seems simpler for smooth boundaries. The difficulties with triangles can then be expressed as being related to the presence of infinite extrinsic curvature at vertices.

One should also prove that an element of $\rmH^0_{\sven}(U,\bbS)$ has zero $\tau \tau$ traces iff it is the limit of elements of $\rmC^\infty_c(U, \bbS)$.
\end{remark}

\begin{remark}[Strain complex with lower regularity]\label{rem:straincomplexlow}

As an alternative to the elasticity strain complex (\ref{eq:elastrain}) we can consider:
\begin{equation}\label{eq:elastrainlow}
\xymatrix{
\rmH^1_{\curl \transp} (U, \bbV) \ar[r]^\defo & \rmH^0_{ \tiny \curl, \sven}(U,\bbS) \ar[r]^\sven & \rmH^0(U, \bbR).
}
\end{equation}
Here:
\begin{align}
\rmH^1_{\curl \transp} (U, \bbV) & = \{ u \in \rmH^1(U, \bbV) \ : \ \curl u^\transp \in \rmH^1(U) \},\\
\rmH^0_{\tiny \curl, \sven}(U,\bbS) & = \{u \in \rmH^0(U, \bbS) \ : \ \curl u \in \rmH^0(U, \bbV) \wedge \sven u \in \rmH^0(U, \bbR) \}. \label{eq:slowreg}
\end{align}
This complex can be deduced from the following diagram:
\begin{equation} \label{eq:chasestrainlow}
\xymatrix{
\rmH^1_{\curl \transp} (U, \bbV) \ar[r]^\grad & \rmH^0_{\tiny \begin{matrix} \curl,  \curl \transp \\ \sven \end{matrix}}(U,\bbM) \ar[r]^\curl & \rmH^0_{\curl \transp}(U, \bbV),\\
\rmH^1(U, \bbR) \ar[r]^\grad \ar[ru]^\Skew & \rmH^0_{\curl}(U,\bbV^\transp) \ar[r]^\curl \ar[ru]^\transp & \rmH^0(U, \bbR).
}
\end{equation}
Here:
\begin{align}
\rmH^0_{\tiny \begin{matrix} \curl,  \curl \transp \\ \sven \end{matrix}} (U,\bbM)   = &  \{u \in \rmH^0(U, \bbM) \ : \curl u \in \rmH^0(U, \bbV) \wedge \\ 
&  \curl u^\transp \in \rmH^0(U, \bbV) \wedge \sven u \in \rmH^0(U, \bbR) \}.
\end{align}
One may check that the $\Skew$ operator on $\rmH^1(U, \bbR)$ maps \emph{onto} the antisymmetric elements of this space.
\end{remark}

\begin{remark}[regularity and partitions of unity]\label{rem:sobchoice}
The spaces defined in (\ref{eq:shighreg}) and (\ref{eq:slowreg}) are stable under multiplication by smooth functions. Thus, partition of unity techniques apply to them, enabling one to glue together smooth enough fields. The regularity in (\ref{eq:shighreg}) and (\ref{eq:slowreg}) seems quite adapted to FES techniques, which is about gluing together finite element fields. 

Notice that, on the other hand, the $\rmL^2$ based graph norm space:
\begin{equation}
 \{u \in \rmH^0(U, \bbS) \ : \ \sven u \in \rmH^0(U, \bbR) \},
\end{equation}
is not stable under multiplication by smooth functions. It seems less amenable to FES techniques.

We see that we have several choices of Sobolev norms in the strain complex. In the stress complex (\ref{eq:elastress}), on the other hand, the situation is simpler. The last two spaces are dictated by the Hellinger-Reissner principle, and this fixes the first space.

In dimension 3, this suggests the following two choices of norms, in the elasticity complex. For higher regularity:
\begin{equation}\label{eq:3delasticity}
\xymatrixcolsep{1.5cm}
\xymatrix{
\rmH^2(U, \bbV) \ar[r]^-\defo & \calX^1 \ar[r]^-\curltcurl & \rmH^0_{\div}(U,\bbS)  \ar[r]^-\div & \rmH^0(U, \bbV),
}
\end{equation}
with:
\begin{equation}
\calX^1 = \{u \in \rmH^1(U, \bbS) \ : \ \curltcurl u \in \rmH^{0}(U, \bbS)\}. 
\end{equation}
For lower regularity:
\begin{equation}
\xymatrixcolsep{1.5cm}
\xymatrix{
\rmH^1_{\curl \transp}(U, \bbV) \ar[r]^-\defo & \calY^1 \ar[r]^-\curltcurl & \rmH^0_{\div}(U,\bbS)  \ar[r]^-\div & \rmH^0(U, \bbV).
}
\end{equation}
with:
\begin{equation}
\calY^1 = \{u \in \rmH^0(U, \bbS) \ : \ \curl u \in \rmH^0(U, \bbM) \wedge \curltcurl u \in \rmH^{0}(U, \bbS)\}. 
\end{equation}
On the other hand, the complex:
\begin{equation}
\xymatrixcolsep{1.5cm}
\xymatrix{
\rmH^1(U, \bbV) \ar[r]^-\defo & \calZ^1 \ar[r]^-\curltcurl & \rmH^0_{\div}(U,\bbS)  \ar[r]^-\div & \rmH^0(U, \bbV),
}
\end{equation}
constructed on the $\rmL^2$ based graph norm space:
\begin{equation}
\calZ^1= \{u \in \rmH^0(U, \bbS) \ : \ \curltcurl u \in \rmH^0(U, \bbS) \},
\end{equation}
is avoided, as it is not stable under multiplication by smooth functions.

For Regge calculus, the functional framework of \cite{Chr11NM} was based on:
\begin{equation}
 \{u \in \rmH^0(U, \bbS) \ : \ \curltcurl u \in \rmH^{-1}(U, \bbS) \},
\end{equation}
which corresponds to lowering the regularity by $1$ throughout the complex (\ref{eq:3delasticity}).

Exactness properties of all these sequences can be deduced from results in \cite{ArnHu19}. 
\end{remark}

\subsection{Poincar\'e operators}

Poincar\'e and Koszul operators are central to the construction of the finite element de Rham complexes of \cite{Ned80}\cite{Hip99}\cite{ArnFalWin06}. They were also essential to the construction of the smooth finite element de Rham complexes in \cite{ChrHu18}. In \cite{ChrHuSan18} we provided similar operators for elasticity complexes. We detail them here, for the two different complexes we have in dimension 2. They will also be used for the construction of finite element elasticity complexes.

\paragraph{Stress elasticity complex.}

We suppose that the domains $U$ is starshaped with respect to the origin $0$. 

For $x=(x_1,x_2) \in \bbR^2$ we define $x^\rota =(-x_2,x_1)$. Both are identified with column vectors.

We define Poincar\'e operators for elasticity as follows.

\begin{definition}
We define $\poincare_1 : \Gamma(U,\bbS) \to \Gamma(U, \bbR)$ by:
\begin{equation}
(\poincare_1 u)(x)  = \int_0^1 (1-t) x^{\rota \transp} u(tx)x^\rota \rmd t.
\end{equation}

We define $\poincare_2 : \Gamma(U,\bbV) \to \Gamma(U, \bbS)$ by:
\begin{align}
(\poincare_2 u)(x) & = \sym (\int_0^1 t u(tx) x^\transp + \curl (\int_0^1 t(1-t) x^{\rota \transp} u(tx) x \rmd t)),
\end{align}
where the $\curl$ acts rowwise to produce a matrix.
\end{definition}

\begin{proposition}
We have the following identities. 
\begin{itemize}
\item Null-homotopy:
\begin{align}
\poincare_1 \airy u & = u - j(u),\\
\poincare_2 \div v + \airy \poincare_1 v & = v,\\
\div \poincare_2 w & = w.
\end{align}
where $j(u)$ is the affine function $x \mapsto  u(0) + \grad u (0) x$.

\item Sequence property:
\begin{equation}
\poincare_1 \poincare_2 = 0.
\end{equation}

\item These operators preserve polynomials, moreover:\\
-- $\poincare_1$ increases polynomial degree by $2$ (at most).\\
-- $\poincare_2$ increases polynomial degree by $1$ (at most).
\end{itemize}
\end{proposition}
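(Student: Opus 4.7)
The plan is to verify each claim by direct computation, using the substitution $u \mapsto u(tx)$ together with the chain rule $\frac{\rmd}{\rmd t} f(tx) = (\grad f)(tx) \cdot x$, followed by integration by parts in $t$. This is the standard strategy for Poincar\'e--Koszul homotopies on de Rham complexes, and here it is adapted to the elasticity sequence by the insertion of the algebraic factors involving $x$ and $x^\rota$ that encode the Airy/Saint-Venant pairing.

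For the first identity $\poincare_1 \airy u = u - j(u)$, I would begin from the algebraic observation
\begin{equation*}
x^{\rota \transp}\, \airy(u)(tx)\, x^\rota \;=\; x^\transp \hess(u)(tx)\, x \;=\; \frac{\rmd^2}{\rmd t^2}\, u(tx),
\end{equation*}
obtained by direct expansion, since the off-diagonal signs in $\airy$ exactly absorb the replacement of $x$ by $x^\rota$. Integrating $\int_0^1 (1-t) \frac{\rmd^2}{\rmd t^2}u(tx)\,\rmd t$ by parts twice produces $u(x) - u(0) - \grad u(0)\cdot x$, which is exactly $u - j(u)$. For $\div \poincare_2 w = w$, I would take the row-wise divergence of the two summands of $\poincare_2 w$: the divergence of $\sym\!\int_0^1 t\, w(tx)\, x^\transp\,\rmd t$ yields, via the product rule, $\int_0^1 \frac{\rmd}{\rmd t}(t^2 w(tx))\,\rmd t = w(x)$ together with symmetrisation error terms, and these errors are cancelled by the divergence of the $\curl$-correction, using the orthogonality $x^\transp x^\rota = 0$ and the commutation of $\div$ with differentiation under the integral.

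The middle identity $\poincare_2 \div v + \airy \poincare_1 v = v$ is the main obstacle. I would expand $\airy \poincare_1 v$ by differentiating the weight $x^{\rota\transp} v(tx) x^\rota$ twice in $x$ under the integral, expand $\poincare_2(\div v)$ by substituting $w = \div v$, and then collect terms. The coefficients $t$ and $t(1-t)$ in the definition of $\poincare_2$ were engineered so that, after integration by parts in $t$, the sum telescopes to $v(x)$; this is a bookkeeping exercise in the chain and product rules, but it is the step where the structure of the whole identity really has to fit together, and it is the main technical obstacle. The sequence property $\poincare_1 \poincare_2 = 0$ then reduces to two cancellations: the $\sym$-part gives zero because $x^{\rota \transp}\sym(v(tsx)\,(sx)^\transp)\,x^\rota$ vanishes by $x^\transp x^\rota = 0$, and the $\curl$-correction contribution vanishes after integration by parts against the quadratic test-polynomial $x \mapsto x^{\rota\transp}(\cdot)\,x^\rota$, again by the same orthogonality.

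Finally, the polynomial preservation and degree bounds follow by inspection of the integrands. For a polynomial $u$ of degree $d$, $u(tx)$ is a polynomial of degree $d$ in $x$ with coefficients depending polynomially on $t$, and integration in $t$ preserves this degree. Each factor of $x$ or $x^\rota$ raises the degree by one, and each row-wise $\curl$ lowers it by one. The definition of $\poincare_1$ inserts two factors of $x^\rota$ and no derivative, so the degree increases by $2$; the definition of $\poincare_2$ either inserts a single factor of $x^\transp$ (the $\sym$-summand) or two $x$-type factors followed by a $\curl$ (the correction summand), giving a net increase of $1$ in both cases.
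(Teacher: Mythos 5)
The paper itself does not prove this proposition; it is stated without proof and the reader is implicitly referred to \cite{ChrHuSan18}. So your attempt can only be judged on its own merits. Your overall strategy (substitute $u(tx)$, use the chain rule, integrate by parts in $t$) is the right one, and two parts are genuinely done: the identity $x^{\rota\transp}\airy(u)(tx)\,x^\rota = x^\transp\hess(u)(tx)\,x = \tfrac{\rmd^2}{\rmd t^2}u(tx)$ is correct (it follows from $\airy(u)=J^\transp\hess(u)J$, $x^\rota=Jx$ and $J^2=-I$), and the double integration by parts against the weight $(1-t)$ does give $u - j(u)$; the degree counts are also right (each factor of $x$ or $x^\rota$ raises the degree by one, the row-wise $\curl$ lowers it by one). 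The vanishing of the $\sym$-part in $\poincare_1\poincare_2$ via $x^\transp x^\rota=0$ is correct as well, though the curl-correction term actually vanishes pointwise (writing $b(y)=\beta(y)y$ and using $\curl b=(\grad b)J$, $Jx^\rota=-x$, one gets a factor $x^{\rota\transp}x=0$ directly), not ``after integration by parts'' as you say.

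The genuine gap is the middle null-homotopy identity $\poincare_2\div v+\airy\poincare_1 v=v$. You correctly identify it as the main obstacle and then assert that the coefficients $t$ and $t(1-t)$ ``were engineered so that the sum telescopes''; that is a statement of the conclusion, not a proof. Note that the outer identities $\poincare_1\airy=1-j$ and $\div\poincare_2=1$ together with $\div\circ\airy=0$ do \emph{not} formally imply the middle one, so it requires its own computation. Your sketch of $\div\poincare_2 w=w$ has the same weakness: the unsymmetrized term $\int_0^1 t\,w(tx)x^\transp\,\rmd t$ already has divergence exactly $w$ (no error terms), so what must actually be checked is that $\div$ of the transposed part plus $\div$ of the transposed curl-correction again gives $w$; invoking ``the orthogonality $x^\transp x^\rota=0$'' does not obviously do this, since second derivatives of $\beta(y)y$ appear. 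A practical way to close both gaps, consistent with the material at the end of the paper, is to verify the identities on homogeneous polynomials of each degree $r$, where $\poincare_1,\poincare_2$ reduce to the Koszul operators $\koszul_1^r,\koszul_2^r$ with explicit rational coefficients and the verification is a finite algebraic computation; the general case then follows by Taylor expansion with integral remainder. As it stands, the proposal is an accurate plan with the central step missing.
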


With these we may define two koszul operators.

We may remark that on constant fields these operators reduce to:
\begin{align}
\poincare_1 u (x) & = 1/2 x^\rota u x^\rota,\\
\end{align}
and:
\begin{equation}
\poincare_2 u (x) =   2/3   \left[\begin{array}{cc}
x_1 u_1 &  1/2(x_2 u_1 + x_1 u_2)\\
1/2(x_1 u_2 + x_2 u_1) & x_2 u_2
\end{array}\right] 
\end{equation}

\paragraph{Strain elasticity complex.}

For the strain elasticity complex we also have Poincar\'e operators (we use the same notation). We suppose that the domain $U$ is starshaped with respect to the origin $0$. In practice we will use these operators with respect to other base points, namely the central vertex of the Clough-Tocher split.

\begin{definition}
We define $\poincare_1 : \Gamma(U, \bbS) \to \Gamma(U, \bbV)$ by:
\begin{equation}
\poincare_1 u = \int_{0}^{1}u_{tx}^\transp {x} \rmd t +\int_{0}^{1}(1-t)x^{\rota} (\curl u)^\transp_{tx} x \rmd t,
\end{equation}
and $\poincare_2 : \Gamma(U, \bbR) \to \Gamma(U, \bbS)$ by:
\begin{equation}
\poincare_{2}u = {x}^{\rota} \left (\int_{0}^{1}t(1-t)u_{tx} \rmd t \right ) {x}^{\rota \transp}.
\end{equation}
\end{definition}

\begin{proposition} The Poincar\'e operators have the following properties:
\begin{itemize}
\item Null-homotopy:
We have, for $u\in \Gamma(U, \bbV)$, $v \in \Gamma(U, \bbS)$ and $w \in \Gamma(U, \bbR)$:
\begin{align}
\poincare_{1} \defo u  & = u - j(u),\\
\poincare_{2} \sven v + \defo \poincare_{1} v & = v,\\
\sven \poincare_{2}  w & = w,
\end{align}
where $j(u)$ is the rigid motion $x \mapsto u(0) + 1/2 \curl \transp u (0) x^\rota$. 
\item Sequence property:
\begin{equation}
\poincare_1 \poincare_2 = 0.
\end{equation}

\item These operators preserve polynomials, moreover:\\
-- $\poincare_1$ increases polynomial degree by $1$ (at most).\\
-- $\poincare_2$ increases polynomial degree by $2$ (at most).
\end{itemize}
\end{proposition}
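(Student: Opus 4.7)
The plan is to establish all three null-homotopy identities by direct calculation, using the fundamental theorem of calculus along rays, $g(x)-g(0)=\int_{0}^{1}(\grad g)(tx)\,x\,\rmd t$, combined with differentiation under the integral and integration by parts in the $t$-variable; once the null-homotopy identities are in hand, the sequence property and the polynomial degree bounds follow from them almost directly.

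I would start with the cleanest formula, $\sven \poincare_{2} w = w$. Writing $\poincare_{2}w = F(x)\,x^{\rota} x^{\rota\transp}$ with $F(x)=\int_{0}^{1}t(1-t)\,w(tx)\,\rmd t$, I would use the Leibniz-type rule for $\sven$ together with the identity $\sven(x^{\rota}x^{\rota\transp})=0$ to reduce $\sven \poincare_{2}w$ to an expression involving only second derivatives of $F$; exploiting the weight $t(1-t)$, whose boundary values vanish, integration by parts twice in $t$ telescopes this expression back to $w(x)$.

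Next I would prove $\poincare_{1}\defo u = u - j(u)$, which is a variant of the classical Cesaro-Volterra path integral. The first integrand $(\defo u)_{tx}^{\transp}x=\sym(\grad u)(tx)\,x$ captures the symmetric half of $\grad u(tx)\,x=\tfrac{\rmd}{\rmd t}u(tx)$, so it recovers $\tfrac12(u(x)-u(0))$ up to an antisymmetric remainder proportional to $(\curl u)(tx)$; the second integrand, containing $\curl \defo u = \tfrac12\grad\curl u$, is engineered to absorb that antisymmetric defect and, after one integration by parts in $t$, supplies both the missing $\tfrac12(u(x)-u(0))$ and the linear correction $\tfrac12(\curl u)(0)\,x^{\rota}$ that makes up $j(u)$. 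The middle identity $\poincare_{2}\sven v + \defo \poincare_{1}v = v$ is then proved along the same lines: compute $\defo \poincare_{1}v$ by the product rule, interchange $\rmd/\rmd t$ with $\partial_{x}$, and identify the residual $v-\defo\poincare_{1}v$ with $\poincare_{2}\sven v$. This is the step I expect to be the main obstacle, as one must track the two parts of $\poincare_{1}$ simultaneously and match the $\sven v$ that emerges from $\curl\curl$-type contractions to the explicit integral defining $\poincare_{2}$; the crucial trick is an integration by parts in $t$ that converts the factor $(1-t)$ appearing in $\poincare_{1}$ into the weight $t(1-t)$ characteristic of $\poincare_{2}$.

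Finally, the sequence property $\poincare_{1}\poincare_{2}=0$ follows by applying the middle identity to $v=\poincare_{2}w$: since $\sven \poincare_{2}w = w$, we obtain $\defo\poincare_{1}\poincare_{2}w=0$, so $\poincare_{1}\poincare_{2}w$ is a rigid motion, and direct inspection of the defining integral for $\poincare_{1}$ at $x=0$ shows both its value and its $\curl$ vanish there, forcing the rigid motion to be identically zero. The polynomial degree bounds are immediate from the integrands: in $\poincare_{1}v$ the first summand $v_{tx}^{\transp}x$ raises the degree by $1$, while the second summand combines $\curl$ (lowering by $1$) with the factor $x^{\rota}(\cdot)x$ (two factors of $x$, raising by $2$), giving net $+1$; in $\poincare_{2}w$ the two factors of $x^{\rota}$ raise the degree by $2$, and in both cases the $t$-integrals merely contribute bounded constants and are therefore innocuous.
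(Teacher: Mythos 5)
The paper itself does not prove this proposition (it defers to the cited reference on Poincar\'e operators for elasticity), and your overall strategy --- direct computation along rays via $g(x)-g(0)=\int_0^1(\grad g)(tx)\,x\,\rmd t$, integration by parts in $t$, then deducing $\poincare_1\poincare_2=0$ from the null-homotopy identities and the degree counts from the integrands --- is exactly the natural one. Your argument for the sequence property (that $\defo\poincare_1\poincare_2 w=0$ forces $\poincare_1\poincare_2 w$ to be a rigid motion, which vanishes because $\poincare_1\poincare_2 w=O(|x|^2)$ near the origin) is clean and correct, as are the polynomial degree bounds.

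However, two of your concrete intermediate claims are false and would derail the computation if followed literally. First, $\sven(x^{\rota}x^{\rota\transp})\neq 0$: with $x^{\rota}x^{\rota\transp}=\bigl(\begin{smallmatrix}x_2^2 & -x_1x_2\\ -x_1x_2 & x_1^2\end{smallmatrix}\bigr)$ one finds $\sven(x^{\rota}x^{\rota\transp})=6$, so $\sven\poincare_2 w$ does \emph{not} reduce to second derivatives of $F$ alone; writing $g(t)=w(tx)$ it equals $\int_0^1\bigl[t^3(1-t)g''+6t^2(1-t)g'+6t(1-t)g\bigr]\rmd t$, and it is precisely the interplay of all three terms under two integrations by parts that makes everything cancel except the boundary term $g(1)=w(x)$. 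Dropping the lower-order terms leaves the nonzero remainder $\int_0^1(6t-12t^2)g\,\rmd t$. Second, your bookkeeping for $\poincare_1\defo u$ is off: since $\sym(\grad u)(tx)\,x=\tfrac{\rmd}{\rmd t}u(tx)-\tfrac12(\curl\transp u)(tx)\,x^{\rota}$, the \emph{first} integral already yields the full $u(x)-u(0)$ minus the skew defect $\tfrac12 x^{\rota}\int_0^1(\curl\transp u)(tx)\,\rmd t$; the second integral, which only sees $\curl\defo u=\tfrac12\grad(\curl\transp u)^\transp$, cannot produce any part of $u(x)-u(0)$ --- after one integration by parts it exactly cancels the skew defect and leaves the constant $-\tfrac12(\curl\transp u)(0)\,x^{\rota}$ of $j(u)$. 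Finally, the middle identity $\poincare_2\sven v+\defo\poincare_1 v=v$, which you rightly identify as the main obstacle, is only described and not actually verified; given that the analogous accounting went wrong in the two identities you did sketch, this step cannot be accepted on the strength of the outline alone.
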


\begin{definition}[Koszul operators]
Taking the Poincar\'{e} operators on homogeneous polynomials of degree $r$, we obtain the Koszul operators. More generally, we let the operator $\koszul_{1}^{r}: \Gamma(U, \mathbb{S}) \mapsto \Gamma(U, \mathbb{V})$ be given by:
\begin{equation}
\koszul_{1}^{r}u = \frac{1}{r+1} u {x}+\frac{1}{(r+1)(r+2)}{x}^\rota (\curl u)^\transp {x},
\end{equation}
whereas the operator $\koszul_{2}^{r}: \Gamma(U, \bbR) \to \Gamma(U, \bbS)$ is given by:
\begin{equation}
\koszul_{2}^{r} u = \frac{1}{(r+2)(r+3)}{x}^{\rota}u  {x}^{\rota \transp}.
\end{equation}
\end{definition}

The next lemma shows some regularity of these operators.
\begin{lemma}\label{lem:kosreg}
For $w$ piecewise smooth on $\calR(T)$, we have $\poincare_{2} w \in \rmH^0_{\curl}(T, \bbS)$.
\end{lemma}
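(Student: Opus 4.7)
The goal is to verify that $\poincare_2 w$, defined by $\poincare_2 w(x) = x^\rota f(x)\, x^{\rota \transp}$ with the scalar $f(x)=\int_0^1 t(1-t)\,w(tx)\,\rmd t$, lies in $\rmH^0_{\curl}(T,\bbS)$. Since $\curl$ is taken row-wise, my plan is to exploit the standard criterion: for a piecewise smooth symmetric matrix field on $\calR(T)$, the row-wise curl lives in $\rmL^2$ iff no Dirac-type contributions appear on the interior edges of the refinement, equivalently iff the tangential trace $u\tau$ of each row is single-valued across every interior edge.

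First I would check that the only possible obstruction lives on those interior edges. The Clough-Tocher subcell $T_i$ has the origin as a vertex and is star-shaped from it, so for any $x$ in the open subcell the whole ray $\{tx:t\in[0,1]\}$ stays inside $T_i$. Therefore $t\mapsto w(tx)$ is smooth on $[0,1]$, $f$ is smooth on each open $T_i$, and consequently $\poincare_2 w$ is smooth on each $T_i$. Moreover $\poincare_2 w(x)=O(|x|^2)$ near the origin, so no singularity is introduced at the central vertex. Only the three interior edges of $\calR(T)$ are candidates for carrying a jump in $\poincare_2 w$.

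The heart of the argument is a one-line computation on each interior edge. Such an edge $e$ emanates from the origin with some tangent direction $\tau$, so its points are of the form $x = s\tau$ with $s \geq 0$. Then $x^\rota = s\tau^\rota$ and, since $\tau^{\rota\transp}\tau = 0$, one gets
\begin{equation}
(\poincare_2 w)(x)\,\tau \;=\; f(x)\, x^\rota\,(x^{\rota\transp}\tau) \;=\; s^2 f(x)\,\tau^\rota\,(\tau^{\rota\transp}\tau) \;=\; 0,
\end{equation}
no matter which of the two subcells adjacent to $e$ is used to compute the limit of $f(x)$. Hence $(\poincare_2 w)\tau$ vanishes identically as an edge trace from both sides, and the tangential traces of every row agree across every interior edge of $\calR(T)$.

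Assembling these observations, the piecewise (strong) curl of $\poincare_2 w$, computed on each subcell $T_i$, glues to a genuine $\rmL^2$ vector field on $T$ with no singular part supported on the interior skeleton. Since $\poincare_2 w$ is symmetric by construction (as $x^\rota x^{\rota\transp}$ is) and evidently in $\rmL^2(T,\bbS)$, this gives $\poincare_2 w \in \rmH^0_{\curl}(T,\bbS)$. The potential obstacle I was watching for — that $f$ really can jump across the interior edges when $w$ does — is neutralised by the algebraic identity $\tau^{\rota\transp}\tau=0$, which reflects the fact that the Koszul-type factor $x^\rota x^{\rota\transp}$ is designed precisely to kill the tangential component along rays through the base point.
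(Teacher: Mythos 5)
Your proof is correct and rests on exactly the same key fact as the paper's one-line argument: the Koszul factor $x^{\rota}x^{\rota\transp}$ annihilates the tangent direction along any ray through the base point (the identity $(x^{\rota}x^{\rota\transp})x=0$, i.e.\ $\tau^{\rota\transp}\tau=0$), so the tangential trace of $\poincare_2 w$ vanishes from both sides of each interior edge of $\calR(T)$ and the row-wise distributional $\curl$ picks up no singular part; you simply spell out the jump-condition reasoning and the piecewise smoothness of the smeared coefficient that the paper leaves implicit. The only extra item in the paper's proof is the companion identity $\transp\curl(x^{\rota}x^{\rota\transp})\,x=0$, which is what is needed when the lemma is later invoked for the property $\partial_\nu \omega\tau\cdot\tau=0$ on interior edges (integrability of $\sven$); your argument covers the $\curl$ half, which is all the statement literally asserts.
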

\begin{proof}
By straightforward calculations, $(x^{\rota} x^{\rota \transp}) x=0$ and $ \transp \curl (x^{\rota} x^{\rota \transp}) x=0$. This implies that $wx^{\rota \transp} x^{\rota}$ and $w \curl(x^{\rota \transp} x^{\rota})$ have continuous tangential components even if $w$ is discontinuous across of the interior edges of the Clough-Tocher split.
\end{proof}

\section{FES for the stress complex \label{sec:fesstress}}

\subsection{Induced operators and discrete vectorbundle}

We consider a vertex $V$, in an edge $E$, in a triangle $T$. The oriented unit tangent on $E$ is denoted $\tau$ and the normal is denoted $\nu$, so that $(\tau , \nu)$ is an oriented orthonormal basis of $\bbV$.

Spaces and operators for the stress complex may be arranged in the following commuting diagram:

\begin{tikzpicture}

\matrix (L) [matrix of math nodes, row sep = 2.5cm, column sep = 2cm] {
\Gamma(T, \bbR) & \Gamma(T, \bbS) & \Gamma(T, \bbV) \\
\Gamma(E, \bbR \times \bbR) & \Gamma(E, \bbR \times \bbR) \\
\Gamma(V, \bbR \times \bbV)\\
};

\path[->] 
(L-1-1) edge node[above] {$\airy$} (L-1-2)
(L-1-2) edge node[above] {$\div $} (L-1-3)
(L-2-1) edge (L-2-2)
(L-1-1) edge (L-2-1)
(L-2-1) edge (L-3-1)
(L-1-2) edge (L-2-2)
(L-1-1) edge [out= -125, in = 125]  node [left]{\footnotesize $\begin{array}{c}
u \\ \downarrow \\ (u, \transp \grad u) \end{array}$} (L-3-1)
;

\draw (L-1-2) + (0, -0.7) node [below right] {\footnotesize $\begin{array}{l}
\quad u \\ \quad \downarrow \\ (u\nu \cdot \tau, u\nu \cdot \nu) \end{array}$};

\draw (L-1-1) + (0, -0.7) node[below right] {\footnotesize $\begin{array}{l} 
\quad u\\ \quad \downarrow \\ (u, \partial_\nu u)\end{array}$};

\draw (L-2-1) + (0, -0.7) node[below right] {\footnotesize $\begin{array}{l}
(u,v)  \rightarrow  (- \partial_\tau v, \partial_\tau^2 u) \\
\quad \downarrow\\
 (u, \partial_\tau u \tau + v \nu) 
\end{array}$};

\end{tikzpicture}

\begin{remark}
With the preceding notations we have that:
\begin{itemize}
\item $K(T)$ consists of affine functions.
\item $K(E)$ consists of pairs $(u,v)$ of realvalued functions on $E$ such that $v$ is constant and $u$ is affine.
\item $K(V) = \bbR \times \bbV$.
\end{itemize}
\end{remark}

To complete the picture we need to define a discrete vector bundle. We take the dual point of view developed in \S \ref{par:dualvb} page \pageref{par:dualvb}.
 
\begin{itemize}
\item We let $M(T)$ denote the space of rigid motions. The rigid motions appear as the kernel of the formal adjoint of $\div : \Gamma(T, \bbS) \to \Gamma(T, \bbV)$, acting on symmetric matrices, namely the deformation operator $\defo$.

\item We let $M(E)$ be the kernel of the (formal) adjoint of $\diff^0_E : \Gamma(E, \bbR \times \bbR) \to \Gamma(E, \bbR \times \bbR)$, which is: 
\begin{equation}
(u,v) \mapsto (\partial^2_\tau v, \partial_{\tau} u).
\end{equation}
Therefore $M(E)$ consist of pairs $(u,v)$ of functions on $E$, where $u$ is constant and $v$ is affine.
 
\item We let $M(V) = \bbR \times \bbV$.
\end{itemize}

We define bijective restriction operators $M(T) \to M(E)$ and $M(E) \to M(V)$  as follows. 

\begin{itemize}
\item We define $M(T) \to M(E)$ as the map sending a rigid motion  $\phi$ to the pair $(u\cdot \tau, u \cdot \nu)$, where restriciton to $E$ is implied. Indeed the tangent component of $\phi$ on $E$ is constant and the normal component on $E$ is affine.

\item We define $M(E) \to M(V)$ as the map sending $(\phi,\psi) \in M(E)$ to the pair $(-\partial_\tau \psi, \psi \tau - \phi \nu)$, evaluated at $V$, which is in $M(V)$. 

\item There is one commuting diagram to check, when $V$ is the common vertex of two edges $E$ and $E'$, of $T$, namely that the two compositions $M(T) \to M(E) \to M(V)$ and $M(T) \to M(E') \to M(V)$ are equal. But, the composed restriction $M(T) \to M(E) \to M(V)$ is:
\begin{equation}
 \phi \mapsto (-\partial_\tau (\phi \cdot \nu), (\phi \cdot \nu) \tau - (\phi \cdot \tau)\nu ) = (1/2\curl \phi (V), - J \phi (V)),
\end{equation}
which is independent of $E$. 
\end{itemize}

We need bilinear pairings.
\begin{itemize}
\item On $\Gamma(T, \bbV) \times M(T)$ we define $\langle u, \phi \rangle_T = \int_T u \cdot \phi$.
\item On $\Gamma(E, \bbR \times \bbR) \times M(E)$ we define $\langle (u,v), (\phi, \psi) \rangle_E = \int_E u \phi + v \psi$.
\item On $\Gamma(V, \bbR \times \bbV) \times M(V)$ we define $\langle (u,v), (\phi, \psi) \rangle_V = u \phi + v \cdot \psi$.
\end{itemize} 

We also need two Stokes-like identities. They are:

\begin{itemize}
\item If $u \in \Gamma(T, \bbS)$ and $\phi \in M(T)$, we write:
\begin{align}
\int_T \div u \cdot \phi & = \int_{\partial T}  u \nu \cdot \phi,\\
& = \int_{\partial T} (u \nu \cdot \tau)(\phi \cdot \tau)  +  (u \nu \cdot \nu)(\phi \cdot \nu).
\end{align}
\item If $(u,v) \in \Gamma(E, \bbR \times \bbR)$ and $(\phi,\psi)\in M(E)$, we have:
\begin{align}
\int_E \diff^0_E (u,v) \cdot (\phi,\psi) &  = \int_E (- \partial_\tau v)\phi  + (\partial_\tau^2 u)\psi,\\
& = \lsb \partial_\tau u \psi - u \partial_\tau \psi - v \phi \rsb,\\
& = \lsb u(-\partial_\tau \psi)   + (\partial_\tau u \tau + v \nu) \cdot (\psi \tau - \phi \nu) \rsb.
\end{align}
\end{itemize}


\begin{lemma}\label{lem:kerneldofstress}\mbox{}
\begin{itemize}
\item For any triangle $T$, the complex $\calC^\bs(\subcells(T), M^\star)$ is exact except at index $0$ where the kernel can be characterized as follows. For $u \in \calC^0(\subcells(T), M^\star)$ we have $\delta_\trans u = 0$ iff $u$ represents the degrees of freedom of an affine function -- i.e. there exists an affine function $v$ on $T$, such that $u$ and $v$ evaluate similarly against $M(V)$ for each vertex $V$ of $T$.
\item  For any edge $E$, the complex $\calC^\bs(\subcells(E), M^\star)$ is exact except at index $0$ where the kernel can be characterized as follows. For $u \in \calC^0(\subcells(T), M^\star)$ we have $\delta_\trans u = 0$ iff $u$ represents the degrees of freedom of the restriction of an affine function -- i.e. there exists an affine $v$ on $\bbR^2$, such that $u$ and $v$ evaluate similarly against $M(V)$ for each vertex $V$ of $E$.
\end{itemize}
\end{lemma}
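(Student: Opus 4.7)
The plan is to reduce both bullets to Corollary \ref{cor:cohlt}, which already handles exactness in positive degrees and identifies the index-$0$ kernel with $L(T) = M(T)^\star$ (respectively $L(E) = M(E)^\star$) as an abstract vector space. The content left to prove is then a concrete identification of this kernel with the space of affine functions, realized through the evaluation maps at the vertices.

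First I would record the relevant dimensions. Rigid motions form a $3$-dimensional space, so $\dim L(T) = 3$; and $M(E)$, being pairs $(\phi,\psi)$ with $\phi$ constant and $\psi$ affine on $E$, is also $3$-dimensional, so $\dim L(E)=3$. Affine functions on $\bbR^2$ form a $3$-dimensional space as well, which matches. Next I would define a map $\Phi$ from affine functions on $\bbR^2$ into $\calC^0(\subcells(S),L)$ (for $S=T$ or $S=E$) by $\Phi(v)(V) = \eval_V(v)$ at each vertex $V$ of $S$; under the pairing of \S \ref{par:dualvb}, this is the linear functional $(\phi,\psi) \mapsto v(V)\phi + \transp\grad v(V)\cdot \psi$ on $M(V) = \bbR \times \bbV$.

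Second, I would verify that $\Phi(v)$ is a cocycle for affine $v$, using the Stokes identity (\ref{eq:Stokes}). On each edge $E'$ one has $(\delta^0_\trans \Phi(v))(E') = \eval_{E'}(\diff^0_{E'}\rest_{E'\, \sdot}v)$. The restriction to $E'$ of an affine $v$ is $(v|_{E'},\partial_\nu v|_{E'})$, whose tangential component is affine and normal component is constant, so the induced edge differential $(a,b)\mapsto(-\partial_\tau b,\partial_\tau^2 a)$ kills it. Hence $\delta^0_\trans \Phi(v)=0$. For injectivity, if $\Phi(v)(V)=0$ for every vertex $V$ of $S$, the pairing forces $v(V)=0$ and $\grad v(V)=0$; since $\grad v$ is constant for affine $v$, this makes $\grad v \equiv 0$, and then $v(V)=0$ at any vertex yields $v\equiv 0$. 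Together with the dimension count, $\Phi$ is an isomorphism onto $\ker \delta^0_\trans$, which is exactly the assertion of each bullet.

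The main obstacle, which is really just bookkeeping, is tracking the identification $L(V)\cong M(V)^\star$ consistently across $V\subcell E\subcell T$ and checking that the transports $\trans_{EV} = s_{VE}^\star$ and $\trans_{TE}=s_{ET}^\star$ indeed align with the induced edge and face differentials in the way needed for the Stokes identity to produce the cocycle condition on restrictions of affine functions. Once this is done at the level of one edge (verifying $\diff^0_{E'}$ annihilates restrictions of affine functions), the rest of the argument is purely dimensional and follows immediately from Corollary \ref{cor:cohlt}.
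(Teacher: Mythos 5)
Your proof is correct and takes essentially the same route as the paper's: invoke Corollary \ref{cor:cohlt} for exactness in positive degrees and for the fact that the index-$0$ kernel has dimension $3$, exhibit the affine functions as injecting into that kernel, and conclude surjectivity by equality of dimensions. You merely make explicit the two steps the paper compresses into ``the affine functions naturally inject into this kernel,'' namely the cocycle verification via the Stokes identity (the induced edge differential $(a,b)\mapsto(-\partial_\tau b,\partial_\tau^2 a)$ annihilates $(v|_{E'},\partial_\nu v|_{E'})$ for affine $v$) and the injectivity of vertex evaluation on affine functions.
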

\begin{proof}
From Corollary \ref{cor:cohlt}. At index $0$ the kernel of the discrete covariant exterior derivative has dimension 3. On the other hand the affine functions naturally inject into this kernel. Surjectivity then follows from dimension equality.
\end{proof}

\subsection{Discrete spaces}
We now define finite element spaces for the stress complex, following \cite{JohMer78}. Our main novely is that by highlighting degrees of freedom involving the $M(T)$ spaces, we are led to a natural reduction of this space (where stresses have dimension 9 on an element, reduced from 15), see Remark \ref{rem:minjm}. Notice that in \cite{JohMer78}, the displacement is chosen in the finite element space $\rmP^1(T, \bbV)$ rather than $\rmP^0(\calR(T), \bbV)$. Thus their finite element pair is not part of a complex. A point of view emphasizing discrete complexes is developed in \cite{ArnDouGup84}.

For a triangle $T$ we denote by $\calR(T)$ the Clough-Tocher split of $T$.

\begin{definition}[FE for the stress complex]\mbox{}
\begin{itemize}
\item We define:
\begin{equation}
A^0(T)  = \rmC^1 \rmP^3 (\calR(T), \bbV),
\end{equation}
$A^0(T)$ is thus  the Clough-Tocher space. 
The degrees of freedom are:
values at vertices ($3$), values of the gradient at vertices ($3 \times 2$), integral of normal derivative on edges ($3$). 

\item We define, following \cite{JohMer78}:
\begin{equation}
A^1(T) =  \rmH^0_{\div} \rmP^1(\calR(T), \bbS),
\end{equation}
by which we mean symmetric matrix fields that are piecewise polynomial of degree at most $1$, with continuous normal components on interior edges. 
The degrees of freedom are: on edges E, $\int_E u\nu \cdot v$ for $v \in \rmP^1(E, \bbV)$ ($3 \times 4$), and on $T$, integration against $\rmP^0(T, \bbS)$ ($3$).
\item We define:
\begin{equation}
A^2(T)= \rmP^0(\calR(T), \bbV),
\end{equation}
that is the space of piecewise constant vector fields. The degrees of freedom are integration against $\rmP^1(T, \bbV)$ ($6$).
\end{itemize}
\end{definition}
These finite element spaces are represented in Figure \ref{fig:stress}.
\begin{figure}
\includegraphics[width = \textwidth]{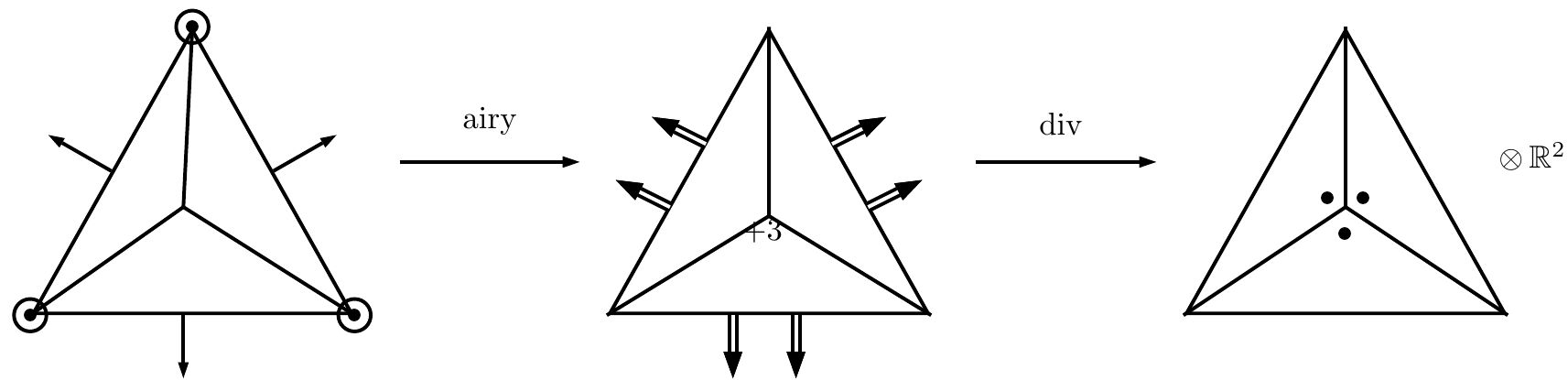}
\caption{Johnson Mercier stress complex. \label{fig:stress}}
\end{figure}

\begin{remark}\label{rem:mdofjm}
Notice that:
\begin{itemize}
\item for $A^0(T)$, pairings of restrictions with $M(V)$ at vertices $V$ can be recovered from the DoFs.
\item for $A^1(T)$, pairings of restrictions with $M(E)$ at edges $E$ can be recovered from the DoFs.
\item for $A^2(T)$, pairings with $M(T)$ can be recovered from the DoFs.
\end{itemize}
\end{remark}

\begin{theorem}[FE for the stress complex] \label{theo:stresscomplex} \mbox{}
\begin{itemize}
\item $A^0(T)$ has dimension $12$ and the provided DoFs are unisolvent.

\item $A^1(T)$ has dimension $15$ and the provided DoFs are unisolvent.

\item $A^2(T)$ has dimension $6$ and the provided DoFs are unisolvent.

\item The complex $A^\bs(T)$ is a resolution of the affine functions.
\end{itemize}
\end{theorem}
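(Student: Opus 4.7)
\emph{Dimensions and unisolvence.} For $A^0(T)$ this is the classical scalar Clough--Tocher element, for which unisolvence of the $3+6+3=12$ listed degrees of freedom is standard. For $A^2(T) = \rmP^0(\calR(T),\bbV)$ the dimension is $6$, and unisolvence against $\rmP^1(T,\bbV)$ reduces to the fact that evaluation at the three sub-triangle centroids is an isomorphism $\rmP^1(T,\bbR) \to \bbR^3$, which holds since the centroids are not collinear. For $A^1(T)$ the piecewise-$\rmP^1$ count on $\calR(T)$ is $3 \cdot 9 = 27$; normal continuity across each of the three interior edges imposes at most $4$ linear conditions (both $\nu \cdot u\nu$ and $\tau \cdot u\nu$ being $\rmP^1$ on the edge), so $\dim A^1(T) \geq 27-12=15$. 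Unisolvence of the $3 \cdot 4 + 3 = 15$ Johnson--Mercier degrees of freedom, as in \cite{JohMer78}, then yields both $\dim A^1(T) = 15$ and the independence of the normal-continuity constraints.

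\emph{Complex property.} On each sub-triangle $\div \airy = 0$ is a smooth pointwise identity. To see that $\airy u \in A^1(T)$ when $u \in A^0(T)$, write on any interior edge of $\calR(T)$ (with tangent $\tau$, normal $\nu$, and $J$ the $\pi/2$-rotation) the formula $(\airy u)\nu = J \partial_\tau \grad u$; since $u$ is $\rmC^1$, $\grad u$ is single-valued across the edge, hence so is its tangential derivative, and $(\airy u)\nu$ is continuous. Since $A^2(T)$ imposes no inter-element continuity, $\div \airy u \in A^2(T)$ automatically.

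\emph{Cohomology.} The kernel of $\airy$ on $\rmC^1$ fields over a connected domain is the $3$-dimensional space of affine functions, giving the required $\rmH^0 A^\bs(T)$. For exactness at $A^1(T)$, let $u \in A^1(T)$ with $\div u = 0$. On each sub-triangle $T_i$, apply the stress-complex Poincar\'e operator $\poincare_1$ centered at the split point $b_T$ of $\calR(T)$: by the null-homotopy identity $\airy \poincare_1 v + \poincare_2 \div v = v$ and the fact that $\poincare_1$ raises polynomial degree by at most $2$, one gets $v_i := \poincare_1 (u|_{T_i}) \in \rmP^3(T_i,\bbR)$ with $\airy v_i = u|_{T_i}$ and, by the quadratic-in-$x$ integral formula, $v_i(b_T) = 0$ and $\grad v_i(b_T) = 0$. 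On a shared interior edge $E \ni b_T$ between $T_i$ and $T_j$, the identity $(\airy v_i)\nu = (\airy v_j)\nu$ (a consequence of $u \in A^1(T)$) reads $\partial_\tau \grad(v_i - v_j) = 0$ along $E$; combined with $\grad(v_i - v_j)(b_T) = 0$, this forces $\grad v_i = \grad v_j$ on $E$, hence $\partial_\nu v_i = \partial_\nu v_j$ and, after integrating $\partial_\tau v_i = \partial_\tau v_j$ from $b_T$, also $v_i = v_j$ on $E$. This is the $\rmC^1$-matching condition, so the $v_i$ assemble into $v \in A^0(T)$ with $\airy v = u$. Finally, the Euler characteristic $12-15+6 = 3 = \dim \ker \airy$ forces $\dim \rmH^2 = \dim \rmH^1 = 0$, so $\div : A^1(T) \to A^2(T)$ is surjective, completing the resolution.

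\emph{Main obstacle.} The delicate point is the gluing step in exactness at $A^1$: one must translate the normal-continuity built into $A^1(T)$ into a genuine $\rmC^1$-matching of the sub-triangle preimages of $\airy$. This hinges on the pointwise formula $(\airy v)\nu = J \partial_\tau \grad v$ — which converts a rank-one condition on the Airy preimage into a tangential derivative condition on its gradient — together with the special geometry of the Clough--Tocher split: the single interior vertex $b_T$ serves both as the common base point for the local Poincar\'e operators (enforcing vanishing of value and gradient there) and as the shared endpoint of all three interior edges (providing the anchor that turns ``constant along $E$'' into ``zero along $E$'').
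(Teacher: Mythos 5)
Your proof is correct, but the key step --- exactness at index $1$ --- is handled by a genuinely different route than the paper's. The paper deduces $\div u = 0 \Rightarrow u = \airy v$ by invoking exactness of the \emph{continuous} stress complex on the contractible triangle $T$ (producing $v \in \rmH^2(T)$) and then observing that the second derivatives of $v$ are piecewise $\rmP^1$, so that $v$ lands in the Clough--Tocher space; you instead build the potential constructively and piecewise, applying $\poincare_1$ centred at the split point on each sub-triangle and then proving the $\rmC^1$-matching of the local potentials across interior edges via the identity $(\airy v)\nu = J\,\partial_\tau \grad v$ and the vanishing of value and gradient at $b_T$. Your version avoids the (nontrivial) Sobolev-level exactness statement at the cost of an explicit gluing lemma, which you carry out correctly; it is closer in spirit to the paper's own Poincar\'e-operator machinery of \S\ref{sec:elasticity}. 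The one place where you lean on the literature is the unisolvence of the $A^1(T)$ degrees of freedom, which you cite from \cite{JohMer78} while the paper gives a self-contained argument (zero DoFs $\Rightarrow$ $\div u$ has zero DoFs in $A^2(T)$ by integration by parts $\Rightarrow$ $\div u = 0$ $\Rightarrow$ $u = \airy v$ with $v \in A^0(T)$, then Lemma \ref{lem:kerneldofstress} and the edge DoFs force $v$ affine). Note that your own exactness result supplies exactly the ingredient $\div u = 0 \Rightarrow u = \airy v$, $v \in A^0(T)$, needed to run that argument, so you could make the unisolvence claim self-contained with two more lines instead of the citation.
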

\begin{proof}
\tpoint For $A^0(T)$ this is well known.

\tpoint For $A^2(T)$ one can consider the scalar analogue, and go via the dual result, that any $u \in \rmP^1(T)$ is uniquely determined by the integrals on the three small triangles in $\calR(T)$, since their isobarycentres are not colinear.

\tpoint For $A^1(T)$ this is proved in \cite{JohMer78}, also via the Clough-Tocher element. We provide a slight modification of that proof, exploiting the $M$ DoFs (see Remark \ref{rem:mdofjm}).

The space $\rmP^1(\calR(T), \bbS)$ has dimension $3 \times 3 \times 3 = 27$. Imposing continuity of the normal component on interior edges can be expressed with $3 \times 2 \times 2 = 12$ constraints, so $\dim A^1(T)\geq 27-12 = 15$.

Now let $u \in A^1(T)$ and suppose that its DoFs are $0$. It then follows that $\div u \in A^2(T)$ has DoFs $0$, by integration by parts. So $\div u = 0$ and hence $u = \airy v$ for some $v \in \rmH^2(T)$. The second order derivatives of $v$ are in $\rmP^1(\calR(T))$, hence $v \in A^0(T)$. Now the $M(E)$-DoFs of $\airy v$ are $0$, so there exists $w \in \rmP^1(T)$ such that $v$ and $w$ have the same $M(V)$-DoFs (see Lemma \ref{lem:kerneldofstress}). We have $u = \airy(v - w)$. On a given edge $\partial_\tau \partial_\nu (v-w) = 0$ from the DoFs of $u$, so $\partial_\nu (v-w)$ is affine, in fact constant. Therefore $v-w = 0$ so $u= 0$.

\tpoint Exactness of $A^\bs(T)$ at index 1, was just proved. At index $0$ the kernel is the space of affine functions. Exactness at index $2$ then follows by dimension count.
\end{proof}

\begin{remark}
For each edge $E$, let $\chi_E$ be a nonzero affine map $E \to \bbR$ such that $ \int \chi_E = 0$.

In $A^1(T)$ we may think of the provided DoFs attached to an edge $E$ as
\begin{itemize}
\item Integrals of $u \nu \cdot \nu$ against $\rmP^1(E)$ and of $u \nu \cdot \tau$ against $\rmP^0(E)$, which together constitute pairings with $M(E)$.
\item Integral of $u \nu \cdot \tau$ against $\chi_E$.
\end{itemize}

From this point of view it seems natural to replace the edge DoF in $A^0(T)$ (namely $u \mapsto \int_E \partial_\nu u$) by $u \mapsto \int (\partial_\tau \partial_\nu u) \chi_E$. In particular $\int (\partial_\tau \partial_\nu u) \chi_E = 0$ iff $\partial_\nu u$ is affine, which appeared as a step in the proof of Theorem \ref{theo:stresscomplex}.
\end{remark}

\begin{definition}[FES for the stress complex]
We get a finite element system by appending the following spaces:
\begin{align}
A^0(E)& = \rmP^3(E) \times \rmP^2(E),\\
A^1(E)& = \rmP^1(E) \times \rmP^1(E),\\
A^0(V)& = \bbR \times \bbV.
\end{align}
A system of degrees of freedom is defined by:
\begin{align}
F^0(T) & = 0,\\
F^0(E) & = \{A^0(E) \ni (u,v) \mapsto \int_E (\partial_\tau v) \chi_E \},\\
F^0(V) & = \{\langle \cdot, \phi \rangle_V \ : \ \phi \in M(V) \},\\
F^1(T) & = \{A^1(T) \ni u \mapsto \int_T u \cdot v \ : \ v \in \rmP^0(T, \bbS)\},\\
F^1(E) & = \{\langle \cdot, \phi \rangle_E \ : \ \phi \in M(E) \} \oplus\\
& \qquad \bbR \{A^1(E) \ni (u,v) \mapsto \int_E u \chi_E \},\\
F^2(T) & = \{A^2(T) \ni u \mapsto \int_T u \cdot v \ : \ v \in \rmP^1(T, \bbV) \}.
\end{align}
\end{definition}
\begin{proposition}
The above finite element system is compatible and the system of degrees of freedom unisolvent.
\end{proposition}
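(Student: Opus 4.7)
The strategy is to invoke Proposition \ref{prop:suffunisolve} to obtain flabbyness and unisolvence simultaneously, and then to deduce compatibility from flabbyness together with local exactness, which is either available from Theorem \ref{theo:stresscomplex} (triangles) or immediate on lower-dimensional cells.

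First, for every cell $T \in \calT$ and every $k$, I would verify the two hypotheses of Proposition \ref{prop:suffunisolve}: (i) the map $A^k_0(T) \to F^k(T)^\star$ is injective, and (ii) $\dim A^k(T) \geq \sum_{T' \subcell T} \dim F^k(T')$. The dimensions to match are $(k=0, T{=}V){:}\ 3=3$; $(k{=}0, T{=}E){:}\ 7= 2\cdot 3 + 1$; $(k{=}1, T{=}E){:}\ 4=4$; $(k{=}0, T{=}T){:}\ 12= 3\cdot 3 + 3\cdot 1 + 0$ (matching the Clough--Tocher DoFs once one identifies $F^0(V)$ with the standard $(\mathrm{value},\mathrm{gradient})$ DoFs and the single $F^0(E)$ with a second-derivative edge moment equivalent to the usual $\int_E \partial_\nu u$ after integration by parts using that $u$ vanishes at the endpoints); $(k{=}1, T{=}T){:}\ 15= 3\cdot 4 + 3$, exactly the Johnson--Mercier count in Theorem \ref{theo:stresscomplex}; and $(k{=}2, T{=}T){:}\ 6=6$.

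For injectivity on vertices, the $M(V)$-pairing is nondegenerate by definition. On an edge, if all DoFs of $(u,v) \in A^0_0(E) \subset \rmP^3(E) \times \rmP^2(E)$ vanish at the endpoints and $\int_E (\partial_\tau v)\chi_E = 0$, then $u$ has a double root at each endpoint of a cubic (so $u=0$) while $v$ vanishes at both endpoints and its derivative is $\rmL^2$-orthogonal to $\chi_E$, forcing $v=0$. For $A^1(E)$, the four DoFs recover the four moments $\int u,\int u\chi_E,\int v,\int v\chi_E$ of two linear polynomials. For the triangle at $k=1$, injectivity of $A^1_0(T) \to F^1(T)^\star$ is exactly the content of the proof of unisolvence of the Johnson--Mercier element in Theorem \ref{theo:stresscomplex}, once one notes that the boundary DoFs there can be regrouped into the pairings with $M(E)$ plus a single $\chi_E$-moment of $u\nu\cdot\tau$ per edge, which spans the same functionals on $A^1(T)|_E$. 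For $k=0$ on $T$ the Clough--Tocher element has trivial $A^0_0(T)$, so injectivity is vacuous; for $k=2$ the map $A^2(T) \to \rmP^1(T,\bbV)^\star$ is injective by Theorem \ref{theo:stresscomplex}. Proposition \ref{prop:suffunisolve} then yields unisolvence of $F$ on all of $\calT$ and flabbyness of $A$.

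Second, I would check local exactness. For a vertex $V$, $A^\bs(V)$ is concentrated at degree $0$ and $j_V$ is the nondegenerate $M(V)$-pairing, hence an isomorphism. For an edge $E$, $K(E)$ consists of $(u,v)$ with $u$ affine and $v$ constant (dimension $3 = \dim L(E)$), $j_E$ is injective since $v$ and $\partial_\tau u$ are determined at a vertex and $u$ at that vertex then determines $u$ on $E$; exactness of $A^0(E) \to A^1(E)$ at the right end follows by the dimension count $\dim A^0(E) - \dim K(E) = 7-3 = 4 = \dim A^1(E)$. For a triangle, Theorem \ref{theo:stresscomplex} already asserts that $A^\bs(T)$ is a resolution of the affine functions, and $j_T: \rmP^1(T) \to L(T) = M(T)^\star$ is injective because restricting an affine function to any vertex $V$ recovers the pair $(v(V),\grad v(V))$, so $j_T(v)=0$ forces $v=0$; dimension equality then makes it an isomorphism. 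Thus the FES is locally exact. By Definition \ref{def:compat}, $A$ is compatible.

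The main obstacle is the bookkeeping at index $k=1$ on the triangle: one must verify that the rephrased DoFs ($M(E)$-pairings plus a $\chi_E$-moment) genuinely span the same space of linear functionals on the Johnson--Mercier edge trace as the original $\int_E u\nu \cdot v$ for $v\in\rmP^1(E,\bbV)$, so that the unisolvence proof in Theorem \ref{theo:stresscomplex} applies verbatim to the new DoF system; once this identification is made, all remaining arguments are dimension counts and the de Rham theorem at the vertex and edge levels.
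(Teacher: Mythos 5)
Your proof is correct and takes essentially the same route as the paper's (very terse) argument: the paper simply notes that the essential points were already established in Theorem \ref{theo:stresscomplex} and that the only real addition is unisolvence on $A^1(E)$, while you spell out that same content systematically via Proposition \ref{prop:suffunisolve} together with the cell-by-cell local exactness checks. All of your dimension counts and injectivity verifications are accurate, so this reads as a fully detailed version of the paper's proof rather than a different approach.
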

\begin{proof}
The essential points were already proved in Theorem \ref{theo:stresscomplex}. The main addition is the unisolvence of the degrees of freedom on $A^1(E)$, which is straigthforward.
\end{proof}

\begin{remark}[Minimal spaces]\label{rem:minjm}
\mbox{}
The preceding FE complex may be reduced as follows:
\begin{itemize}
\item $\tilde A^0(T)$ is the reduced Clough-Tocher element where $\partial_\nu u$ is affine on edges. The DoFs are now only pairings with $M(V)$ at vertices. The dimension is $9$.
\item $\tilde A^2(T)$ consists of divergence free elements of $\rmP^0(\calR(T))$, i.e. the elements that are continuous in the normal direction on interior edges. It can also be characterized as $\curl \rmC^0\rmP^1(\calR(T), \bbR)$. The dimension is $3$ and the degrees of freedom are now pairings with $M(T)$ only.
\item $\tilde A^1(T)$  is the subspace of $\rmH^0_{\div} \rmP^1(\calR(T), \bbS)$, consisting of elements $u$ such that $\div u \in \tilde A^2(T)$ and, on any edge, $u \nu \cdot \tau \in \rmP^0(E)$. The dimension is $9$ and the degrees of freedom are pairings with $M(E)$ on edges, only.
\end{itemize}
A compatible FES is obtained by appending
\begin{align}
A^0(E)& = \rmP^3(E) \times \rmP^1(E),\\
A^1(E)& = \rmP^0(E) \times \rmP^1(E),\\
A^0(V)& = \bbR \times \bbV.
\end{align}
As already asserted, natural degrees of freedom are:
\begin{align}
\tilde F^0(T) & = 0,\\
\tilde F^0(E) & = 0,\\
\tilde F^0(V) & = \{\langle \cdot, \phi \rangle_V \ : \ \phi \in M(V) \},\\
\tilde F^1(T) & = 0,\\
\tilde F^1(E) & = \{\langle \cdot, \phi \rangle_E \ : \ \phi \in M(E) \} \\
\tilde F^2(T) & = \{\langle \cdot, \phi \rangle_E \ : \ \phi \in M(T) \}.
\end{align} 
\end{remark}

\begin{remark} 
Discrete diagram chase for the Johnson Mercier stress complex. See Figure \ref{fig:diagstress}.

One could also exhibit an alternative diagram chase, based on Remark \ref{rem:chasestressbis}.
\end{remark}

\begin{figure}
\includegraphics[width= 16cm]{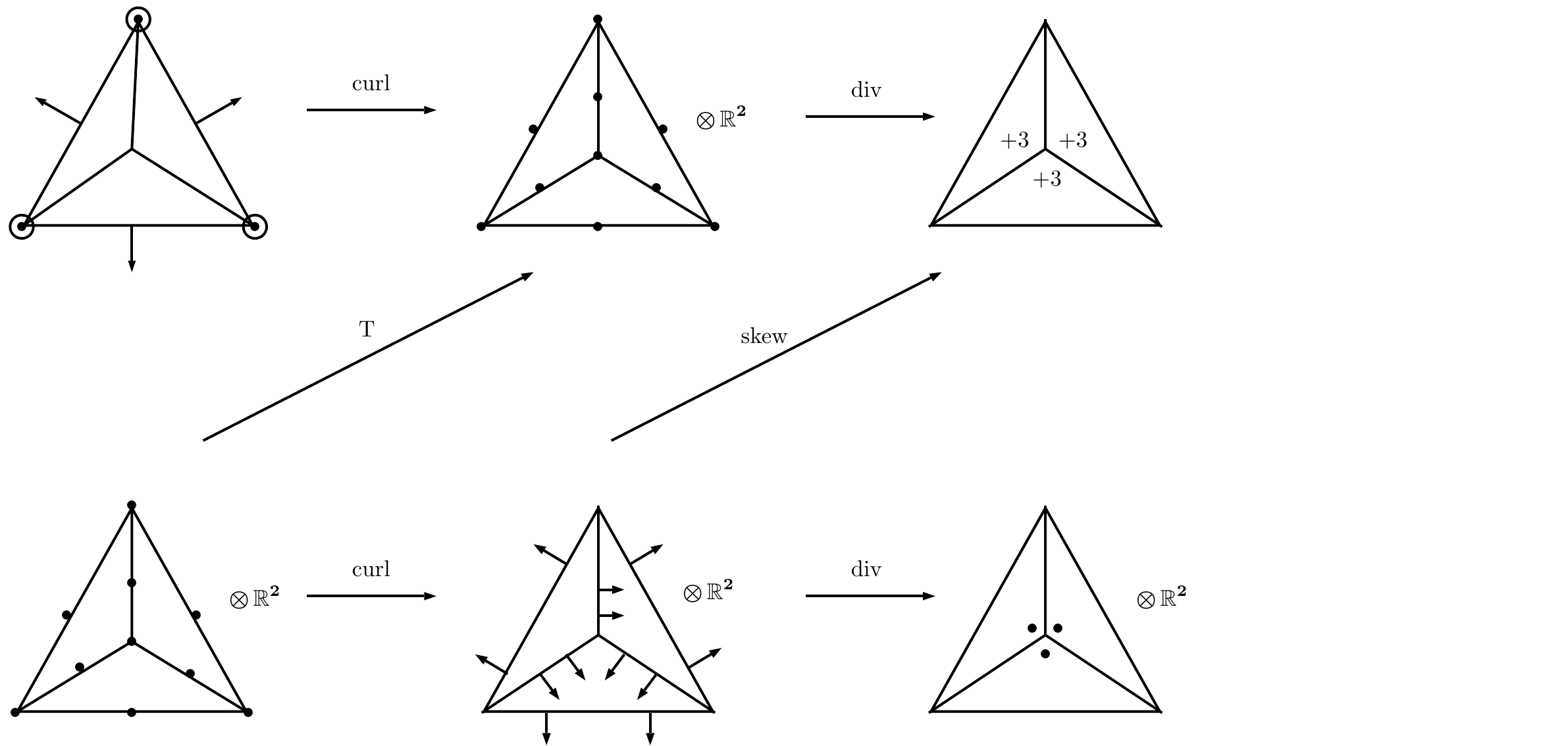}
\caption{Diagram chase for the Johnson Mercier element. \label{fig:diagstress}}
\end{figure}

\begin{remark}
High order finite element stress complexes are provided in \cite{ArnDouGup84}. They seem to fit in our framework too. Their lowest order complex starts with $\rmC^1\rmP^4(\calR(T), \bbR)$ and ends in $\rmP^1(T, \bbV)$. The middle space of stresses augments $\rmP^2(T, \bbS)$ with a 3-dimensional space, so has dimension 21.
\end{remark}

\section{FES for the strain complex \label{sec:fesstrain}}

\subsection{Induced operators and discrete vector bundle}

We here consider now the complex (\ref{eq:elastrain}). We identify induced spaces and operators on edges and vertices. They are summarized in the commuting diagram depicted in Figure \ref{fig:straindiagram}.

Again we consider a vertex $V$, in an edge $E$, in a triangle $T$. The oriented unit tangent on $E$ is denoted $\tau$ and the normal is denoted $\nu$, so that $(\tau , \nu)$ is an oriented orthonormal basis of $\bbV$.

\begin{figure}[htbp]
\begin{tikzpicture}

\matrix (L) [matrix of math nodes, row sep = 2.5cm, column sep = 2cm] {
\Gamma(T, \bbV) & \Gamma(T, \bbS) & \Gamma(T, \bbR) \\
\Gamma(E, \bbR^2 \times \bbR^2) & \Gamma(E, \bbR^3 \times \bbR) \\
\Gamma(V, \bbV \times \bbM) & \Gamma(V, \bbS) \\
};

\path[->] 
(L-1-1) edge node[above] {$\defo$} (L-1-2)
(L-1-2) edge node[above] {$\sven$} (L-1-3)
(L-2-1) edge (L-2-2)
(L-1-1) edge (L-2-1)
(L-2-1) edge (L-3-1)
(L-1-2) edge (L-2-2)
(L-2-2) edge (L-3-2)
(L-3-1) edge (L-3-2)
(L-1-1) edge [out= -130, in = 130]  node [left]{\footnotesize $\begin{array}{c}
u \\ \downarrow \\ (u, \grad u) \end{array}$} (L-3-1)
;

\draw (L-1-2) + (0, -0.7) node [below right] {\footnotesize $\begin{array}{l}
\quad u \\ \quad \downarrow \\ (u\tau \cdot \tau, u \tau \cdot \nu, u \nu \cdot \nu,  \partial_\nu u\tau\cdot \tau) \end{array}$};

\draw (L-1-1) + (0, -0.7) node [below right] {\footnotesize $\begin{array}{l} 
\quad u\\ \quad \downarrow \\ (u\cdot \tau, u \cdot \nu, \partial_\nu u\cdot \tau, \partial_\nu u \cdot \nu )
\end{array}$};

\draw (L-2-1) + (0, -0.2) node [below right] {\footnotesize $\begin{array}{l}
(u, v , u', v')  \rightarrow  (\partial_\tau u, \frac{1}{2}(u' + \partial_\tau v), v' , \partial_\tau u') \\
\quad \downarrow\\
 (u \tau + v \nu, \partial_\tau u \tau \tau^\transp + \partial_\tau v \nu \tau^\transp + u' \tau \nu^\transp + v' \nu \nu^\transp ) 
\end{array}$};

\draw (L-2-2) + (0, -1.5) node [below right] {\footnotesize $\begin{array}{l}
(u,v,w, k)\\
\quad \downarrow\\
\quad u \tau \tau^\transp + v (\tau \nu^\transp + \nu \tau^\transp) + w \nu \nu^\transp
\end{array}$};

\draw (L-3-1) + (0, -0.2) node [below right] {\footnotesize $\begin{array}{l}
(u,v) \rightarrow \sym(v) 
\end{array}$};

\end{tikzpicture}
\caption{Straincomplex and induced operators: high regularity \label{fig:straindiagram}}
\end{figure}

If we consider instead the complex of lower regularity, as in (\ref{eq:elastrainlow}), the corresponding diagram is depicted in Figure \ref{fig:straindiagramlow}.

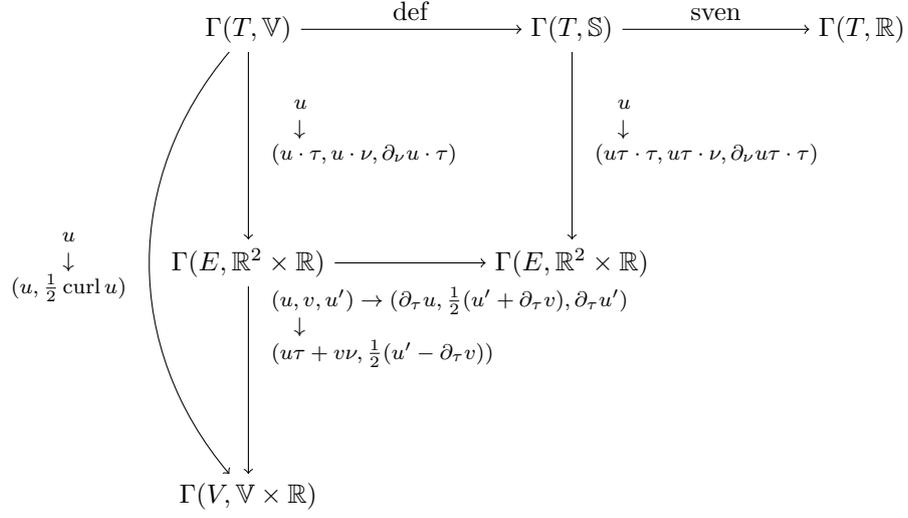
\begin{figure}[htbp]

\begin{tikzpicture}

\matrix (L) [matrix of math nodes, row sep = 2.5cm, column sep = 2cm] {
\Gamma(T, \bbV) & \Gamma(T, \bbS) & \Gamma(T, \bbR) \\
\Gamma(E, \bbR^2 \times \bbR) & \Gamma(E, \bbR^2 \times \bbR) \\
\Gamma(V, \bbV \times \bbR) \\
};

\path[->] 
(L-1-1) edge node[above] {$\defo$} (L-1-2)
(L-1-2) edge node[above] {$\sven$} (L-1-3)
(L-2-1) edge (L-2-2)
(L-1-1) edge (L-2-1)
(L-2-1) edge (L-3-1)
(L-1-2) edge (L-2-2)
(L-1-1) edge [out= -130, in = 130]  node [left]{\footnotesize $\begin{array}{c}
u \\ \downarrow \\ (u, \frac{1}{2}\curl u) \end{array}$} (L-3-1)
;

\draw (L-1-2) + (0, -0.7) node [below right] {\footnotesize $\begin{array}{l}
\quad u \\ \quad \downarrow \\ (u\tau \cdot \tau, u \tau \cdot \nu,  \partial_\nu u\tau\cdot \tau) \end{array}$};

\draw (L-1-1) + (0, -0.7) node [below right] {\footnotesize $\begin{array}{l} 
\quad u\\ \quad \downarrow \\ (u\cdot \tau, u \cdot \nu, \partial_\nu u\cdot \tau)
\end{array}$};

\draw (L-2-1) + (0, -0.2) node [below right] {\footnotesize $\begin{array}{l}
(u, v , u')  \rightarrow  (\partial_\tau u, \frac{1}{2}(u' + \partial_\tau v), \partial_\tau u') \\
\quad \downarrow\\
 (u \tau + v \nu, \frac{1}{2}(u' - \partial_\tau v) ) 
\end{array}$};

\end{tikzpicture}

\caption{Straincomplex and induced operators: low regularity \label{fig:straindiagramlow}}
\end{figure}

The discrete vector bundle is defined as follows:

\begin{itemize}
\item We let $M(T)$ be the kernel of the Airy operator, namely the space of affine functions.
\item We let $M(E)$ the space of pairs $(u,v)$ where $u$ is an affine function on $E$ and $v$ is constant.
\item We let $M(V)$ be the space $\bbV \times \bbR$.
\end{itemize}

We define restriction operators:
\begin{itemize}
\item $M(T) \to M(E) : \phi \mapsto (\phi|_E, \partial_\nu \phi|_E)$.

\item $M(E) \to M(V) : (\phi, \psi) \mapsto (\psi(V) \tau - \partial_\tau \phi(V) \nu, \phi(V))$.

\item The composition $M(T) \to M(E) \to M(V)$  is then $\phi \mapsto (\curl \phi (V), \phi (V))$, which is independent of $E$. 
\end{itemize}

The pairings are:
\begin{itemize}
\item on $\Gamma(T, \bbR) \times M(T)$ : $\langle u, \phi \rangle = \int u \phi$.
\item on $\Gamma(E, \bbR^2 \times \bbR) \times M(E)$: 
\begin{equation}
\langle (u,v, u') , (\phi, \psi) \rangle = \int_E u \psi + \partial_\tau v \phi - v \partial_\tau \phi - u' \phi. 
\end{equation}
\item on $\Gamma(V, \bbV \times \bbR) \times M(V)$ : $\langle (u, v) , (\phi, \psi) \rangle =  u \cdot \phi + v \psi$.
\end{itemize}

The Stokes-like identities are:

\begin{itemize}
\item For $u \in  \Gamma(T, \bbS)$ and $\phi \in M(T)$ we have:
\begin{align}
\int_T \sven u \ \phi & = \int_{\partial T} \curl u \cdot \tau \phi + \int_{\partial T} u \tau \cdot  \transp \curl \phi, \\
& = \int_{\partial T} (\partial_\tau u \tau \cdot \nu - \partial_\nu  u \tau \cdot \tau) \phi + u \tau \cdot (\partial_\nu \phi \tau - \partial_\tau \phi \nu),\\
& = \int_{\partial T} u \tau \cdot \tau \partial_\nu \phi + \partial_\tau u\tau \cdot \nu \phi - u \tau \cdot \nu \partial_\tau \phi - \partial_\nu u \tau \cdot \tau  \phi,\\
& = \sum_E \langle (u \tau \cdot \tau, u \tau \cdot \nu, \partial_\nu u\tau \cdot \tau), (\phi, \partial_\nu \phi) \rangle_E.
\end{align}

 \item For $(u,v,u') \in \Gamma (E, \bbR^2 \times \bbR)$ and $(\phi, \psi) \in M(E)$:
\begin{align}
& \phantom{=} \langle (\partial_\tau u, \frac{1}{2}(u' + \partial_\tau v), \partial_\tau u'), (\phi, \psi) \rangle ,\\
& = \int_E \partial_\tau u \psi + \partial_\tau \frac{1}{2}(u' + \partial_\tau v) \phi -  \frac{1}{2}(u' + \partial_\tau v) \partial_\tau \phi - \partial_\tau u' \phi, \\
& = \lsb u \psi \rsb  + \lsb \frac{1}{2}(u' + \partial_\tau v) \phi \rsb  - \int (u' + \partial_\tau v) \partial_\tau \phi - \lsb u' \phi \rsb  + \int u' \partial_\tau \phi,\\
& = \lsb u \psi \rsb  + \lsb \frac{1}{2}(u' + \partial_\tau v) \phi \rsb  - \lsb  v \partial_\tau \phi \rsb  - \lsb u' \phi \rsb ,\\
& = \lsb u \psi \rsb  + \lsb \frac{1}{2}(\partial_\tau v -u') \phi \rsb  - \lsb  v \partial_\tau \phi \rsb ,\\
& = \lsb  \langle (u \tau + v \nu, \frac{1}{2}(\partial_\tau v -u')) , (\psi \tau - \partial_\tau \phi \nu, \phi) \rangle_{V} \rsb .
\end{align}
Here the brackets denote differences between values at the two vertices of $E$.
\end{itemize}

\begin{lemma}\label{lem:kerneldof}\mbox{}
\begin{itemize}
\item For any triangle $T$, the complex $\calC^\bs(\subcells(T), M^\star)$ is exact except at index $0$ where the kernel can be characterized as follows. For $u \in \calC^0(\subcells(T), M^\star)$ we have $\delta_\trans u = 0$ iff $u$ represents the degrees of freedom of a rigid motion -- i.e. there exists a rigid motion $v$ on $T$, such that $u$ and $v$ evaluate similarly against $M(V)$ for each vertex $V$ of $T$.
\item  For any edge $E$, the complex $\calC^\bs(\subcells(E), M^\star)$ is exact except at index $0$ where the kernel can be characterized as follows. For $u \in \calC^0(\subcells(T), M^\star)$ we have $\delta_\trans u = 0$ iff $u$ represents the degrees of freedom of the restriction of of rigid motion -- i.e. there exists a rigid motion $v$ on $\bbR^2$, such that $u$ and $v$ evaluate similarly against $M(V)$ for each vertex $V$ of $E$.
\end{itemize}
\end{lemma}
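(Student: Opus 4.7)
The plan is to reduce both parts of the lemma to Corollary~\ref{cor:cohlt}, which already provides exactness and dimension information, and then identify the kernel at index $0$ concretely by an injection from rigid motions.

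First I would invoke Corollary~\ref{cor:cohlt} applied to the discrete flat vector bundle $L = M^\star$ on $\subcells(T)$ (respectively $\subcells(E)$). This immediately gives exactness at every index except $0$, and identifies the kernel at index $0$ with $L(T) = M(T)^\star$ (resp.\ $L(E) = M(E)^\star$). Both $M(T)$ (affine functions on $T$) and $M(E)$ (pairs $(\phi,\psi)$ with $\phi$ affine and $\psi$ constant on $E$) are $3$-dimensional, so in both cases the kernel at index $0$ has dimension $3$. Note also that the space of rigid motions on $T$ (resp.\ on $\bbR^2$) has dimension $3$. The strategy is therefore to produce a linear map from rigid motions into the kernel and argue that it is an isomorphism by dimension count plus injectivity.

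Next I would define the map $\Phi$ explicitly. Given a rigid motion $v(x) = a + \omega J(x - x_0)$, set
\begin{equation}
\Phi(v)(V) \;=\; \bigl\langle (v(V),\, \tfrac{1}{2}\curl v(V)),\, \cdot\,\bigr\rangle_V \;\in\; M(V)^\star,
\end{equation}
for each vertex $V$ of $T$ (resp.\ of $E$). Injectivity is immediate: at a single vertex $V$, vanishing of this functional forces $v(V) = 0$ and $\tfrac{1}{2}\curl v = \omega = 0$, which suffice to pin down $v = 0$ among rigid motions.

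The main step is to show that $\Phi(v)$ actually lies in the kernel of $\delta_\trans$. I would do this by evaluating $\delta_\trans \Phi(v)$ at an arbitrary edge $E$ with vertices $V_0, V_1$ and pairing with an arbitrary $(\phi,\psi) \in M(E)$. Using the explicit form of $s_{VE}: M(E) \to M(V)$ given in the construction of the restriction operators, and the Stokes-like identity on the edge that is displayed just before the statement of the lemma, the pairing $\langle(\delta_\trans \Phi(v))(E),(\phi,\psi)\rangle_E$ equals the integral over $E$ of the pairing between the induced edge differential of $\rest_{ET}v$ and $(\phi,\psi)$. Since the restriction $v \mapsto \rest_{ET}v$ commutes with the differentials in the diagram (Figure~\ref{fig:straindiagram} or \ref{fig:straindiagramlow}), this integrand is the induced edge data of $\defo v$, which vanishes identically because $v$ is a rigid motion. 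Hence $\delta_\trans \Phi(v) = 0$.

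Combining injectivity of $\Phi$ with the fact that both source and target have dimension $3$ yields that $\Phi$ is an isomorphism onto the kernel, giving the desired characterization. The edge case is entirely parallel: the only change is that the complex $\calC^\bs(\subcells(E),M^\star)$ has only two terms, so ``exactness except at index $0$'' reduces to surjectivity of $\delta_\trans: \calC^0 \to \calC^1$, which again follows from Corollary~\ref{cor:cohlt}, while the identification of the kernel with restrictions of rigid motions on $\bbR^2$ uses the same $\Phi$ construction. The main obstacle, modest as it is, is bookkeeping: verifying that the cocycle condition for $\Phi(v)$ really unpacks to the vanishing of $\defo v$ along each edge, using the correct Stokes-like identity and sign conventions from the transport operators.
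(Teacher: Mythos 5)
Your proposal is correct and follows essentially the same route as the paper's (very terse) proof: invoke Corollary~\ref{cor:cohlt} to get exactness away from index $0$ and a $3$-dimensional kernel at index $0$, exhibit the natural injection of rigid motions into that kernel, and conclude surjectivity by dimension count. The only difference is that you spell out the injection $\Phi$ and verify the cocycle condition via the edge Stokes-like identity and $\defo v = 0$, details the paper leaves implicit in the phrase ``the rigid motions naturally inject into this kernel.''
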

\begin{proof}
From Corollary \ref{cor:cohlt}. At index $0$ the kernel of the discrete covariant exterior derivative has dimension 3. On the other hand the rigid motions naturally inject into this kernel. Surjectivity then follows from dimension equality.
\end{proof}

\subsection{Discrete spaces: higher regularity}

For a triangle $T$ we denote by $\calR(T)$ the Clough Tocher split of $T$.

\begin{definition}[FE for the strain complex: definition \`a la Ciarlet] \label{def:festrain} \mbox{}
\begin{itemize}
\item We define:
\begin{equation}
A^0(T)  = \rmC^1 \rmP^3 (\calR(T), \bbV),
\end{equation}
$A^0(T)$ is thus the vector valued variant of the Clough Tocher space. 
The degrees of freedom are:
values at vertices ($3 \times 2$), values of the gradient at vertices ($3 \times 4$), integral of normal derivative on edges ($3 \times 2$). 

\item We define:
\begin{equation}
A^1(T) =  \rmC^0_{\sven} \rmP^2(\calR(T), \bbS),
\end{equation}
by which we mean symmetric matrix fields that are piecewise polynomial of degree at most $2$, are continuous, and have an integrable $\sven$. 
The degrees of freedom are: values at vertices ($3 \times 3$), pairings of edge restrictions with $M(E)$ for each edge $E$ ($3 \times 3$), integral against normal vector on edges ($3 \times 2$).

\item We define:
\begin{equation}
A^2(T)= \rmP^0(\calR(T), \bbR),
\end{equation}
that is the space of piecewise constants. The degrees of freedom are pairings with $M(T)$, namely integration against affine functions.
\end{itemize}
\end{definition}

\begin{remark}
On $A^0(T)$, pairings of restrictions to vertices, with $M(V)$ at vertices $V$, providing the vertex values and vertex values of the $\curl$, constitute a subspace of the provided vertex DoFs.

On $A^1(T)$, the edge DoFs involve normal derivatives through the restriction operators and tangential derivatives through the pairing with $M(E)$.
\end{remark}

The finite element spaces of Definition \ref{def:festrain} are represented in Figure \ref{fig:strainhigh}.
\begin{figure}
\includegraphics[width= \textwidth]{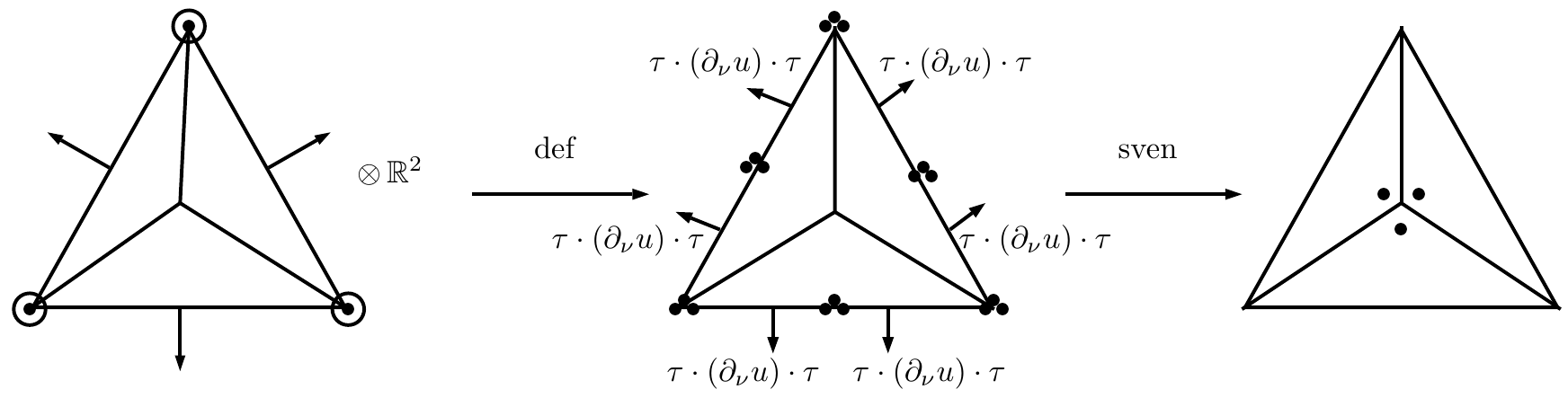}
\caption{Strain complex with high continuity. \label{fig:strainhigh}}
\end{figure}

\begin{proposition}
The provided DoFs give interpolators commuting with the differential operators.
\end{proposition}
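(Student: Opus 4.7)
The plan is to reduce the commutation identities $\Pi^1 \defo = \defo \Pi^0$ and $\Pi^2 \sven = \sven \Pi^1$ to equalities of DoFs via unisolvence, and then to verify the DoF matching using the Stokes-type identities established in this subsection together with the Hermite-matching properties of the Clough--Tocher element.

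First one checks that $\defo A^0(T) \subseteq A^1(T)$ and $\sven A^1(T) \subseteq A^2(T)$: the first uses that $\rmC^1$-piecewise-$\rmP^3$ maps under $\defo$ into $\rmC^0$-piecewise-$\rmP^2$ symmetric-matrix-valued fields; the second uses that the $\sven$-integrability built into $A^1(T)$ is precisely what is needed for $\sven u$ to be an honest piecewise-$\rmP^0$ function, with no distributional contributions along interior edges of $\calR(T)$. Given these inclusions and unisolvence of the DoFs (which follows from a dimension count together with an injectivity argument analogous to the one in Theorem~\ref{theo:stresscomplex}), both commutation identities reduce to the equality of the two corresponding families of DoFs.

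For the $M$-pairing DoFs I would invoke the Stokes identities of this subsection. For any $\phi \in M(T)$, the identity
$$\int_T \sven u \, \phi \;=\; \sum_{E \subset \partial T} \langle (u\tau\cdot\tau,\, u\tau\cdot\nu,\, \partial_\nu u\tau\cdot\tau), (\phi|_E, \partial_\nu \phi|_E) \rangle_E$$
holds both for smooth $u$ and, by the $\sven$-integrability built into $A^1(T)$, for $u = \Pi^1 v$ (the potential interior-edge contributions having already been killed by the definition of $A^1(T)$). The right-hand side is a linear combination of $M(E)$-pairings of the edge restrictions of $\Pi^1 v$, which by construction agree with those of $v$; hence $\sven(\Pi^1 v)$ and $\sven v$ have identical $M(T)$-pairings, giving $\Pi^2 \sven v = \sven \Pi^1 v$. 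The argument for $\Pi^1 \defo v = \defo \Pi^0 v$ along the $M(E)$-pairings is analogous, using the edge-level integration by parts identity: the $M(E)$-pairing of $\defo v$ on $E$ reduces to a boundary expression in the Hermite edge data of $v$, which is precisely what the Clough--Tocher DoFs of $\Pi^0 v$ reproduce.

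For the remaining DoFs (vertex values of $A^1$ and normal-integral DoFs on its edges), commutation is direct: vertex values of $\defo \Pi^0 v$ depend only on $\grad \Pi^0 v$ at the vertex, which equals $\grad v$ there by the Hermite matching; and the normal-integral DoFs reduce, after an edge integration by parts, to vertex Hermite data and integrated normal derivatives of $v$, all of which are DoFs matched by $\Pi^0 v$. The main obstacle I expect is the regularity verification that underlies the Stokes identities for the discrete fields, namely that the $\sven$ of an element of $A^1(T)$ really has no singular distributional support on the interior edges or the center vertex of $\calR(T)$; this amounts to showing that the tangential jumps of $\curl(\Pi^1 v)$ across interior edges vanish, which in turn follows from the full symmetric-matrix $\rmC^0$-continuity built into $A^1(T)$. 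Once this regularity check is dispatched, all DoF matchings follow mechanically from the smooth Stokes formulas.
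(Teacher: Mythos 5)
The paper states this proposition without any proof, so there is nothing to compare line by line; your argument is the one the FES framework is designed to make automatic (reduce commutation to equality of DoFs via unisolvence, then match the DoFs using the generalized Stokes identities (\ref{eq:Stokes}) and the edge-level integration-by-parts formulas), and it is essentially correct. One justification near the end is imprecise, though. You claim that the absence of distributional contributions to $\sven u$ along the interior edges of $\calR(T)$ ``follows from the full symmetric-matrix $\rmC^0$-continuity built into $A^1(T)$.'' That is not quite right: writing $\curl u \cdot \tau = \partial_\tau u\tau\cdot\nu - \partial_\nu u\tau\cdot\tau$ on an interior edge, $\rmC^0$-continuity of $u$ only controls the tangential derivative term; the normal-derivative term $\partial_\nu u\tau\cdot\tau$ is precisely the quantity whose interelement continuity is imposed as the \emph{extra} condition in the definition $A^1(T)=\rmC^0_{\sven}\rmP^2(\calR(T),\bbS)$ (the six additional constraints counted in the proof of Theorem \ref{theo:straincomplex}). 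So the regularity you need is guaranteed by the $\sven$-integrability condition in the definition --- which you correctly invoke earlier in your proof --- not by continuity of $u$ alone. With that attribution corrected, the rest goes through: $\defo$ maps $\rmC^1\rmP^3(\calR(T),\bbV)$ into $A^1(T)$ since $\sven\defo=0$, the vertex and $M(E)$/$M(T)$ pairings of the differentials are determined by the DoFs of the argument exactly as you describe, and the remaining normal-integral DoFs of $\defo v$ on an edge reduce, via $(\grad v)^\transp\nu=\grad(v\cdot\nu)$ and an edge integration by parts, to vertex Hermite data plus $\int_E\partial_\nu v$, all of which are matched by the Clough--Tocher interpolant.
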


The preceding finite element spaces can regarded as a finite element system, as follows.
\begin{definition}[FES for the strain complex] \label{def:fesstrain} \mbox{}
\begin{itemize}
\item Differential operators and restrictions are defined according to Figure \ref{fig:straindiagram}.

\item The spaces $A^0(T)$, $A^1(T)$ and $A^2(T)$ are defined as in Definition \ref{def:festrain}.

\item Spaces on edges $E$ and vertices $V$ are defined by:
\begin{align}
A^0(E) & = \rmP^3(E) \times \rmP^3(E) \times \rmP^2(E) \times \rmP^2(E), \\ 
A^1(E) & = \rmP^2(E) \times \rmP^2(E) \times \rmP^2(E) \times \rmP^1(E),  \\
A^0(V) & = \bbV \times \bbM, \\
A^1(V) & = \bbS.
\end{align}

\item A system of degrees of freedom $F$ on $A$ is defined by:
\begin{align}
F^0(T) & = 0, \\
F^0(E) & = \{A^0(E) \ni (u,v,u',v') \mapsto \ts \int_E u' \} \oplus\\
       & \qquad \{A^0(E) \ni (u,v,u',v') \mapsto \ts \int_E v' \}, \\
F^0(V) & = A^0(V)^\star \approx \bbV^\star \oplus \bbM^\star, \\
F^1(T) & = 0, \\
F^1(E) & = \{ \langle \cdot , \phi \rangle_E \ : \ \phi \in M(E) \} \oplus\\ 
       & \qquad \bbR \{A^1(E) \ni (u,v,w, u') \mapsto \ts \int_E v \} \oplus\\
       & \qquad \bbR \{A^1(E) \ni (u,v,w, u') \mapsto \ts \int_E w \} , \\
F^1(V) & = A^1(V) = \bbS^\star, \\
F^2(T) & = \{ \langle \cdot , \phi \rangle_T \ : \ \phi \in M(T) \}.
\end{align}
(other spaces are set to $0$).
\end{itemize}
\end{definition}

\begin{theorem}[FE for the strain complex] \label{theo:straincomplex} \mbox{}
\begin{itemize}
\item $A^0(T)$ has dimension $24$ and the provided DoFs are unisolvent.

\item $A^1(T)$ has dimension $24$ and the provided DoFs are unisolvent.

\item $A^2(T)$ has dimension $3$ and the provided DoFs are unisolvent.

\item The sequence $A^\bs(T)$ is a resolution of the rigid motions.
\end{itemize}

\end{theorem}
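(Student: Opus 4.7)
The plan is to establish dimension and unisolvence for each of $A^0(T)$, $A^2(T)$, $A^1(T)$ in turn, and then deduce the resolution claim by a dimension count. The case $A^0(T) = \rmC^1\rmP^3(\calR(T),\bbV)$ is essentially the vector-valued scalar Clough--Tocher element (dimension $12$), so it has dimension $24$ with the listed vertex-value, vertex-gradient and edge-normal-derivative DoFs unisolvent by the scalar case applied componentwise. For $A^2(T) = \rmP^0(\calR(T),\bbR)$, the dimension is $3$, and unisolvence against $M(T)\cong$ (affine functions on $T$) reduces to showing that affine functions on $T$ are determined by their values at the three barycenters of the Clough--Tocher subtriangles, which holds because those barycenters are affinely independent.

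For $A^1(T)$, I would first count dimensions: $\dim\rmP^2(\calR(T),\bbS) = 54$ (three subtriangles, $\dim\rmP^2 = 6$, $\dim\bbS = 3$); the $\rmC^0$-continuity constraints across the three interior edges contribute $27$ conditions minus a $3$-dimensional redundancy at the central vertex (one per component of $\bbS$), for $24$ net; and the integrability of $\sven u$ forces $\jump{\partial_\nu u_{\tau\tau}}|_E = 0$ on each interior edge, a $\rmP^1$-valued condition yielding $6$ more. This gives $\dim A^1(T) \geq 54 - 24 - 6 = 24$, with equality to follow from the unisolvence argument. For unisolvence, I would assume $u\in A^1(T)$ has all DoFs zero and proceed as follows. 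First, pair $\sven u$ against $\phi\in M(T)$ via the Stokes-like identity to get $\int_T\phi\,\sven u = \sum_E\langle\rest_E u, (\phi|_E, \partial_\nu\phi|_E)\rangle_E$, which vanishes since $(\phi|_E, \partial_\nu\phi|_E) \in M(E)$ and the $M(E)$-pairings of $u$ are zero; thus $\sven u$ has zero $M(T)$-DoFs in $A^2(T)$ and $\sven u = 0$ by unisolvence of $A^2(T)$. Next, exactness of the continuous strain complex on the contractible $T$ produces $v\in\rmH^2(T,\bbV)$ with $\defo v = u$; local surjectivity of $\defo : \rmP^3(T_i,\bbV) \to \ker\sven\cap\rmP^2(T_i,\bbS)$ modulo rigid motions shows $v$ is piecewise $\rmP^3$ on $\calR(T)$, and $\rmH^2$ regularity combined with this piecewise polynomial structure upgrades it to $\rmC^1$, so $v\in A^0(T)$.

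Since the de Rham map $\eval$ commutes with differentials and the $M(E)$-pairings of $u = \defo v$ vanish, the $0$-cochain $\eval v \in \calC^0(\subcells(T), M^\star)$ is a cocycle; Lemma \ref{lem:kerneldof} then provides a rigid motion $w$ on $T$ with $\eval w = \eval v$, and setting $\tilde v := v - w$ gives $\tilde v(V) = 0$ and $\curl\tilde v(V) = 0$ at every vertex, with $\defo\tilde v = u$ still. I would then translate the remaining DoFs of $u$ into DoFs of $\tilde v \in A^0(T)$: the vertex conditions $u(V) = 0$ give $\sym\grad\tilde v(V) = 0$, which together with $\curl\tilde v(V) = 0$ forces the full $\grad\tilde v(V) = 0$; and the extra edge DoFs $\int_E u_{\tau\nu} = \int_E u_{\nu\nu} = 0$, after using $\int_E\partial_\tau\tilde v_\nu = [\tilde v_\nu]_E = 0$ (from $\tilde v(V)=0$ at endpoints) to rewrite the first, reduce to $\int_E\partial_\nu\tilde v = 0$ for both vector components. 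These are precisely all $24$ Clough--Tocher DoFs of $\tilde v$, all zero; unisolvence of $A^0(T)$ then forces $\tilde v = 0$, so $v = w$ is rigid and $u = \defo v = 0$.

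The resolution claim then follows: $\ker\defo$ in $A^0(T)$ is the space of rigid motions (dim $3$); exactness at $A^1(T)$ combines $\sven\defo = 0$ with the second step of the unisolvence argument (which shows $\ker\sven \subset \defo\,A^0(T)$); and surjectivity of $\sven : A^1(T) \to A^2(T)$ follows by counting $\dim A^1(T) - \dim\ker\sven = 24 - 21 = 3 = \dim A^2(T)$. The subtlest point I anticipate is the upgrade $v \in \rmH^2 \Rightarrow v \in A^0(T)$, which must be carried out without invoking any prior discrete exactness at index $1$ (precisely what we are proving): the piecewise $\rmP^3$ regularity is a routine consequence of local surjectivity of $\defo$ on each $T_i$, while the $\rmC^1$-gluing across interior edges comes from the fact that a piecewise polynomial $\rmH^1$ field on a mesh is automatically continuous, applied to $\grad v$.
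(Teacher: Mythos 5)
Your proof is correct and follows essentially the same route as the paper's: componentwise Clough--Tocher unisolvence for $A^0(T)$, the barycenter argument for $A^2(T)$, a dimension lower bound of $24$ for $A^1(T)$ followed by unisolvence via $\sven u=0$, a potential $v$ upgraded to $A^0(T)$ by piecewise-polynomial regularity, correction by a rigid motion obtained from Lemma \ref{lem:kerneldof}, and transfer of the remaining DoFs of $u$ to the Clough--Tocher DoFs of $v-w$, with the final surjectivity of $\sven$ by dimension count. The only cosmetic differences are that you count $\dim A^1(T)$ starting from the fully discontinuous space $\rmP^2(\calR(T),\bbS)$ with a redundancy correction at the central vertex, where the paper starts directly from $\dim \rmC^0\rmP^2(\calR(T),\bbS)=30$, and that you spell out more explicitly than the paper why $\sven u=0$ (via the Stokes identity) and why $v\in\rmC^1\rmP^3(\calR(T),\bbV)$.
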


\begin{proof}
\tpoint For $A^0(T)$ the dimension count, and the unisolvence of the degrees of freedom are standard. We also remark that the degrees of freedom corresponding to an edge $E$ are unisolvent on $A^0(E)$. At vertices the corresponding result is trivial.

\tpoint For $A^2(T)$ the dimension count is trivial and the unisolvence of the DoFs is straightforward.

\tpoint The space $\rmC^0 \rmP^2(\calR(T), \bbS)$ has dimension $10 \times 3 = 30$. For $u \in \rmC^0 \rmP^2(\calR(T), \bbS)$, in order to impose that $\sven u$ is integrable we impose that $\partial_\nu u \tau \cdot \tau$ is continuous on the three interior edges. Since these fields are linear, this can be expressed as $6$ constraints. This shows that $\dim A^1(T) \geq 24$. 

\tpoint We now prove unisolvence of the degrees of freedom for $A^1(T)$. Let $u \in A^1(T)$ and suppose that the DoFs are all $0$. Then $\sven u \in A^2(T)$ has $0$ degrees of freedom, so it is $0$. So we can choose $v \in \rmH^2(T, \bbV)$ so that $\defo v = u$. The second order derivatives of $v$ can be recovered from the first order derivatives of $u$, and therefore turn out to be linear. Therefore $v \in A^0(T)$.

Since the $M(E)$-dofs of $\defo v$ are $0$, there exists a rigid motion $w$, which has the same $M(V)$-dofs as $w$ (ie value and value of $\curl$), by Lemma \ref{lem:kerneldof}. We notice that $\defo(v-w) = u$ and proceed to show that $v-w = 0$, by showing that its degrees of freedom, as defined in $A^0(T)$, are $0$.

We have that $v-w$ is $0$ at vertices and that $\grad (v-w)$ is $0$ at vertices (the symmetric part is $\defo v = u$ and antisymmetric parts is essentially $\curl (v - w)$).

It remains to prove that the integral of the normal derivative of $v -w$ on edges is $0$: 

-- We have that $\int_E \partial_\nu (v-w) \cdot \nu = 0$ since this is one of the DOFs of $u$.

-- We have that $\int_E \partial_\tau (v-w) \cdot \nu =0$  by integration of a derivative. Since $\int_E u \tau \cdot \nu = 0$ it follows that $\int_E \partial_\nu (v-w) \cdot \tau =0$.

\tpoint This shows that $\dim A^1 (T) = 24$. We have also showed that the sequence $A^\bs(T)$ is exact at index $1$. It follows that the range of $\sven$ on $A^1(T)$ has dimension $\dim A^1(T) - \dim A^0(T) + \dim RM = 24 -24 + 3 = 3$. Therefore $\sven :  A^1(T) \to A^2(T)$ is surjective.
\end{proof}

\begin{remark} The preceding proof of unisolvence for $A^1(T)$ was written from the point of view of Definition \ref{def:festrain}. From the point of view of FES, as in the extended Definition \ref{def:fesstrain}, one would go via Proposition \ref{prop:suffunisolve}, with similar arguments. That way yields the additional important information that degrees of freedom attached to edges are unisolvent on the space of restrictions to the edge, which guarantees the appropriate global continuity of the finite element fields defined piecewise by their DoFs.
\end{remark}

\begin{remark}
Discrete diagram chase for the finite element strain complex with high regularity. See Figure \ref{fig:diagstrainhigh}.
\end{remark}

\begin{figure}
\includegraphics[width= \textwidth]{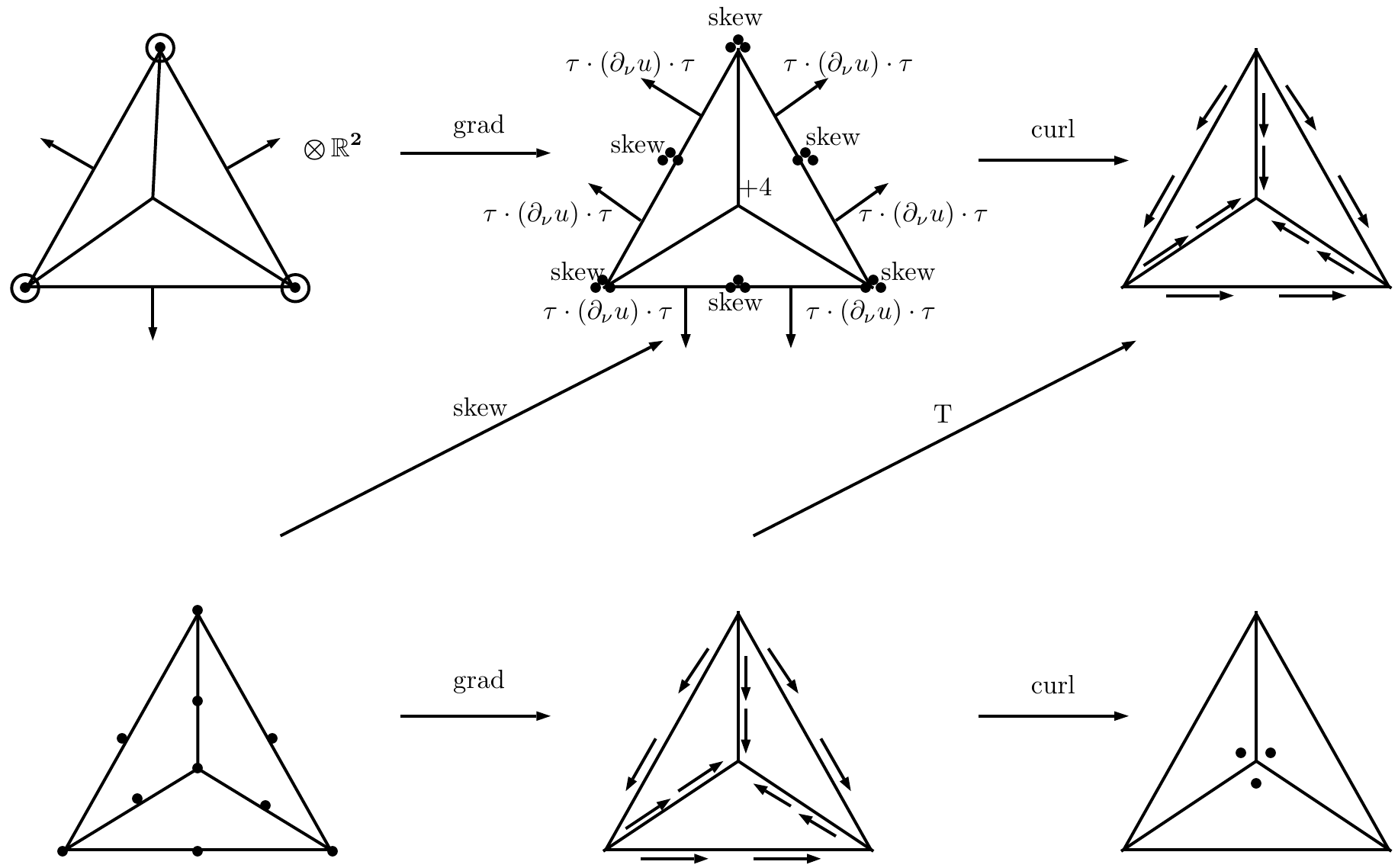}
\caption{Diagram chase for the strain element with high regularity. \label{fig:diagstrainhigh}}
\end{figure}

In the next section we will use the following consequence of Theorem \ref{theo:straincomplex}:
\begin{proposition}\label{prop:wt}
For any $v \in A^2(T)$ there is a unique $u \in A^1(T)$ such that $\sven u = v$, the restriction of $u$ to $\partial T$ is zero and $\partial_\nu u\tau \cdot \tau$ is constant on each edge.
\end{proposition}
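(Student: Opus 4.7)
The plan is a dimension-matching argument. By Theorem~\ref{theo:straincomplex}, $\dim A^1(T) = 24$, and the constraints in the statement amount to exactly $24$ scalar conditions: the equation $\sven u = v$ contributes $3$ (since $\dim A^2(T) = 3$); the pointwise vanishing of $u|_{\partial T}$, a continuous $\bbS$-valued field on $\partial T$ that is piecewise $\rmP^2$, contributes $3\cdot 9 - 3\cdot 3 = 18$ conditions after accounting for vertex compatibility; and the vanishing of the linear part of $\partial_\nu u\tau\cdot\tau \in \rmP^1(E)$ on each of the three edges contributes $3$ more. Hence the linear map $\Phi : A^1(T) \to A^2(T) \oplus B$ sending $u$ to $(\sven u, u|_{\partial T}, \text{linear part of } \partial_\nu u\tau\cdot\tau \text{ on each edge})$ is between $24$-dimensional spaces, and it suffices to prove that $\Phi$ is injective.

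Suppose $u \in \ker \Phi$. Since $\sven u = 0$, exactness of $A^\bs(T)$ at index~$1$ (Theorem~\ref{theo:straincomplex}) yields $u = \defo w$ for some $w \in A^0(T)$, and the task is reduced to showing that $w$ is a rigid motion. Componentwise on each edge $E$ with tangent $\tau$ and normal $\nu$, the boundary conditions on $\defo w$ become: $w\cdot\tau$ is constant on $E$; $\partial_\nu w\cdot\nu = 0$ on $E$; $\partial_\tau(w\cdot\nu) + \partial_\nu w\cdot\tau = 0$ on $E$; and $\partial_\tau^2(w\cdot\nu)$ is constant on $E$, i.e., $w\cdot\nu|_E \in \rmP^2(E)$. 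At each vertex $V$, the condition $\defo w(V) = 0$ forces $\grad w(V)$ to be skew-symmetric, so the only free vertex data are $w(V) \in \bbV$ and the scalar $\theta(V) := \tfrac{1}{2}\curl w(V)$, totaling $9$ parameters over the three vertices.

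Exploiting that $w|_E$ is a Hermite cubic determined by vertex values and tangential derivatives, and that $\partial_\nu w|_E$ is a quadratic determined by endpoint values and its edge integral, the edge conditions reduce (per edge $E$ with endpoints $V_1, V_2$ and length $|E|$) to four scalar relations: (i) $w(V_1)\cdot\tau = w(V_2)\cdot\tau$; (ii) $\int_E \partial_\nu w\cdot\nu = 0$; (iii) $\int_E \partial_\nu w\cdot\tau = (w(V_1)-w(V_2))\cdot\nu$; (iv) $\int_E \partial_\nu w\cdot\tau = -\tfrac{|E|}{2}(\theta(V_1)+\theta(V_2))$. Items (ii) and (iii) determine the two edge-integral DoFs of $w$ from vertex data, while (i) and the consistency of (iii) with (iv) give two scalar relations per edge on the $9$-dimensional vertex data.

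Finally, rigid motions $w(x) = a + \theta x^\rota$ form a $3$-dimensional subspace of $A^0(T)$ and manifestly satisfy all the above relations (direct substitution, using that $(V_i - V_j)^\rota$ is orthogonal to $\tau_{ij}$). The total count of constraints is $9 + 9 + 3 = 21$ out of $\dim A^0(T) = 24$, leaving a kernel of dimension at least $3$; since rigid motions already occupy those $3$ dimensions, equality holds as soon as the constraints are linearly independent. The main obstacle is verifying this independence, which I would handle by fixing coordinates on the (non-degenerate) triangle and computing the rank of the $6$ vertex relations from (i) and (iii)--(iv) on the $9$-dimensional vertex data directly, in the same spirit as the end of the proof of Theorem~\ref{theo:straincomplex}. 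Once this is done, $w$ is a rigid motion, $u = \defo w = 0$, and $\Phi$ is injective, which establishes the proposition.
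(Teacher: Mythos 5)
Your overall strategy (a $24=24$ dimension count reducing everything to injectivity of the combined map $\Phi$) is legitimate, and your bookkeeping of the codomain dimension ($3+18+3$) is correct; the translation of the edge conditions on $u=\defo w$ into Hermite data is also essentially right. The problem is that the argument stops exactly where the real content lies: after passing to the potential $w$, you are left with showing that six scalar relations on the nine-dimensional vertex data $(w(V),\theta(V))_V$ have precisely the rigid motions as kernel, and you explicitly defer this to an unperformed rank computation (``I would handle\dots by computing the rank\dots directly''). That rank statement \emph{is} the uniqueness assertion of the proposition, so nothing has actually been proved. Note also that it must hold for every nondegenerate triangle; since rigid motions and $\defo$ are not affine-invariant, you cannot normalize to a single reference triangle, which makes the deferred computation less innocent than it looks.

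The paper's own proof is much shorter and bypasses the potential $w$ entirely. The three conditions in the statement say exactly that the FES-restriction of $u$ to each edge is $(0,0,0,c_E)\in A^1(E)$ for some constant $c_E$; this boundary data is compatible at vertices, so by flabbiness it extends to an element of $A^1(T)$, which is unique because $F^1(T)=0$ (all degrees of freedom of $A^1(T)$ sit on $\partial T$). The Stokes identity then reads $\int_T \sven u\,\phi=-\sum_E c_E\int_E\phi$ for affine $\phi$, and since the three functionals $\phi\mapsto\int_E\phi$ are linearly independent on the $3$-dimensional space of affine functions, the map $(c_E)_E\mapsto\sven u$ is a bijection from $\bbR^3$ onto $A^2(T)$, giving existence and uniqueness at once. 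If you wish to keep your architecture, the same identity closes your gap without any Hermite analysis: for $u\in\ker\Phi$ the edge restrictions are $(0,0,0,c_E)$ and $\sven u=0$, so $\sum_E c_E\int_E\phi=0$ for all affine $\phi$, hence every $c_E=0$, hence all degrees of freedom of $u$ vanish and $u=0$ by the unisolvence already established in Theorem \ref{theo:straincomplex}.
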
 
\begin{proof}
Indeed if we choose a constant $c_E$ for each edge $E$, the data $(0,0,0,c_E) \in A^1(E)$ is compatible at vertices, so can we extended to an element $u$ of $A^1(T)$, which is unique since there are no interior degrees of freedom. The Stokes identity then takes the form, for any affine $\phi$ on $T$:
\begin{equation}
\int \sven u \phi = - \sum_E c_E \int_E \phi.
\end{equation}
For any desired $v= \sven u \in A^2(T)$, this uniquely determines the coefficients $c_E$ of $u$.
\end{proof}

\begin{remark}[Minimal finite element strain complex with high regularity] One can get a minimal complex as follows.

We start with the modified reduced Clough Tocher space for $A^0(T)$. Recall that one usually requires the normal derivative on edges to be affine. Instead we take the subspace of vectorfields $u$ such that $\defo u$ applied to the normal vector on edges is affine. The degrees of freedom are just vertex values and vertex values of the gradient.

For $A^1(T)$ one takes the sum of the space $\defo A^0(T)$ and the space defined in the preceding proposition, so that normal components on edges are affine. The degrees of freedom consisting of vertex values and pairing with $M(E)$ for each edge $E$ are then unisolvent. Then $A^1(T) = \rmP^2(E) \times P^1(E) \times P^1(E) \times \rmP^1(E)$.

The space $A^2(T)$ is unchanged.

The canonical DoFs give interpolators that commute with the differential operators.
\end{remark}

\subsection{Discrete spaces: lower regularity}

In the following, for each edge $E$ we let $\chi_E$ be a nonzero affine map $E \to \bbR$ such that $ \int \chi_E = 0$. Notice that if $u : E \to \bbR$ is affine and $\int_E u \chi_E = 0$ then $u$ is constant.

\begin{definition}[FE for the strain complex: low regularity]\label{def:festrainlow}
\mbox{}
\begin{itemize}
\item $A^0(T) = \rmC^0_{\curl \transp} \rmP^2 (\calR(T), \bbV)$. The degrees of freedom are:\\ 
-- at vertices, pairings of restrictions to vertices with $M(V)$,
in other words, vertex values ($3 \times 2$) and vertex values of the curl ($3 \times 1$),\\
-- at edges, $u \mapsto \int_E \defo u \tau \cdot \tau \chi_E$ and $ u \mapsto \int_E \defo u \tau \cdot \nu \chi_E$ ($3 \times 2$).
\item $A^1(T) = \defo A^0(T) \oplus W(T)$, where $W(T)$ is the space defined in Proposition \ref{prop:wt}. The degrees of freedom are, for each edge $E$, pairings of restrictions with $M(E)$ ($3 \times 3$), and $u \mapsto \int_E u \tau \cdot \tau \chi_E$ and $u \mapsto \int_E u \tau \cdot \nu \chi_E$ ($3 \times 2$).
\item $A^2(T)  =  \rmP^0(\calR(T), \bbR)$, the space of piecewise constants. The degrees of freedom are pairings with $M(T)$, namely integration against affine functions (3).
\end{itemize}

\end{definition}

The finite element spaces of Definition \ref{def:festrainlow} are represented in Figure \ref{fig:strainlow}.
\begin{figure}
\includegraphics[width= \textwidth]{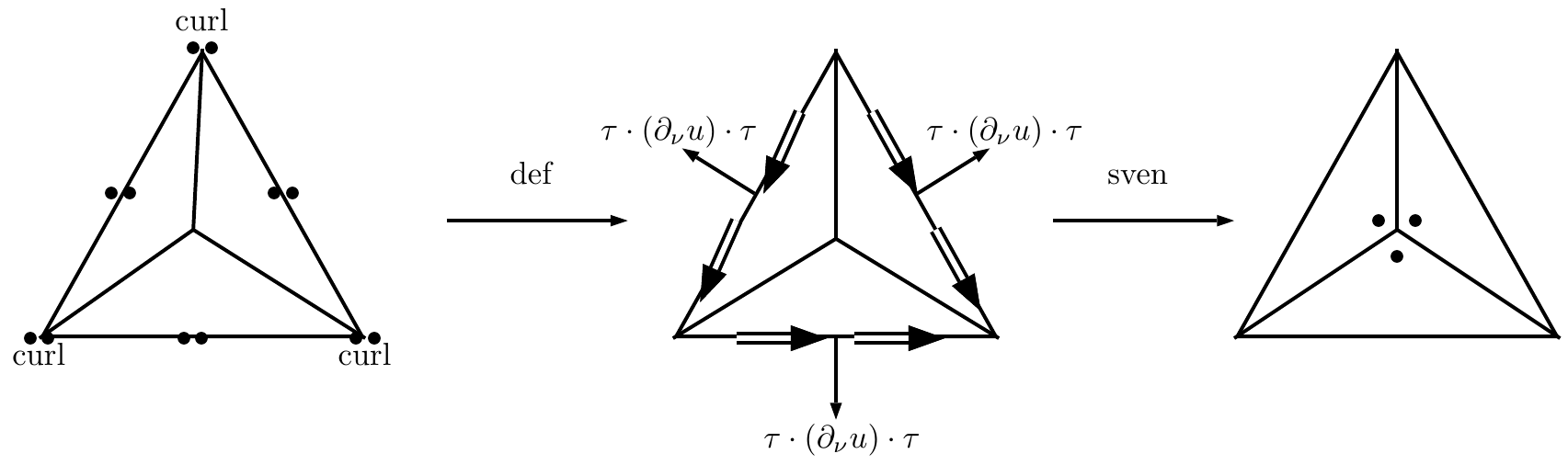}
\caption{Strain complex with low continuity. \label{fig:strainlow}}
\end{figure}

\begin{theorem} \mbox{}
\begin{itemize}
\item $A^0(T)$ has dimension 15, and the provided degrees of freedom are unisolvent.
\item $A^1(T)$ has dimension 15, and the provided degrees of freedom are unisolvent.
\item $A^2(T)$ has dimension 3, and the provided degrees of freedom are unisolvent.
\item The sequence $A^\bs(T)$ resolves the rigid motions.
\end{itemize}
\end{theorem}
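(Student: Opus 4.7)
My plan is to follow the structure of the proof of Theorem \ref{theo:straincomplex} for the high-regularity case, adjusting for the weaker continuity of the spaces and for the explicit decomposition $A^1(T) = \defo A^0(T) \oplus W(T)$.

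First I would handle $A^0(T)$. The ambient space $\rmC^0 \rmP^2(\calR(T), \bbV)$ has dimension $20$. The kernel of $\curl$ on it consists of gradients of scalar $\rmC^1\rmP^3$ functions on $\calR(T)$, of dimension $11$ (the Clough--Tocher dimension $12$ minus the constants). Since $\curl$ surjects onto piecewise-$\rmP^1$ scalars on $\calR(T)$ (dimension $9$), requiring that $\curl u$ lie in $\rmC^0 \rmP^1(\calR(T))$ (dimension $4$) yields $\dim A^0(T) = 11 + 4 = 15$. For unisolvence, the vertex DoFs annihilate $u$ and $\curl u$ at the three corners; the moment $\int_E \defo u\,\tau\cdot\tau\,\chi_E$ on each boundary edge kills the tangential bubble of $u|_E$ (integration by parts against the constant $\partial_\tau \chi_E$); the moment of $\defo u\,\tau \cdot \nu$ against $\chi_E$, together with continuity of $\curl u$ across interior edges, then pins down the normal bubble on each edge and the values at the central Clough--Tocher vertex and interior edge midpoints.

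The space $A^2(T)$ is handled verbatim as in Theorem \ref{theo:straincomplex}. The main step is $A^1(T)$. I would first verify that the decomposition $\defo A^0(T) \oplus W(T)$ is direct: if $\defo v = w \in W(T)$ with $v\in A^0(T)$, then $\defo v|_{\partial T}=0$ forces $v|_{\partial T}$ to be a rigid motion, and the regularity constraints on $v$ then force $v$ to be globally rigid, giving $\defo v = 0$. This yields $\dim A^1(T) = 12 + 3 = 15$. For unisolvence, let $u = \defo v + w$ with all DoFs of $u$ vanishing. The $\chi_E$-moments of $u\tau\cdot\tau$ and $u\tau\cdot\nu$ see only $\defo v$ (since $w|_{\partial T}=0$) and reproduce the corresponding edge DoFs of $v$ in $A^0(T)$; the $M(E)$-pairings of $u$, via Lemma \ref{lem:kerneldof} and the Stokes identity for the strain complex shown in Figure \ref{fig:straindiagramlow}, descend to vertex-level data equivalent to the $M(V)$-pairings of $v$. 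The unisolvence of $A^0(T)$ then forces $v$ to be rigid, so $\defo v = 0$ and $u = w$. Finally, the $M(T)$-pairings of $u$ compute $\sven w$ (integration against affine test functions), which therefore vanishes, and uniqueness in Proposition \ref{prop:wt} yields $w = 0$.

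For the resolution property, the rigid motions inject into $A^0(T)$ as polynomials of degree $\leq 1$ with trivial continuous curl, and they coincide with $\ker \defo$ on $A^0(T)$. The relation $\sven \defo = 0$ holds pointwise. Exactness at $A^1(T)$ follows by writing any $u \in A^1(T)$ with $\sven u = 0$ as $\defo v + w$ and noting $\sven w = 0$ forces $w = 0$ by Proposition \ref{prop:wt}. Finally $\sven : A^1(T) \to A^2(T)$ is surjective since already $\sven W(T) = A^2(T)$ by the same proposition. The main obstacle I anticipate is the unisolvence step for $A^1(T)$: verifying that the edge DoFs, despite involving normal derivatives that couple to the interior of $T$, genuinely separate the $\defo v$ and $W(T)$ components of $u$; this is where the interplay between the $\chi_E$-moments (which factor through $w|_{\partial T}=0$) and the $M(E)$-pairings (which couple to interior data) must be carefully bookkept. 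A secondary subtlety is ensuring that the continuity constraints on $\curl u$ across the three interior edges of $\calR(T)$ — which meet at the central vertex — impose exactly the expected five independent conditions and not six.
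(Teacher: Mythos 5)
Your overall strategy (dimension counts, unisolvence via the decomposition $A^1(T) = \defo A^0(T) \oplus W(T)$, resolution from Proposition \ref{prop:wt}) matches the paper's, and the $A^0(T)$ and $A^2(T)$ parts and the resolution argument are fine. But there is a genuine gap in your unisolvence argument for $A^1(T)$, caused by the order in which you eliminate the two components. You claim that, because ``$w|_{\partial T}=0$'', the $M(E)$-pairings of $u=\defo v + w$ see only $\defo v$ and hence force the $M(V)$-cochain of $v$ to be closed. That is false: an element $w\in W(T)$ has vanishing pointwise trace components $w\tau\cdot\tau$ and $w\tau\cdot\nu$ on $\partial T$, but its third restriction component $\partial_\nu w\,\tau\cdot\tau = c_E$ is a (generally nonzero) constant on each edge, and this component enters the $M(E)$-pairing through the term $-\int_E u'\phi$ in $\langle (u,v,u'),(\phi,\psi)\rangle_E$. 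So the vanishing of the $M(E)$-DoFs of $u$ only tells you that the coboundary of the $M(V)$-cochain of $v$ equals $(c_E\int_E\phi)_E$, not that it is zero; you cannot invoke Lemma \ref{lem:kerneldof} and the unisolvence of $A^0(T)$ to kill $v$ before you know $c_E=0$. (Your $\chi_E$-moment step is unaffected, since those moments involve only $u\tau\cdot\tau$ and $u\tau\cdot\nu$.)

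The repair is simply to reverse the order, which is what the paper does: sum the edge-level Stokes identities into the triangle-level one, $\int_T \sven u\,\phi = \sum_E\langle \rest_E u,\rest_E\phi\rangle_E = 0$ for all affine $\phi$, so $\sven u = 0$ since integration against affines is unisolvent on $A^2(T)$; then $\sven w = \sven u - \sven\defo v = 0$ forces $w=0$ by the uniqueness in Proposition \ref{prop:wt}; only now is $u=\defo v$ and your argument for killing $v$ goes through. The same observation ($\sven\circ\defo=0$ together with the injectivity of $\sven$ on $W(T)$) also gives the directness of the sum $\defo A^0(T)\oplus W(T)$ immediately; your argument for directness via ``$v|_{\partial T}$ rigid $\Rightarrow$ $v$ globally rigid'' is not justified as stated and is unnecessary.
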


\begin{proof}
\tpoint We introduced this space in \cite{ChrHu18} (Proposition 3), but there we had different edge degrees of freedom (in particular we had $u \mapsto \int_E u \cdot \tau$). In any case this gives the dimension. We now check unisolvence. Let $u \in A^0(T)$ have $0$ DoFs. In particular $u$ is $0$ at vertices as well as $\curl u$. Given preceding results it remains to be proved that $u$ is $0$ on each edge.

-- We have that $\defo u \tau \cdot \tau = \partial_\tau (u \cdot \tau)$ is affine and orthogonal to $\chi_E$ hence constant. So $u\cdot \tau$ is linear on each edge, hence $0$.
   
-- 
We have that $\defo u \tau \cdot \nu = (1/2) (\partial_\tau (u \cdot \nu) + \partial_\tau (u \cdot \nu)) $ affine and orthogonal to $\chi_E$, hence constant. Also $\partial_\tau (u \cdot \nu) - \partial_\tau (u \cdot \nu) = \curl \transp u = 0$ on $E$, so $\partial_\tau (u \cdot \nu)$ is constant so $u \cdot \nu = 0$ on each edge.  

\tpoint The dimension of $A^1(T)$ is (15 - 3) + 3 = 15 by construction.

\tpoint Unisolvence on $A^1(T)$. Choose $u \in A^1(T)$ with $0$ degrees of freedom. We then get $\sven u = 0$ from the Stokes identity, since the $M(E)$ DoFs are zero. So we may choose $v \in A^0(T)$ such that $u = \defo v$. We may find a rigid motion such that $v$ and $w$ have the same $M(V)$ degrees of freedom. Then $\defo (v-w) = u$ moreover the vertex degrees of freedom of $v-w$ are $0$. The remaining edge degrees of freedom of $v-w$, are edge degrees of freedom of $u$, hence $0$, so $v-w  = 0$. Hence $u = 0$.

\end{proof}

\begin{proposition}
We get a compatible FES by appending the spaces:
\begin{align}
A^0(E) & = \rmP^2(E) \times \rmP^1(E),\\
A^1(E) & = \rmP^1(E) \times \rmP^1(E) \times \rmP^0(E),\\
A^0(V) & = \bbV \times \bbR.
\end{align}

The degrees of freedom are now described as:
\begin{align}
F^0(V) & = \{A^0(V) \ni (u,v) \mapsto u \cdot \phi + v \psi \ : \ (\phi, \psi) \in M(V) \},\\
F^0(E) & = \bbR \{A^1(E) \ni (u,v,u') \mapsto \ts \int_E \partial_\tau u \xi_E \} \oplus \\
       & \qquad \bbR \{A^1(E) \ni (u,v,u') \mapsto \ts \int_E\frac{1}{2}(u' + \partial_\tau v) \xi_E \},\\
F^1(E) & = \bbR \{A^1(E) \ni u \mapsto \langle u, \phi \rangle_E \ : \ \phi \in M^1(E) \} \oplus \\
       & \qquad \bbR \{A^1(E) \ni (u,v,u') \mapsto \ts \int u \chi_E \} \oplus\\
       & \qquad \bbR \{A^1(E) \ni (u,v,u') \mapsto \ts \int v \chi_E \},\\
F^2(T) & = \{A^2(T) \ni u \mapsto \ts \int_T u \phi \ : \ \phi \in M(T) \}.
\end{align}

\end{proposition}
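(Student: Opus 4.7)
The plan is to establish compatibility of the FES (flabbyness plus local exactness) and unisolvence of the listed degrees of freedom by combining the triangle-level results of the preceding theorem with direct verification on edges and vertices. The main tools are Theorem~\ref{theo:altcomp}, which characterises compatibility via the cohomology of the zero-boundary subcomplexes $A^\bullet_0(T')$, and Proposition~\ref{prop:suffunisolve}, which yields unisolvence and flabbyness jointly from an injectivity hypothesis plus a matching dimension count.

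First I would verify the FES axioms for the augmented data: the induced edge differential $(u,v,u') \mapsto (\partial_\tau u, \tfrac{1}{2}(u' + \partial_\tau v), \partial_\tau u')$ read off from Figure~\ref{fig:straindiagramlow} does map the prescribed $A^0(E)$ into $A^1(E)$ (a routine polynomial-degree check), the edge-to-vertex restriction lands in $A^0(V) = \bbV \times \bbR$, and commutation with the triangle differentials holds by construction of the induced operators. Next, I would apply Theorem~\ref{theo:altcomp} cellwise. At a vertex, the trivial sequence $A^0_0(V) = A^0(V) = \bbV \times \bbR$ evaluates isomorphically onto $L(V) = M(V)^\star$. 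At an edge, the two-term complex $A^0_0(E) \to A^1_0(E)$ obtained by imposing vanishing at the endpoints is a finite-dimensional computation: using the Stokes-like identity for the edge operator displayed above the statement, one checks that its only nontrivial cohomology lies in degree one and realises $L(E)$ via the evaluation map. At a triangle, the preceding theorem already gives exactness of $A^\bullet(T)$ above the rigid motions, and a standard long exact sequence argument (mirroring the induction step in the proof of Theorem~\ref{theo:altcomp}) promotes this to the required isomorphism $H^2(A^\bullet_0(T)) \cong L(T) = M(T)^\star$.

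Finally, to establish unisolvence of the proposed DoFs I would invoke Proposition~\ref{prop:suffunisolve}. The nontrivial injectivity checks are for $A^0_0(E)$ and $A^1_0(E)$: in each case an element with zero DoFs has vanishing endpoint data together with orthogonality to $\chi_E$, and in combination with the $M(E)$-pairings this forces it to vanish, since $\chi_E$ separates constants from higher-degree polynomials on $E$. The matching dimension counts can be read off from the dimensions of the triangle spaces established in the preceding theorem. The main obstacle I expect is not any single deep step but rather the careful bookkeeping: confirming that the polynomial degrees chosen for the edge spaces exactly accommodate the restrictions of the triangle spaces (so that the gluing $A^\bullet(\calT)$ enforces the intended continuity), and that the vertex DoFs, edge DoFs and triangle DoFs decouple cleanly along the filtration by cell dimension, so that the hypotheses of Proposition~\ref{prop:suffunisolve} genuinely apply at each level.
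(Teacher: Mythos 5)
Your proposal is correct and follows essentially the route the paper intends: its own proof is the one-line remark that ``unisolvence of the edge DoFs on the edge spaces can be checked by similar arguments,'' deferring the triangle-level content to the preceding theorem, and your write-up is a faithful expansion of exactly that plan via the general FES machinery (Theorem~\ref{theo:altcomp} for compatibility cell by cell and Proposition~\ref{prop:suffunisolve} for unisolvence plus flabbyness), which the paper itself endorses in the remark following Theorem~\ref{theo:straincomplex}. The only caveat is bookkeeping you already flag: when matching edge polynomial degrees to restrictions of the triangle spaces, note that the restriction data on an edge in the low-regularity diagram has three components $(u\cdot\tau,\,u\cdot\nu,\,\partial_\nu u\cdot\tau)$, so the displayed $A^0(E)$ should be read accordingly.
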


\begin{proof} 
Unisolvence of the edge DoFs on the edge spaces can be checked by similar arguments.
\end{proof}

\begin{remark}[Alternative definition of $W(T)$] One can replace the space $W(T)$ in Definition \ref{def:festrainlow} by a construction with the Poincar\'e - Koszul operators. Indeed let $x^\rota$ be the the identity vector field, rotated by $\pi/2$, with respect to an origin located at the central vertex of $\calR(T)$.  Consider the matrix field $\omega = x^\rota (x^\rota)^\transp$.

For any internal edge $E$, connecting the central vertex of $\calR(T)$ with one of the vertices of $T$, with tangent vector $\tau$ and normal vectror $\nu$, the matrix field $\omega$ has the property that, on the edge,  $\omega \tau = 0$ and $\partial_\nu \omega \tau \cdot \tau = 0$. See Lemma \ref{lem:kosreg}. Furthermore on any (external) edge $E$ of $T$, $\omega \tau \cdot \tau \in \rmP^0(E)$, $\omega \tau \cdot \nu \in \rmP^1(E)$ and $\partial_\nu \omega \tau \cdot \tau \in \rmP^0(E)$. Finally $\sven \omega$ is constant. 

We can define $W(T) = \{u \omega \ : u \in \rmP^0(\calR(T), \bbR) \}$. Indeed such $u \omega$ will have restriction $0$ to internal edges, so the $\sven$ can be computed classically and is proportional to $u$.
\end{remark}

\begin{remark}
Discrete diagram chase for the finite element strain complex with lower regularity. See Figure \ref{fig:diagstrainlow}.
\end{remark}

\begin{figure}
\includegraphics[width= \textwidth]{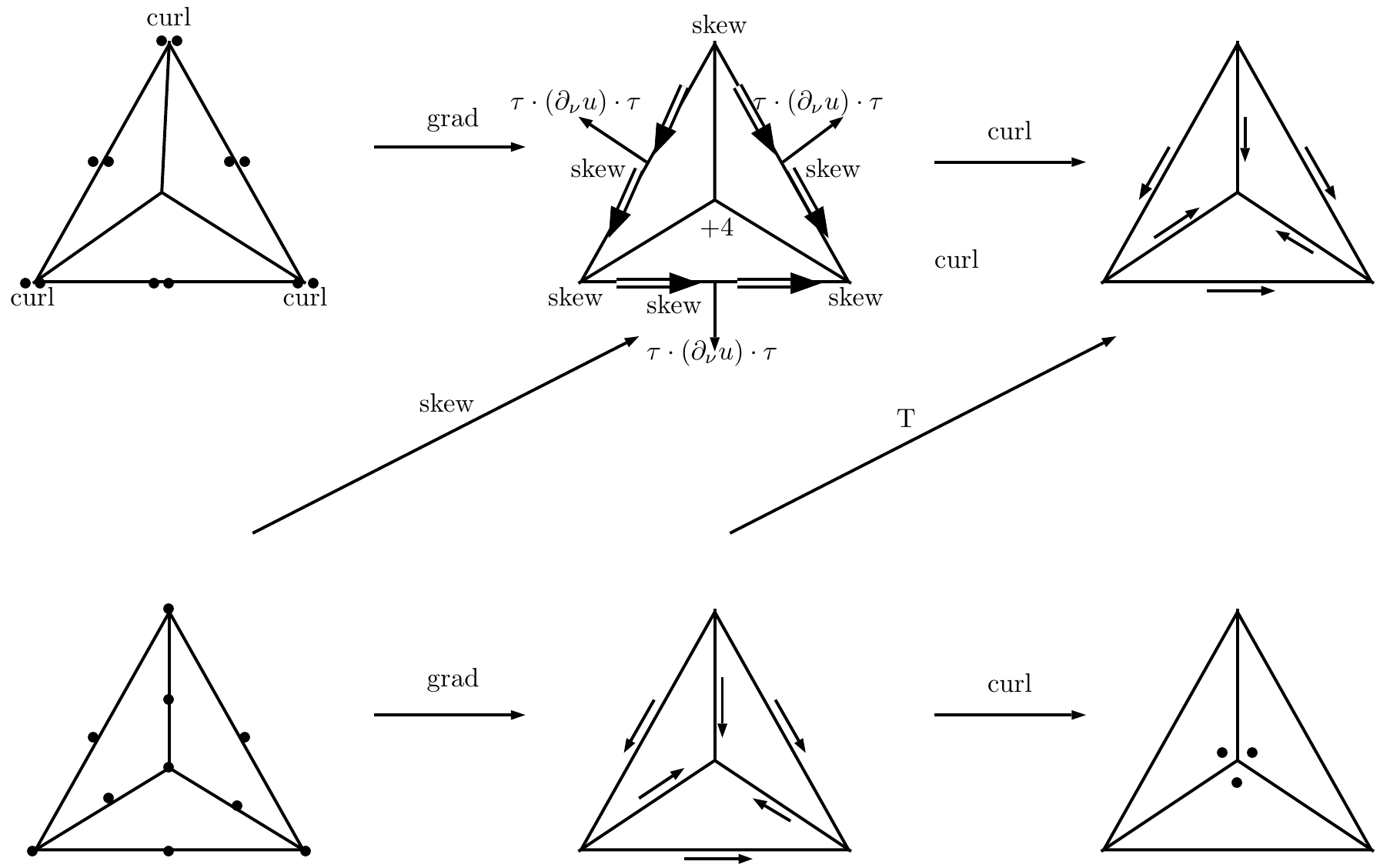}
\caption{Diagram chase for the strain complex with lower regularity. \label{fig:diagstrainlow}}
\end{figure}

\begin{remark}[minimal finite element strain complex with low regularity]
We get a minimal complex by imposing all DoFs of the type, on edges $E$, integral against $\chi_E$ to be zero. The dimensions are then $9$ for the vector fields, $9$ for the strain tensors and $3$ for the scalars.

Notice that then $A^1(E) = \rmP^0(E) \times \rmP^0(E) \times \rmP^0(E)$. We also have:
\begin{equation}
A^0(E) = \{(u,v,u')\in \rmP^1(E) \times \rmP^2(E) \times \rmP^1(E) \ : \ u' + \partial_\tau v \ \textrm{ is constant} \}.
\end{equation}
\end{remark}

\section*{Acknowledgements}
This paper puts together some ideas that have matured over a long time.

We are grateful to J\"org Frauendiener for interesting discussions on discrete connections and in particular Bianchi identities, a decade ago. Richard S. Falk provided us with reference \cite{ArnDouGup84}, which proved stimulating. Jean-Claude N\'ed\'elec, Douglas Arnold and Ragnar Winther have kindly shared their expertise, on mixed finite elements in general and elasticity complexes in particular. We also thank John Rognes, Geir Ellingsrud, Hans Munthe-Kaas and Jean Fran\c cois Pommaret, for their useful inputs related to de Rham theorems, vector bundles, inverse limits and Spencer sequences. Kristin Shaw recently pointed out possible connections with \cite{Cur14}.

Part of this work was carried out while SHC was supported by the European Research Council through the FP7-IDEAS-ERC Starting Grant scheme, project 278011 STUCCOFIELDS, including visits to ENS and IHP in Paris in 2015.

The research of KH leading to the results of this paper was partly carried out during his affiliation with the University of Oslo.  KH was then supported by the  European Research Council under the European Union's Seventh Framework Programme (FP7/2007-2013) / ERC grant agreement 339643 (FEEC-A).

\bibliography{../Bibliography/alexandria,../Bibliography/newalexandria,../Bibliography/mybibliography}{}
\bibliographystyle{plain}

\end{document}